\newtheorem{thm}{Theorem}[section]
\newtheorem{cor}[thm]{Corollary}
\newtheorem{prop}[thm]{Proposition}
\newtheorem{lemma}[thm]{Lemma}
\newtheorem{rmk}[thm]{Remark}
\newtheorem{note}[thm]{Notation}
\newtheorem{defin}[thm]{Definition}
\DeclareMathOperator*{\coker}{coker}
\DeclareMathOperator*{\Hom}{Hom}
\DeclareMathOperator*{\Ext}{Ext}
\newcommand{\RN}[1]{%
  \textup{\uppercase\expandafter{\romannumeral#1}}%
}
\newcommand {\C} {{\mathbb C}}
\newcommand {\Q} {{\mathbb Q}}
\begin{document}
\title[1-Motives Associated to the Limit MHS of Degenerations of Curves]{1-Motives Associated to the Limit Mixed Hodge Structures of Degenerations of Curves}
 
  \author{Feng Hao}

\address{Department of Mathematics\\
  Purdue University\\
  150 N. University Street\\
  West Lafayette, IN 47907\\
  U.S.A.\\fhao@purdue.edu.}

\begin{abstract} 
In this article, we will give the Deligne 1-motives up to isogeny corresponding to the $\mathbb{Q}$-limit mixed Hodge structures of semi-stable degenerations of curves, by using logarithmic structures and Steenbrink's  cohomological mixed Hodge complexes associated to semi-stable degenerations of curves.
\end{abstract}
\maketitle

\begin{center}
\section*{Introduction}
\end{center}

For a semi-stable degeneration of smooth projective curves, we have a limit mixed Hodge structure on the nearby fiber. The goal of this paper is to give a geometric construction of the 1-motive associated to the limit mixed Hodge structure.

Given a semi-stable degeneration of smooth projective curves $f: X\rightarrow \Delta$ over a small disk $\Delta$, with a chosen parameter $t:\Delta\rightarrow \mathbb{C}$. The central fiber of $f$, $X_0=f^{-1}(0)$, is a stable curve. Let $X^*=X-X_0$. Denote the inclusion $X_0\hookrightarrow X$ by $i$ and the inclusion $X^*\hookrightarrow X$ by $j$.
Then we have the following diagram
\[\tag{0.1}
\xymatrix{
X_{\infty}\ar@{}[dr] |(.3){\ulcorner}\ar[d]^-{f_{\infty}}\ar[r]^-e  &X^{*}\ar@{}[dr] |(.3){\ulcorner}\ar[d]^{f'}\ar@{^{(}->}[r]^j &X\ar[d]^f&X_0\ar@{}[dl] |(.3){\urcorner}\ar[d]\ar@{_{(}->}[l]_i \\
\mathfrak{h}\ar[r]^{\pi} &\Delta^*\ar@{^{(}->}[r] &\Delta &\{0\}\ar@{_{(}->}[l],\\
}
\]
where $\mathfrak{h}$ is the upper half plane, $\pi: \mathfrak{h}\rightarrow \Delta^*$ is the universal cover, and the squares are Cartesian. Denote the composition map $j\circ e$ by $k$. 

We call the fiber $X_{\infty}$ a \textit{nearby fiber}. It is homotopic to any fibre $X_s$ of $f$, for $s\neq0$. $X$ is homotopy equivalent to $X_0$ by a retraction $r: X\rightarrow X_0$.  We call the composition map $sp: X_s\overset{j}{\hookrightarrow} X\overset{r}{\rightarrow} X_0$ a \textit{specialization}.  \ Following from \cite{pest} and \cite{sga7}, we have the construction of a \textit{complex of nearby cocycles} $\Psi_f\underline{\mathbb{C}}_X:=i^*Rk_*k^*\underline{\mathbb{C}}_X$ (respectively, $\Psi_f\underline{\mathbb{Q}}_X:=i^*Rk_*k^*\underline{\mathbb{Q}}_X$) in the derived category $D^{+}(X_0, \mathbb{C})$ (respectively, $D^{+}(X_0, \mathbb{Q})$). According to Steenbrink \cite[lemma 4.3]{st1}, we have $$H^1(X_{\infty}, \mathbb{C})\cong \mathbb{H}^1(\Psi_f\underline{\mathbb{C}}_X)$$ (respectively, $H^1(X_{\infty}, \mathbb{Q})\cong \mathbb{H}^1(\Psi_f\underline{\mathbb{Q}}_X)$). Schmid \cite{sch} and Steenbrink \cite{st1} showed that the cohomology $H^1(X_{\infty}, \mathbb{C})$ admits a mixed Hodge structure, which is called the \textit{limit mixed Hodge structure}, as the ``limit'' of pure Hodge structures $H^1(X_s,\mathbb{C})$ of general fibers when $s$ approaches to $0$ in the disk $\Delta$.

For the construction of the limit mixed Hodge structure, note first that we have a de Rham complex over $X_0$ with logarithmic poles: 
$\Omega^{\bullet}_{X/\Delta}(\log X_0)\otimes_{\mathcal{O}_X}\mathcal{O}_{X_0}$, which is isomorphic to $\Psi_f\underline{\mathbb{C}}_X$ in the derived category $D^+(X_0, \mathbb{C})$. Steenbrink observed that the weight filtration of the limit mixed Hodge structure cannot be constructed on $\Omega^{\bullet}_{X/\Delta}(\log X_0)\otimes_{\mathcal{O}_X}\mathcal{O}_{X_0}$. To define the weight filtration of the limit mixed Hodge structure, Steenbrink constructed the so called \textit{Steenbrink double complex} $A_t^{{\bullet},{\bullet}}$, which  depends on the parameter $t: \Delta\rightarrow \mathbb{C}$ we chose.\ The Steenbrink double complex gives a resolution of $\Omega^{\bullet}_{X/\Delta}(\log X_0)\otimes_{\mathcal{O}_X}\mathcal{O}_{X_0}$, therefore there is an isomorphism $$\mathbb{H}^1(X_0, \Omega^{{\bullet}}_{X/\Delta}(\log X_0)\otimes_{\mathcal{O}_X}\mathcal{O}_{X_0})\cong \mathbb{H}^1(Tot(A_t^{{\bullet},{\bullet}})),$$ which depends on the choice of $t$. Over the Steenbrink double complex, the weight filtration and Hodge filtration can be defined, which give us the limit mixed Hodge structure after taking the hypercohomology. We denote the limit mixed Hodge structure by $H^1_{lim}(f,t)$ for the family $f$ and parameter $t$. We will recall the Steenbrink double complex in section \ref{double complex}.

On the other hand, Deligne \cite{de1} has built up an equivalence of two categories
\[
\left\{
\begin{array}{ll}
\text{\textit{$\mathbb{Z}$-mixed Hodge structure}}\\ (H_{\mathbb{Z}}, W_{\bullet}, F^{\bullet})
 \text{ of type}\ 
\{(-1,-1),\\
(-1,0), (0,-1), (0,0)\}\ \   \text{with}\\  
Gr^W_{-1}H\ \text{a polarized pure Hodge}\\ \text{structure of weight} -1.
\end{array}
\right\}
\Longleftrightarrow
\left\{
\begin{array}{ll}
\text{\textit{1-motive}}\ [L\overset{\mu}{\rightarrow} G], \ \text{with}\  L\ \text{a}\\ \text{free abelian group of finite rank},\\ \text{and}\  0\rightarrow
T\rightarrow G\rightarrow A\rightarrow 0\   
\text{an}\\ \text{extension of an abelian variety}\\ $A$\  \text{by a torus}\  T.
\end{array}
\right\}
 \]

We will briefly recall the above equivalence of categories in section \ref{abstract 1-motive}.  

For elementary geometric examples of the above equivalence of categories, we consider a smooth curve $X$ with $\overline{X}$ the smooth projective curve containing $X$ as an open subset. $H^1(X, \mathbb{C})(1)$ actually carries a $\mathbb{Z}$-mixed Hodge structure of type $\{(-1,0),(0,-1),(0,0)\}$, where ``$(1)$'' is the Tate twist. In the corresponding 1-motive $[L\rightarrow J(\overline{X})]$, $J(\overline{X})$ is the Jacobian variety of $\overline{X}$. $L$ is the free abelian group of rank $n-1$ with generators $\{p_1-p_0 ,..., p_{n-1}-p_0\}$, where the set of points $\{p_0, p_1,...,p_{n-1}\}$ is $\overline{X}-X$. The 1-motive map $\mu$ is actually the Abel-Jacobi map. For the algebraic construction of the 1-motive associated to a general curve, refer to Deligne \cite[section 10.3]{de1}. For the algebraic construction of 1-motives associated to the first cohomology of surfaces, refer to Carlson \cite{carl1}.

Go back to the limit mixed Hodge structure of a semi-stable degeneration of curves $f: X\rightarrow \Delta$. Fix a parameter $t: \Delta \rightarrow \mathbb{C}$.  
Steenbrink \cite{st1} constructed an abstract bifiltered $\mathbb{Q}$-cohomological mixed Hodge complex $((A_{t,\mathbb{Q}}^{\bullet}, W_{\bullet}), (A_t^{\bullet}, W_{\bullet}, F^{\bullet}))$, depending on $t$, which gives a $\mathbb{Q}$-mixed Hodge structure over $\mathbb{H}^1(Tot(A_t^{{\bullet},{\bullet}}))$. Later on, Steenbrink \cite{st2} constructed the $\mathbb{Q}$-mixed Hodge structure over $\mathbb{H}^1(Tot(A_t^{{\bullet},{\bullet}}))$ using log geometry. By using the monodromy weight spectral sequence, it is not hard to show that the $\mathbb{Q}$-limit mixed hodge structure $\mathbb{H}^1(Tot(A_t^{{\bullet},{\bullet}}))(1)$ is of type $\{(-1,-1),(-1,0),(0,-1),(0,0)\}$ (\cite[proposition 4.21]{st1}). Here we also denote it to be $H^1_{lim}(f,t)$. Abstractly, $H^1_{lim}(f,t)$ corresponds to a Deligne 1-motive $[L\overset{\mu_t}{\rightarrow} G]$ up to isogeny, by the correspondence of the two categories described above. Actually $G$ is $Pic^0(X_0)$ and $L$ is given by nodal points in $X_0$, which will be shown in section \ref{abs}. The main purpose of this article is to give an explicit geometric description of the 1-motive map $\mu_t$, which will be discussed in section \ref{geo 1}, \ref{geo 2}, and \ref{geo 3}. For the purpose of calculation, we will modify the construction of $\mathbb{Q}$-cohomological mixed Hodge complex constructed by Steenbrink through log geometry in Steenbrink \cite[section 5]{st2} and \cite[section 11.2.6]{pest}, to give a new $\mathbb{Q}$-cohomological mixed Hodge complex. We will use this new $\mathbb{Q}$-cohomological mixed Hodge complex to compute the 1-motive. Our main result is the following theorem.

\begin{thm}
The 1-motive corresponding to the $\Q$-limit mixed Hodge structure $H^1_{lim}(f,t)$ of the family $f$ and the parameter $t:\Delta\rightarrow \mathbb{C}$ is isogeny to the following homomorphism:
$$\nu_t: L\rightarrow Pic^0(X_0),$$ where

(1) $L\cong\ker\{H_0(X_0[2], \mathbb{Z})\rightarrow H_0(X_0[1], \mathbb{Z})\}$, where $X_0[1]$ is the normalization of $X_0$, and $X_0[2]$ is the set of nodal points of $X_0$;
    
(2) For $D=\underset{p\in X_0[2]}{\Sigma}n_p p\in L$,  the image $\nu_{t}(D)$ is the line bundle glued from the line bundle $$\mathcal{O}_{X_0[1]}(\underset{p\in X_0[2]}{\Sigma}n_p (p'-p''))$$ where $\{p', p''\}$ is the preimage of $p$ in $X_0[1]$, together with the following gluing data along each node:

For any node $p\in X_0$, choose

(1) an open analytic neighborhood $V_p\subset X$ of $p$;
    
(2) local coordinates $u, v$ over $V_p$, such that $uv=t\circ f$. Note that $\{u=0\}$ and $\{v=0\}$ are the two components of $V_p\cap X_{0}$. We regard $u|_{\{v=0\}}$ and $v|_{\{u=0\}}$ as local coordinate functions at $p'$ and $p''$.

Then the gluing isomorphism at each node $p$  is given by the following diagram
\[\xymatrixrowsep{0.3 in}
\xymatrixcolsep{0.2 in}
\xymatrix{
\mathcal{L}(D)\otimes(\mathcal{O}_{X_0[1],p'}/m_{p'})\ar[d]_-{u^{n_p}}^{\cong}\ar[r]&\mathcal{L}(D)\otimes(\mathcal{O}_{X_0[1],p''}/m_{p''})\ar[d]^-{(1/v)^{n_p}}_{\cong}\\
\mathcal{O}_{X_0[1],p'}/m_{p'}\ar[d]^{\cong}&\mathcal{O}_{X_0[1],p''}/m_{p''}\ar[d]^{\cong}\\
\mathbb{C}\ar@{=}[r]&\mathbb{C}.\\
}
\]
\end{thm}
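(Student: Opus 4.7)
The plan is to convert the problem into an extension class computation via Deligne's equivalence between $\Q$-1-motives up to isogeny and $\Q$-mixed Hodge structures of type $\{(-1,-1),(-1,0),(0,-1),(0,0)\}$. For any such MHS $H$, one has $L = \mathrm{Gr}^W_0 H$, the semi-abelian part $G$ is assembled from $W_{-1}H$, and the 1-motive map sends a generator $\ell \in L$ to the class of the pullback of the extension
$$0 \to W_{-1} H \to W_0 H \to \mathrm{Gr}^W_0 H \to 0$$
along $\Q\cdot \ell \hookrightarrow L$, viewed as an element of
$$\Ext^1_{\mathrm{MHS}}\bigl(\Q,\, W_{-1}H\bigr) \;\cong\; \frac{W_{-1}H_{\C}}{F^0 W_{-1}H_{\C} + W_{-1}H_{\Q}} \;=\; G.$$
Applied to $H = H^1_{lim}(f,t)(1)$, this identifies $G$ with $\mathrm{Pic}^0(X_0)$ through the exponential sequence on $X_0$.

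\medskip

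I would first use Steenbrink's monodromy weight spectral sequence on the modified $\Q$-cohomological mixed Hodge complex to identify the graded pieces: $\mathrm{Gr}^W_0 H^1_{lim}(f,t)(1)$ with $\ker\{H_0(X_0[2],\Q) \to H_0(X_0[1],\Q)\}$, and $\mathrm{Gr}^W_{-1}$ with $H^1(X_0[1],\Q)(1)$, whose Jacobian is $J(X_0[1])$. Together with the toric piece coming from $\mathrm{Gr}^W_{-2}$, the semi-abelian piece $W_{-1}$ assembles precisely into $\mathrm{Pic}^0(X_0)$; this is the content of Section \ref{abs}.

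\medskip

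Given a divisor $D = \sum_p n_p p \in L$, the computation of $\nu_t(D)$ then amounts to producing two explicit lifts in the total complex of the modified double complex: a $\Q$-rational lift $\widetilde D_\Q$, realized topologically by a weighted sum of small loops around the nodes, and an $F^0$-lift $\widetilde D_F$, realized by a logarithmic differential whose residues along the branches at $p$ are $+n_p$ at $p'$ and $-n_p$ at $p''$. The difference $\widetilde D_F - \widetilde D_\Q$ lies in $W_{-1} H^1_{lim}(f,t)(1)_\C$, and unwinding its image in $G$ through the exponential sequence realizes this class as a line bundle on $X_0$ presented via its normalization. The divisorial contribution on each component is visibly $\mathcal{O}_{X_0[1]}(\sum n_p(p' - p''))$, and the remaining information is exactly the Čech gluing data at the nodes.

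\medskip

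The principal technical obstacle is identifying this transition data with the one stated, and this is where the parameter $t$ enters decisively. Locally around a node $p$, the coordinates $u,v$ with $uv = t \circ f$ provide the canonical choice: the logarithmic form representing $\widetilde D_F$ can be normalized to $n_p \, du/u$ on the branch $\{v=0\}$ and $-n_p\, dv/v$ on $\{u=0\}$, whose discrepancy is exact thanks to $d\log u + d\log v = d\log(t \circ f)$. Exponentiating these local primitives produces the trivializations $u^{n_p}$ and $(1/v)^{n_p}$ appearing in the statement. Verifying that this exponentiation correctly implements the isomorphism $\Ext^1_{\mathrm{MHS}}(\Q, W_{-1}H) \cong \mathrm{Pic}^0(X_0)$ furnished by the modified cohomological mixed Hodge complex is the heart of the calculation, and will occupy Sections \ref{geo 1}, \ref{geo 2}, and \ref{geo 3}.
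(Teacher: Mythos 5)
Your plan is correct in outline and ends up hinging on the same geometric fact as the paper --- that $uv=t\circ f$ near a node forces the transition functions $u^{n_p}$ and $(1/v)^{n_p}$ --- but the mechanics are genuinely different. You compute $\mu_t(D)$ as a Carlson extension class, as the difference of an $F^0$-lift (a log form with residues $\pm n_p$) and a $\Q$-rational lift (a weighted sum of loops around the nodes); the paper instead explicitly sets Carlson's formula aside and uses Deligne's construction (2) directly: take any $\Q$-rational lift $\kappa$ of $D$ in $\mathbb{H}^1(A_{\mathbb{Q}}^{\bullet})$, push it through $\varphi$ into $H^1(\mathcal{O}_{X_0})$, and exponentiate (Theorem \ref{4.1.6}). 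More importantly, your rational lift is topological, whereas the paper's rational structure is the log-Koszul complex $A_{\mathbb{Q}}^{\bullet}$ built from $\mathcal{M}=\mathcal{M}_{X_0}^{gp}\otimes\Q$, so its lift is an algebraic \v{C}ech cocycle $\kappa=(\{\overline{G^{\alpha}\wedge\tilde t}\},\dots,\{(\log G^{\alpha}_{\beta}-\log G^{\beta}_{\alpha})\otimes\tilde t\})$, and the identity you invoke as $d\log u+d\log v=d\log(t\circ f)$ appears there as $(u_{\alpha})^{n_{\alpha}}\wedge\tilde t=(1/v_{\alpha})^{n_{\alpha}}\wedge\tilde t$ in $\wedge^2\mathcal{M}$. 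Your route buys a more transparent transcendental picture (close in spirit to Hoffman's), but it obliges you to prove a comparison you currently elide: that your loop-theoretic $\Q$-lift really represents the rational structure of whatever cohomological mixed Hodge complex you use (this is the content of Theorem \ref{2.2.3}(3) in the paper, and is not free). Two further points you pass over too quickly: the divisorial part $\mathcal{O}_{X_0[1]}(\sum n_p(p'-p''))$ is not ``visible'' without an argument --- the paper derives it in Section \ref{geo 1} by mapping to the open curve $Y=X_0[1]\setminus a_1^{-1}(S)$ via the real oriented blowup and quoting Deligne's 1-motive of an open curve --- and your $F^0$-lift must be exhibited as an honest cocycle in the Steenbrink complex (the discrepancy between the two branch representatives has to be absorbed by a coboundary, which is exactly where the $\log G^{\alpha}_{\beta}$ terms enter in the paper's $\kappa$). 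Neither issue is fatal, but both require real work before the proposal is a proof.
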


We would like to mention the work by Jerome William Hoffman \cite{hoff}. He gives the 1-motives associated to $\mathbb{Z}$-limit mixed Hodge structures for semi-stable degenerations of curves, which coincides with the 1-motive in this paper after tensoring with $\mathbb{Q}$. The method in \cite{hoff} is more transcendental. Also, as an application, the behavior at infinity of the Torelli map is describle by the above 1-motive in \cite{hoff}.\\ 

\textbf{Acknowledgements.} I would like to thank my advisor Professor Donu Arapura for very helpful guidance and conversations throughout this project, and Partha Solapurkar for many useful discussions. I would also like to thank Professor Pierre Deligne for his priceless comments and suggestions.\\

\section{Logarithmic Structure and Koszul Complex} \label{LSKC}

In this section, we will briefly recall the logarithmic structures associated to the geometric setup in the introduction and some terminologies that will be used in the following sections.

\subsection{Logarithmic structure associated to semi-stable degenerations of curves}

For the semi-stable degeneration of curves $f: X\rightarrow \Delta$ given in the introduction, we have the associated \textit{log structure} for the embedding $i: X_0\hookrightarrow X$, which is the sheaf of monoids $\mathcal{M}_X:=\mathcal{O}_X\cap j_*\mathcal{O}_{X^*}^*$ together with the natural structure morphism $\alpha: \mathcal{M}_X\rightarrow\mathcal{O}_X$ such that $\alpha^{-1}(\mathcal{O}^*_X)\cong \mathcal{O}^*_X$. Similarly we have the log structure $\mathcal{M}_{\Delta}$ for the embedding $0\hookrightarrow \Delta$. An analytic space with a log structure $(X, \mathcal{M}_X)$ is called a \textit{log space}. The family $f$ gives a morphism of log spaces $\underline{f}: (X, \mathcal{M}_X)\rightarrow (\Delta, \mathcal{M}_{\Delta})$. We will omit the definition of morphisms of log spaces, since we will not use it in the following sections. For the definitions of a morphism of log spaces, refer to Steenbrink \cite[section 4]{st2} or Illusie \cite[section 1]{ill2}.

Restricting the log space $(X, \mathcal{M}_{X})$ to the central fiber $X_0$, we get a log space $(X_0, \mathcal{M}_{X_0})$, where $\mathcal{M}_{X_0}$ and the structure morphism $\beta:\mathcal{M}_{X_0}\rightarrow \mathcal{O}_{X_0}$ are defined through the following pushout square in the category of sheaves of monoids:
\[
\xymatrix{
&\gamma^{-1}(\mathcal{O}_{X_0}^*)\ar@{}[dr] |(.7){\lrcorner} \ar[d]^{\gamma} \ar@{^{(}->}[r] & i^{-1}\mathcal{M}_X\ar@/^/[ddr]^{\gamma} \ar[d] \\
&\mathcal{O}_{X_0}^* \ar@/_/[drr] \ar@{^{(}->}[r] &\mathcal{M}_{X_0}\ar[dr]^\beta\\
&&&\mathcal{O}_{X_0}
}
\tag{1.1}
\]
where $\gamma: i^{-1}\mathcal{M}_X\rightarrow \mathcal{O}_{X_0}$ is the composition map $$i^{-1}\mathcal{M}_X\rightarrow i^{-1}\mathcal{O}_X\rightarrow\mathcal{O}_{X_0}.$$ Hence $\mathcal{M}_{X_0}=\mathcal{O}_{X_0}^*\oplus i^{-1}\mathcal{M}_X/\sim$, where the equivalent relation is: $(f,m)\sim(f',m')$ if and only if there exists  $a,b \in \gamma^{-1}(\mathcal{O}_{X_0}^*)$ such that $m/m'=b/a$ and $f/f'=\gamma(a)/\gamma(b)$. 

For the log space $(X, \mathcal{M}_X)$, there exists a short exact sequence
\[
\xymatrix{0\ar[r] &\mathcal{O}_{X}^*\ar[r] &\mathcal{M}_{X}\ar[r]&(a_1)_*\mathbb{N}_{X_0[1]}\ar[r] &0},
\] where $a_1: X_0[1]\rightarrow X_0$ is the normalization of the nodal curve $X_0$. By abuse of notation, we also denote $a_1: X_0[1]\rightarrow X$ for the composition of the normalization morphism and the inclusion.

When we restrict the log space $(X, \mathcal{M}_X)$ to $X_0$, we still have the following short exact sequence \[
\xymatrix{0\ar[r] &\mathcal{O}_{X_0}^*\ar[r] &\mathcal{M}_{X_0}\ar[r]&(a_1)_*\mathbb{N}_{X_0[1]}\ar[r] &0}.
\]

Since the family $f$ is semi-stable, there exists a global section $\tilde{t}=t\circ f\in \Gamma(\mathcal{M}_{X})$ being mapped to $(1,...,1)$, the generator of $(a_1)_*\mathbb{N}_{X_0[1]}$. Similarly, there is a global section $\tilde{t}\in \Gamma(\mathcal{M}_{X_0})$ which is mapped to $(1,...,1)$.

Let's end this subsection with the following notations that we will use later.

\begin{note}
(1) Denote $X_0[2]$ the 2-fold intersection of the irreducible components of $X_0$, i.e., the set of nodal points of $X_0$, and $a_2: X_0[2]\rightarrow X$ be the natural inclusion map.

(2) Denote the number of the nodal points of $X_0$ by $d$ and the number of the irreducible components by $n$. \ For convenience, we fix an order of the irreducible components $\{X_{0,i}\}_{i=1}^{n}$ of $X_0$.
\end{note}


\subsection{Koszul complexes for homomorphisms of free abelian groups}

In this subsection we will list several constructions and results about the divided power envelop and the Koszul complex in Steenbrink \cite{st2}, Fujisawa \cite{fu2}, and Illusie \cite{ill1}. Our discussion is for free abelian groups, but the constructions and results can be generalized to sheaves of free abelian groups.

Let $E$ be a free abelian group. Denote $T(E)$, $S(E)$, and $\bigwedge(F)$ to be the tensor algebra, the symmetric algebra, and the exterior algebra of $E$, respectively. For $x\in E$ and $n\in \mathbb{N}$, denote $\gamma_n(x)=x^n/n!$. Note that $\gamma_n(x+y)=\Sigma_{i+j=n}\gamma_i(x)\gamma_j(y)$. Let $\{e_{\alpha}\}$ be a basis of $E$. Define $\Gamma_n(E)$ to be a free subgroup of $S_n(E\otimes\mathbb{Q})$ generated by $\{\gamma_{n_1}(e_{\alpha_1})...\gamma_{n_m}(e_{\alpha_m})| \Sigma n_i=n\}$, then $\Gamma(E)=\oplus_n\Gamma_n(E)\subset S(E_{\mathbb{Q}})$ is a subalgebra.

For any homomorphism of free abelian groups $\varepsilon: E\rightarrow F$ we have an induced homomorphism $\Gamma(\varepsilon)$: $\Gamma(E)\rightarrow \Gamma(F)$ of graded $\mathbb{Z}$-algebras. Then we have the bigraded algebra $Kos(\varepsilon):=\Gamma(E)\otimes\bigwedge(F)$. Note that $Kos(\varepsilon)$ is commutative in the first degree and anti-commutative in the second degree.

\begin{lemma}[{\cite[proposition 4.3.1.2]{ill1}}] \label{1.2.1}
For any homomorphism of finitely generated free abelian groups $\varepsilon: E\rightarrow F$, there exists a unique endomorphism $d$ of bidegree $(-1, 1)$ of $Kos(\varepsilon)$ satisfying the following axioms:

(1) $d(xx')=(dx)x'+(-1)^qx(dx')$ for $x$ a homogeneous element of bidegree $(p, q)$ in Kos$(\varepsilon)$;

(2) $d(\gamma_k(x)\otimes 1)=\gamma_{k-1}(x)\otimes \varepsilon(x)$ for $x\in E$;

(3) $d(1\otimes y)=0$ for $y\in F$.

\end{lemma}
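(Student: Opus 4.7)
The plan is to establish uniqueness first and then produce $d$ by extending scalars to $\mathbb{Q}$. Uniqueness is essentially formal: the bigraded algebra $\Kos(\varepsilon)=\Gamma(E)\otimes\bigwedge(F)$ is generated by the elements $\gamma_k(x)\otimes 1$ (for $x\in E$ and $k\geq 0$) together with the elements $1\otimes y$ (for $y\in F$), since $\Gamma(E)$ is generated as a $\mathbb{Z}$-algebra by its divided powers and $\bigwedge(F)$ is generated by $F$ in degree one. Axioms (2) and (3) prescribe $d$ on these generators, and axiom (1)---the graded Leibniz rule---propagates the definition uniquely to every product. Hence any endomorphism of bidegree $(-1,1)$ satisfying the three axioms is determined by them.

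For existence I would fix bases $\{e_{\alpha}\}$ of $E$ and $\{f_{\beta}\}$ of $F$ and first work rationally. Because $\Gamma(E)\otimes\mathbb{Q}$ coincides with the symmetric algebra $S(E\otimes\mathbb{Q})$, it is the free commutative $\mathbb{Q}$-algebra on $E\otimes\mathbb{Q}$, and $\bigwedge(F)\otimes\mathbb{Q}$ is the free graded-commutative $\mathbb{Q}$-algebra on $F\otimes\mathbb{Q}$ placed in degree one. A derivation of bidegree $(-1,1)$ on $\Kos(\varepsilon)\otimes\mathbb{Q}$ is therefore freely and uniquely determined by its values on generators in bidegrees $(1,0)$ and $(0,1)$, so I may define $d_{\mathbb{Q}}$ by
\begin{equation*}
d_{\mathbb{Q}}(x\otimes 1)=1\otimes\varepsilon(x),\qquad d_{\mathbb{Q}}(1\otimes y)=0,
\end{equation*}
for $x\in E\otimes\mathbb{Q}$ and $y\in F\otimes\mathbb{Q}$. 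Using $\gamma_k(x)=x^{k}/k!$ a routine induction then gives $d_{\mathbb{Q}}(\gamma_k(x)\otimes 1)=\gamma_{k-1}(x)\otimes\varepsilon(x)$, so axiom (2) is automatic.

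The step I expect to require actual care is showing that $d_{\mathbb{Q}}$ preserves the integral lattice $\Kos(\varepsilon)\subset \Kos(\varepsilon)\otimes\mathbb{Q}$. On a typical $\mathbb{Z}$-basis element $\gamma_{n_{1}}(e_{\alpha_{1}})\cdots\gamma_{n_{m}}(e_{\alpha_{m}})\otimes f_{\beta_{1}}\wedge\cdots\wedge f_{\beta_{q}}$ the iterated Leibniz rule produces
\begin{equation*}
\sum_{i=1}^{m}\gamma_{n_{1}}(e_{\alpha_{1}})\cdots\gamma_{n_{i}-1}(e_{\alpha_{i}})\cdots\gamma_{n_{m}}(e_{\alpha_{m}})\otimes\varepsilon(e_{\alpha_{i}})\wedge f_{\beta_{1}}\wedge\cdots\wedge f_{\beta_{q}},
\end{equation*}
a $\mathbb{Z}$-linear combination of basis elements of $\Kos(\varepsilon)$. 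The only place a denominator could conceivably sneak in is the reconciliation of this formula with the non-polynomial product $\gamma_{m}(x)\gamma_{n}(x)=\binom{m+n}{n}\gamma_{m+n}(x)$, and that consistency reduces to the Pascal identity $\binom{m+n-1}{n-1}+\binom{m+n-1}{n}=\binom{m+n}{n}$. Once that verification is recorded, the restriction $d:=d_{\mathbb{Q}}|_{\Kos(\varepsilon)}$ is the required endomorphism, and axioms (1)--(3) hold for $d$ by construction together with the uniqueness argument above.
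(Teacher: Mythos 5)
Your proof is correct. The paper itself gives no argument for this lemma --- it is quoted verbatim from Illusie \cite[proposition 4.3.1.2]{ill1} --- so there is nothing internal to compare against; your route (uniqueness from the Leibniz rule on the multiplicative generators $\gamma_k(e_\alpha)\otimes 1$ and $1\otimes f_\beta$, existence by defining the derivation on $S(E_\mathbb{Q})\otimes\bigwedge(F_\mathbb{Q})$ and checking via the explicit formula $d(\gamma_{n_1}(e_{\alpha_1})\cdots\gamma_{n_m}(e_{\alpha_m})\otimes y)=\sum_i\gamma_{n_1}(e_{\alpha_1})\cdots\gamma_{n_i-1}(e_{\alpha_i})\cdots\gamma_{n_m}(e_{\alpha_m})\otimes\varepsilon(e_{\alpha_i})\wedge y$ that the lattice $\Kos(\varepsilon)$ is preserved) is exactly the standard argument, and the integral formula you land on is the one the paper records immediately after the lemma.
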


Fix an integer $n$, denote $Kos^{n}(\varepsilon)^q:=\Gamma_{n-q}(E)\otimes \overset{q}{\bigwedge}(F)$. Then we have a cochain complex (with respect to index $q$) of free abelian groups $Kos{^n(\varepsilon)}^{\bullet}$, with the above differential $d$:
$$\cdots\longrightarrow Kos^{n}(\varepsilon)^{q-1}\stackrel{d^{q-1}}{\longrightarrow}Kos^{n}(\varepsilon)^q\stackrel{d^{q}}{\longrightarrow}Kos^{n}(\varepsilon)^{q+1}\stackrel{d^{q+1}}{\longrightarrow}\cdots$$
The above differential $d^{q}$ is explicitly given by:$$ d^{q}(\gamma_{n_1}(x_1)..\gamma_{n_k}(x_k)\otimes y)=\Sigma^{k}_{i=1}\gamma_{n_1}(x_1)..\gamma_{n_i-1}(x_i)..\gamma_{n_k}(x_k)\otimes \varepsilon(x_i)\wedge y,$$
where $x_i\in E,\  \Sigma_in_i=n-q,\  y\in\overset{q}{\bigwedge}(F)$.

\begin{lemma}[{\cite[proposition 4.3.1.6]{ill1}}]\label{1.2.2}
Suppose that  $\varepsilon$ is a homomorphism as that in lemma \ref{1.2.1}. Moreover, assume $\coker(\varepsilon)$ is a free abelian group. Then we have the following isomorphism:

$$H^p(Kos^n(\varepsilon)^{\bullet})\cong\Gamma_{n-p}(\ker(\varepsilon))\otimes\overset{p}{\bigwedge}(\coker(\varepsilon)),$$

where $H^p(Kos^n(\varepsilon)^{\bullet})$ is the $p$-th cohomology of the complex $Kos^n(\varepsilon)^{\bullet}$.
\end{lemma}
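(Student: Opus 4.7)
The plan is to exploit the freeness of $\coker(\varepsilon)$ to split $\varepsilon$ as the composition of the projection onto a direct summand with the inclusion of a direct summand, so that the Koszul complex factors as a tensor product in which only one of three factors carries a nonzero differential. The computation then reduces to the classical acyclicity of the divided-power Koszul complex attached to the identity map.

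In detail, set $K = \ker(\varepsilon)$, $I = \im(\varepsilon)$, $C = \coker(\varepsilon)$. Since $C$ is free, the sequence $0 \to I \to F \to C \to 0$ splits, whence $I$ is a direct summand of $F$ and so itself free; consequently $0 \to K \to E \to I \to 0$ also splits. Fix splittings $E \cong K \oplus I$ and $F \cong I \oplus C$ identifying $\varepsilon$ with $(k,i)\mapsto(i,0)$. The canonical identifications $\Gamma(K \oplus I) \cong \Gamma(K) \otimes \Gamma(I)$ and $\bigwedge(I \oplus C) \cong \bigwedge(I) \otimes \bigwedge(C)$ express the bigraded algebra $Kos(\varepsilon)$ as
\[
\Gamma(K) \otimes \bigl[\,\Gamma(I) \otimes \bigwedge(I)\,\bigr] \otimes \bigwedge(C).
\]
The derivation axiom (1) of Lemma~\ref{1.2.1}, combined with axioms (2) and (3), forces the differential to vanish on the outer $\Gamma(K)$ factor (because $\varepsilon|_K = 0$) and on the outer $\bigwedge(C)$ factor (by axiom (3)), while on the middle factor it coincides with the Koszul differential of $\operatorname{id}_I$. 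Restricting to weight $n$ thus yields, for each $p$,
\[
Kos^n(\varepsilon)^p \;\cong\; \bigoplus_{a + m + d = n,\ d \leq p}\ \Gamma_a(K) \otimes Kos^m(\operatorname{id}_I)^{p-d} \otimes \bigwedge^d(C),
\]
with the differential moving only the middle index.

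The crux is then the subsidiary claim that $Kos^m(\operatorname{id}_I)^{\bullet}$ is acyclic for $m \geq 1$ and equals $\mathbb{Z}$ in degree zero for $m = 0$. I would prove this by Künneth reduction: the canonical isomorphism of differential bigraded algebras $Kos(\operatorname{id}_{I_1 \oplus I_2}) \cong Kos(\operatorname{id}_{I_1}) \otimes Kos(\operatorname{id}_{I_2})$ permits induction on $\rank(I)$. In the rank-one case $I = \mathbb{Z} \cdot e$, the complex $Kos^m(\operatorname{id}_I)^{\bullet}$ has only two nonzero terms, $\mathbb{Z} \cdot \gamma_m(e)$ in degree $0$ and $\mathbb{Z} \cdot (\gamma_{m-1}(e) \otimes e)$ in degree $1$; the differential sends one generator to the other and so is an isomorphism for $m \geq 1$, while for $m = 0$ the complex is $\mathbb{Z}$ concentrated in degree zero.

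Feeding this back, only the summands with $m = 0$ contribute to cohomology, and they live in cohomological degree $p = d$; summing over $a + d = n$ at a fixed such $p$ recovers
\[
H^p(Kos^n(\varepsilon)^{\bullet}) \;\cong\; \Gamma_{n-p}(K) \otimes \bigwedge^p(C) \;=\; \Gamma_{n-p}(\ker(\varepsilon)) \otimes \bigwedge^p(\coker(\varepsilon)),
\]
as asserted. The principal technical obstacle is the integral acyclicity of $Kos^m(\operatorname{id}_I)$: the standard Euler-identity contracting homotopy familiar from the symmetric-algebra setting picks up denominators once one passes from $S(I)$ to $\Gamma(I)$, so one cannot simply copy that argument, but the rank-one Künneth reduction above neatly circumvents this difficulty.
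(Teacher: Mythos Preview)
Your argument is correct. Note, however, that the paper does not supply its own proof of this lemma: it is stated with a citation to Illusie \cite[proposition 4.3.1.6]{ill1} and used as a black box thereafter. Your splitting-and-K\"unneth argument is the standard route to this result and is essentially the one Illusie gives; the key observation that the divided-power Koszul complex of the identity on a rank-one module is visibly acyclic, together with the multiplicativity $Kos(\operatorname{id}_{I_1\oplus I_2})\cong Kos(\operatorname{id}_{I_1})\otimes Kos(\operatorname{id}_{I_2})$, is exactly what makes the integral statement go through without the denominators one would encounter from a naive contracting homotopy.
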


\begin{rmk}[{\cite[(1.5)]{fu1}}]\label{1.2.3}
For any free abelian subgroup $G\subset F$ and any integer $0\leq m\leq q$, we can define a subgroup $W(G)_mKos^{n}(\varepsilon)^q$ of $Kos^{n}(\varepsilon)^q$ to be the image of the morphism $$\Gamma_{n-q}(E)\otimes\overset{q-m}{\bigwedge}G\otimes\overset{m}{\bigwedge}F\longrightarrow\Gamma_{n-q}(E)\otimes\overset{q}{\bigwedge}F; \ x\otimes y\otimes z\mapsto x\otimes(y\wedge z).$$ Also, we define $W(G)_mKos^{n}(\varepsilon)^q=0$ for $m<0$, and $W(G)_mKos^{n}(\varepsilon)^q=Kos^{n}(\varepsilon)^q$ for $m>q$. Moreover, if $\varepsilon(E)\subset G$, the above construction gives us a finite increasing filtration $W(G)_{\bullet}$ on the complex $Kos^n(\varepsilon)^{\bullet}$.
\end{rmk}

\begin{prop}[{\cite[proposition 1.6]{fu1}}]\label{1.2.4}
Suppose that $E, G, F, and \ \varepsilon: E\rightarrow F$ are as that in remark \ref{1.2.3}. Denote $\varepsilon_G: E\rightarrow G$ to be the induced map by $\varepsilon$. Then we have the following isomorphism of complexes induced by the morphism in remark \ref{1.2.3}:

$$Gr^{W(G)}_mKos^n(\varepsilon)^{\bullet}\cong Kos^{n-m}(\varepsilon_G)^{\bullet}[-m]\otimes \overset{m}{\bigwedge}(F/G),$$
where $[-m]$ means shifting the complex by $m$.
\end{prop}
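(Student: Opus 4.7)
The plan is to compute $Gr^{W(G)}_m$ of the filtered complex $Kos^n(\varepsilon)^{\bullet}$ term by term and then identify the induced differential with the Koszul differential for $\varepsilon_G$.

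First I would analyze the filtration on $\overset{\bullet}{\bigwedge}F$ alone. By definition, $W(G)_m\overset{q}{\bigwedge}F$ is the image of $\overset{q-m}{\bigwedge}G \otimes \overset{m}{\bigwedge}F \to \overset{q}{\bigwedge}F$, spanned by expressions $g_1\wedge\cdots\wedge g_{q-m}\wedge f_1\wedge\cdots\wedge f_m$ with $g_i\in G$ and $f_j\in F$. If some $f_j$ happens to lie in $G$, anti-commuting it past an $f_{j'}$ shows the element already belongs to $W(G)_{m-1}\overset{q}{\bigwedge}F$. Consequently the rule
\[
g_1\wedge\cdots\wedge g_{q-m}\otimes \bar f_1\wedge\cdots\wedge\bar f_m \;\longmapsto\; g_1\wedge\cdots\wedge g_{q-m}\wedge f_1\wedge\cdots\wedge f_m
\]
defines a well-posed isomorphism $\overset{q-m}{\bigwedge}G \otimes \overset{m}{\bigwedge}(F/G) \xrightarrow{\cong} Gr^{W(G)}_m\overset{q}{\bigwedge}F$. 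Tensoring with $\Gamma_{n-q}(E)$ and noting $\Gamma_{(n-m)-(q-m)}(E)=\Gamma_{n-q}(E)$ yields the degree-wise identification
\[
Gr^{W(G)}_m Kos^n(\varepsilon)^q \;\cong\; \bigl(Kos^{n-m}(\varepsilon_G)^{q-m}\bigr)\otimes \overset{m}{\bigwedge}(F/G),
\]
which is exactly the degree-$q$ component of $Kos^{n-m}(\varepsilon_G)^{\bullet}[-m]\otimes \overset{m}{\bigwedge}(F/G)$.

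Next I would check compatibility with the differentials. The explicit formula from Lemma \ref{1.2.1} shows that each term of $d(\gamma_{n_1}(x_1)\cdots\gamma_{n_k}(x_k)\otimes y)$ inserts $\varepsilon(x_i)$ into the exterior factor, and by hypothesis $\varepsilon(x_i)\in G$. Hence $d$ raises the ``$G$-degree'' by one and preserves the ``$F/G$-degree'', so $d$ preserves $W(G)_\bullet$. On the graded piece, represent an element as $\xi\otimes y\otimes z$ with $\xi\in\Gamma_{n-q}(E)$, $y\in\overset{q-m}{\bigwedge}G$, $z\in\overset{m}{\bigwedge}(F/G)$; then the induced differential is obtained by moving the inserted $\varepsilon(x_i)\in G$ leftwards past $z$, producing a sign $(-1)^m$ and then acting as the Koszul differential for $\varepsilon_G$ on $\xi\otimes y$ while leaving $z$ alone. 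This is precisely the differential of $Kos^{n-m}(\varepsilon_G)^{\bullet}[-m]\otimes \overset{m}{\bigwedge}(F/G)$.

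The main obstacle is purely bookkeeping: tracking the sign $(-1)^m$ from the anticommutation of $\varepsilon(x_i)$ past the $\overset{m}{\bigwedge}(F/G)$ factor against the sign convention implicit in the $[-m]$ shift of the target complex, so that the proposed isomorphism is strictly a chain map and not merely a map up to sign. Once the standard convention that shifting a complex by $[-m]$ multiplies the differential by $(-1)^m$ is adopted, the two signs cancel and the map above becomes a genuine isomorphism of complexes, completing the proof.
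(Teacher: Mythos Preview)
The paper does not supply its own proof of this proposition; it is quoted directly from Fujisawa \cite{fu1}. Your argument is the standard one and is essentially correct.

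Two small points deserve attention. First, your degree-wise map $\bigwedge^{q-m}G\otimes\bigwedge^m(F/G)\to Gr^{W(G)}_m\bigwedge^qF$ is visibly well-defined and surjective, but injectivity genuinely uses that $F/G$ is free (equivalently that $G\hookrightarrow F$ splits): choosing a splitting $F\cong G\oplus(F/G)$ gives $\bigwedge^qF\cong\bigoplus_{a+b=q}\bigwedge^aG\otimes\bigwedge^b(F/G)$, from which the identification of $W(G)_m$ and its graded pieces is immediate. Over $\mathbb{Z}$ without this hypothesis the map can fail to be injective (take $G=2\mathbb{Z}\subset F=\mathbb{Z}$ and $q=2$, $m=1$); in the paper's applications one works over $\mathbb{Q}$, so this is harmless.

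Second, your sign bookkeeping is slightly off. With the map of Remark~\ref{1.2.3}, namely $\xi\otimes y\otimes z\mapsto\xi\otimes(y\wedge\tilde z)$ with the $G$-factor $y$ on the \emph{left}, the Koszul differential inserts $\varepsilon(x_i)$ at the far left of $y\wedge\tilde z$; it never has to pass $\tilde z$, so no $(-1)^m$ arises from anticommutation. The induced differential on the graded piece is literally $d_{Kos^{n-m}(\varepsilon_G)}\otimes\mathrm{id}$. Whether this matches the differential on the target then depends only on the sign convention for the shift $[-m]$: with the convention that shifting leaves the differential unchanged, the map is a strict chain map as written; with $d_{[-m]}=(-1)^md$ one must build a sign into the isomorphism (for instance by placing the $F/G$-factor on the left in $\bigwedge^qF$, which \emph{does} force $\varepsilon(x_i)$ to cross $m$ factors). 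Either way the content is the same, but the origin of the sign is the shift convention, not the anticommutation you describe.
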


Note that in Fujisawa \cite{fu1}, the above proposition is stated for sheaves of free abelian groups.

\begin{rmk} \label{1.2.5}
These constructions about the Koszul complex above can be generalized to any flat $A$-module, where $A$ is a subalgebra of $\mathbb{C}$, e.g. $\mathbb{Q},$ and $\mathbb{C}$.
\end{rmk}

\section{Steenbrink's Double Complex}\label{double complex}

In this section, we will recall the Steenbrink's limit cohomological mixed Hodge complex on the complex level and construct a modified Steenbrink's limit cohomological mixed Hodge complex on the rational level, associated to the geometric setting described in the introduction.

\subsection{The complex structure}

Consider the sheaves of log differentials $\Omega^i_X(\log  X_0), i=0,1,2$. For each $i$, we have the \textit{Deligne's filtration}:
\begin{equation*}
    W_p{\Omega_X^i}(\log X_0):=
    \begin{cases}
  0 & \text{for}\  p<0,\\
  \Omega_{X}^i(\log X_0) & \text{for}\  p\geq i,  \\
  \Omega_{X}^{i-p}\wedge\Omega_{X}^{p}(\log X_0) & \text{for}\  0\leq p\leq i.
  \end{cases}
\end{equation*}

Fix a parameter $t: \Delta\rightarrow \mathbb{C}$. Define the \textit{Steenbrink double complex} as follows.

$A_t^{p,q}:=\Omega_{X}^{p+q+1}(\log X_0)/W_p\Omega_{X}^{p+q+1}(\log X_0)$ with two differentials:

(1) $d':A_t^{p,q}\rightarrow A_t^{p+1,q}$ by $d'(\omega):=$ class of $\theta\wedge\omega$, where $\theta=f^*(dt/t)$, $t$ is the parameter chosen for $\Delta$;

(2) $d'':A_t^{p,q}\rightarrow A_t^{p,q+1}$ by $d''(\omega):=$ class of $d\omega$.\

Define the monodromy weight filtration $M_{\bullet}$ and Hodge filtration $F^{\bullet}$ of the double complex as follows.

$M_rA_t^{p,q}:=$ The image of $W_{2p+r+1}\Omega_{X}^{p+q+1}(\log X_0)$ in $A_t^{p,q}$.

The Hodge filtration $F^{\bullet}$ is defined by the ``stupid filtration'' of the double complex.

Specifically, for the semi-stable degeneration $f$ of curves, we have the double complex $A_t^{\bullet,\bullet}$:
\[
\xymatrix{
&\frac{\Omega_{X}^2(\log X_0)}{\Omega_{X}^2}\\
&\frac{\Omega_{X}^{1}(\log X_0)}{\Omega_{X}^1}\ar[u]_{d''}\ar[r]^-{d'}  &\frac{\Omega_{X}^2(\log X_0)}{\Omega_{X}^1\wedge\Omega_{X}^1(\log X_0)}. \\
}
\tag{2.1}
\]

If we denote the total complex $Tot(A_t^{\bullet,\bullet})$ of the above double complex to be $A_t^{\bullet}$, then we have the $\mathbb{C}$-structure of a cohomological mixed Hodge complex $(A_t^{\bullet}, M_{\bullet}, F^{\bullet})$, which gives the $\C$-limit mixed Hodge structure associated to the semi-stable degeneration of curves.  From now on, for simplicity, we just denote $A_t^{\bullet}$ by $A^{\bullet}$, for the fixed parameter $t$.

\subsection{The rational structure}

In this section, we follow the terminologies and constructions in Steenbrink \cite{st2} to give a rational structure of the limit  mixed Hodge structures of a semistable degeneration of curves.

Under the basic constructions in section \ref{LSKC},  we consider the morphism $\textbf{e}: \mathcal{O}_{X}\rightarrow\mathcal{M}_{X}:=\mathcal{O}_X\cap j_*\mathcal{O}_{U}^*$, which is the composition of the exponential map $e^{2\pi \sqrt[]{-1}(\cdot)}: \mathcal{O}_{X}\rightarrow\mathcal{O}^*_{X}$ and the inclusion map $\mathcal{O}^*_{X}\hookrightarrow \mathcal{M}_{X}$.
Then we have the following exact sequence of sheaves (Steenbrink \cite[lemma 2.7]{st2}):
\[
\xymatrix{0\ar[r] &\mathbb{Z}_{X}\ar[r]&\mathcal{O}_{X}\ar[r]^{\textbf{e}} &\mathcal{M}^{gp}_{X}\ar[r]&(a_1)_*\mathbb{Z}_{X_0[1]}\ar[r] &0},
\]
where $\mathcal{M}^{gp}_{X}$ is the groupification of the sheaf of monoids $\mathcal{M}_{X}$.

As in section \ref{LSKC}, restricting the log structure $\mathcal{M}_{X}$ along the closed immersion $i: X_0\hookrightarrow X$ to $X_0$ provides a log structure $\mathcal{M}_{X_0}$ over $X_0$.  Then we have the following exact sequence over $X_0$ (Steenbrink \cite[(3.9)]{st2}):
\[
\xymatrix{0\ar[r] &\mathbb{Z}_{X_0}\ar[r]&\mathcal{O}_{X_0}\ar[r]^{\textbf{e}} &\mathcal{M}^{gp}_{X_0}\ar[r]&(a_1)_*\mathbb{Z}_{X_0[1]}\ar[r] &0}.
\]

After tensoring with $\mathbb{Q}$, we get an exact sequence
\[
\xymatrix{0\ar[r] &\mathbb{Q}_{X_0}\ar[r]&\mathcal{O}_{X_0}\ar[r]^-{\textbf{e}} &\mathcal{M}^{gp}_{X_0}\otimes \mathbb{Q}\ar[r]&(a_1)_*\mathbb{Q}_{X_0[1]}\ar[r] &0},
\]
with the global section $\tilde{t}\in \Gamma (X_0, \mathcal{M}_{X_0}^{gp}\otimes\mathbb{Q})$ which is mapped to (1,...,1).  From now on, for simplicity, we will denote $\mathcal{M}_{X_0}^{gp}\otimes\mathbb{Q}$ by $\mathcal{M}$.

Consider the morphism $\textbf{e}: \mathcal{O}_{X_0} \rightarrow \mathcal{M}$ of sheaves of $\Q$-vector spaces in the above exact sequence. We have the following data in terms of terminologies in section \ref{LSKC}:

(1) $K^q:=Kos^{2}(\textbf{e})^q=\Gamma_{2-q}(\mathcal{O}_{X_0})\otimes{\overset{q}{\bigwedge}}\mathcal{M}$;\ \ \ \ \ \ \ \ \ \ \ \ \ \ \ \ \ \ \ \ \ \ \ \ \ \ \ \ \ \ \ \ \ \ \ \ \ \ \ \ \ \ \ \ \ \ \ \ \ \  \ (2.2)\label{formula 2.2}

(2) $W_mK^q:=$The image of $\Gamma_{2-q}(\mathcal{O}_{X_0})\otimes{\overset{q-m}{\bigwedge}}\textbf{e}(\mathcal{O}_{X_0})\otimes{\overset{m}{\bigwedge}}\mathcal{M}$ in $K^q$,


\begin{thm}\label{2.2.1}
The morphism
$$\phi_m: W_mK^q\rightarrow W_m\Omega_X^q(\log X_0);$$ $$f_1\cdot ...\cdot f_{2-q}\otimes \textbf{e}(g_1)\wedge...\wedge \textbf{e}(g_{q-m})\otimes y_1\wedge...\wedge y_m$$ $$\mapsto (2\pi \sqrt[]{-1})^{-q}(\underset{i=1}{\overset{2-q}{\Pi}}f_i) dg_1\wedge...\wedge dg_{q-m}\wedge \frac{dy_1}{y_1}\wedge...\wedge\frac{dy_m}{y_m}$$
induces a filtered quasi-isomorphism between $(K^{\bullet}\otimes \mathbb{C}, W_{\bullet})$ and $(\Omega_X^{\bullet}(\log X_0), W_{\bullet}).$
\end{thm}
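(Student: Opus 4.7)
The plan is to show that $\phi$ is a filtered quasi-isomorphism by reducing to the check that each $Gr^W_m\phi$ is a quasi-isomorphism, for $m = 0, 1, 2$. First I would verify that $\phi_m$ is a well-defined morphism of complexes respecting the weight filtration; this requires (a) compatibility with the divided power relations defining $\Gamma(\mathcal{O}_{X_0})$, (b) the identity $d\mathbf{e}(g)/\mathbf{e}(g) = 2\pi\sqrt{-1}\, dg$ which, together with the normalization factor $(2\pi\sqrt{-1})^{-q}$, ensures that $\phi$ intertwines the Koszul differential of Lemma \ref{1.2.1} with the exterior derivative, and (c) observing that the image of $W_m K^q$ lies in $W_m\Omega_X^q(\log X_0)$ by construction.

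For the graded pieces on the Koszul side, I would apply Proposition \ref{1.2.4} with $E = \mathcal{O}_{X_0}$, $F = \mathcal{M}$, and $G = \mathbf{e}(\mathcal{O}_{X_0})$. The exact sequence
$$0\to \mathbb{Q}_{X_0}\to \mathcal{O}_{X_0}\xrightarrow{\mathbf{e}}\mathcal{M}\to (a_1)_*\mathbb{Q}_{X_0[1]}\to 0$$
identifies $F/G \cong (a_1)_*\mathbb{Q}_{X_0[1]}$ and $\ker\varepsilon_G = \mathbb{Q}_{X_0}$, yielding
$$Gr^W_m(K^{\bullet}\otimes\mathbb{C}) \cong Kos^{2-m}(\varepsilon_G)^\bullet[-m] \otimes \overset{m}{\bigwedge}((a_1)_*\mathbb{C}_{X_0[1]}).$$
Since $\varepsilon_G$ is surjective with kernel $\mathbb{C}_{X_0}$, Lemma \ref{1.2.2} shows $Kos^{2-m}(\varepsilon_G)^\bullet\otimes\mathbb{C}$ is a resolution of $\Gamma_{2-m}(\mathbb{C}_{X_0}) = \mathbb{C}_{X_0}$ concentrated in degree $0$. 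Hence $Gr^W_m(K^\bullet\otimes\mathbb{C})$ is quasi-isomorphic to $\overset{m}{\bigwedge}((a_1)_*\mathbb{C}_{X_0[1]})$ placed in degree $m$.

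On the log de Rham side, the classical Poincaré residue gives a quasi-isomorphism $Gr^W_m\Omega_X^\bullet(\log X_0) \xrightarrow{\sim} (a_m)_*\Omega_{X_0[m]}^{\bullet-m}[-m]$, which further reduces to $(a_m)_*\mathbb{C}_{X_0[m]}$ in degree $m$ by the holomorphic Poincaré lemma on the smooth stratum $X_0[m]$ (using that $a_m$ is a closed embedding, hence $(a_m)_*$ is exact). A direct local computation shows that $Gr^W_m\phi_m$, when composed with the Poincaré residue, sends $1 \otimes \mathbf{e}(y_1)\wedge\cdots\wedge\mathbf{e}(y_m)$ (for $y_i$ local generators of $\mathcal{M}$ from distinct components meeting at a point of $X_0[m]$) to the generating constant function of $(a_m)_*\mathbb{C}_{X_0[m]}$, up to the $(2\pi\sqrt{-1})^{-m}$ normalization.

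The main obstacle is the combinatorial identification $\overset{m}{\bigwedge}((a_1)_*\mathbb{C}_{X_0[1]}) \cong (a_m)_*\mathbb{C}_{X_0[m]}$ with correct signs. Fortunately, here $m \leq 2$: for $m = 0, 1$ the isomorphism is tautological; for $m = 2$, the stalk of $(a_1)_*\mathbb{C}_{X_0[1]}$ at a node $p$ is $\mathbb{C}^2$ (one copy per analytic branch) and $\overset{2}{\bigwedge}\mathbb{C}^2 \cong \mathbb{C}$ matches the stalk $\mathbb{C}$ of $(a_2)_*\mathbb{C}_{X_0[2]}$. The fixed ordering of the irreducible components of $X_0$ from Section \ref{LSKC} orients this identification compatibly with the signs of the Poincaré residue, completing the check that $Gr^W_m \phi$ is a quasi-isomorphism for each $m$.
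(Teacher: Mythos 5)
Your proposal is correct and follows essentially the same route as the paper: reduce to the graded pieces $Gr^W_m\phi$, identify $Gr^W_mK^\bullet$ via Proposition \ref{1.2.4} and compute its cohomology with Lemma \ref{1.2.2}, and compare with the Poincar\'e-residue computation of $Gr^W_m\Omega_X^\bullet(\log X_0)$ (which the paper cites from Steenbrink). Your additional checks --- that $\phi$ is a well-defined filtered morphism of complexes and that $Gr^W_m\phi$ actually induces the identification of cohomology sheaves rather than merely landing in an abstractly isomorphic object --- are details the paper leaves implicit, so your write-up is if anything slightly more complete.
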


We will reproduce the proof here. The proof is essentially the same with the one in \cite[section (2.8)]{st2}.

\begin{proof}
It suffices to show that $Gr^W_m(\phi_{\bullet}): Gr_m^WK^{\bullet}\otimes \mathbb{C}\rightarrow Gr_m^W(\Omega_X^{\bullet}(\log X_0)$ is a quasi-isomorphism for each $m$.

(1) $m=0$

$\mathcal{H}^0(Gr^W_0\Omega_X^{\bullet}(\log X_0))\cong\mathbb{C}_{X_0}$ and \ $\mathcal{H}^i(Gr^W_0\Omega_X^{\bullet}(\log X_0))=0$, for $i>0$, by Steebrink  \cite[corollary 1.9]{st1}.

By proposition \ref{1.2.4}, we have $$Gr_0^WK^{\bullet}\cong Kos^{2}(\mathcal{O}_{X_0}\overset{e^{2\pi \sqrt[]{-1}(\cdot)}}{\longrightarrow}\mathcal{O}_{X_0}^*\otimes \mathbb{Q})^{\bullet}.$$

Then by lemma \ref{1.2.2}, we have
$\mathcal{H}^0(Gr_0^WK^{\bullet})\cong\mathbb{Q}_{X_0}$ and $\mathcal{H}^i(Gr_0^WK^{\bullet})=0$, for $i>0$.

(2) $m=1$

$\mathcal{H}^1(Gr^W_1\Omega_X^{\bullet}(\log X_0))\cong(a_1)_*\mathbb{C}_{X_0[1]}$ and \ $\mathcal{H}^i(Gr^W_1\Omega_X^{\bullet}(\log X_0))=0$, for $i\neq1$, by Steebrink  \cite[corollary 1.9]{st1}.

By proposition \ref{1.2.4}, we have $$Gr_1^WK^{\bullet}\cong Kos^{1}(\mathcal{O}_{X_0}\overset{e^{2\pi \sqrt[]{-1}(\cdot)}}{\longrightarrow}\mathcal{O}_{X_0}^*\otimes \mathbb{Q})^{\bullet}[-1]\otimes \overset{1}{\bigwedge}(a_1)_*\mathbb{Q}_{X_0[1]}.$$

Then by lemma \ref{1.2.2}, we have
$\mathcal{H}^1(Gr_1^WK^{\bullet})\cong(a_1)_*\mathbb{Q}_{X_0[1]}$ and $\mathcal{H}^i(Gr_1^WK^{\bullet})=0$, for $i\neq 1$.

The argument for $m\geq 2$ is exactly the same.
\end{proof}

Let's construct Steenbrink's double complex in Steenbrink \cite{st2} over $\Q$. The original double complex in Steenbrink \cite{st2} is constructed over $\mathbb{Z}$ by using log geometry. Here we reconstruct it over $\mathbb{Q}$ by using the data (\ref{formula 2.2}) and the above results. Since after tensoring with $\Q$, the torsion in $\mathcal{M}_{X_0}^{gp}$ is already killed, so we don't need to do the operation as Steenbrink did in \cite[section 2.8 and 5.2]{st2}.

\begin{note}\label{2.2.2}
Let $K^{\bullet}$ be a complex. Denote $K^p(n)[m]:=(2\pi\sqrt[]{-1})^nK^{p+m}$. 
\end{note}

\noindent\textbf{Construction}\ ($\mathbb{Q}$-Steenbrink double complex).\ For the fixed parameter $t$, let $A_{t,\mathbb{Q}}^{p,q}=(K^{p+q+1}/W_pK^{p+q+1})(p+1)$ for $p,q\ge 0$ with differentials

(1) $d':A_{t,\mathbb{Q}}^{p,q}\rightarrow A_{t,\mathbb{Q}}^{p+1,q}$ 
induced by cup product with $2\pi\sqrt[]{-1}\tilde{t}$ 

(2) $d'':A_{t,\mathbb{Q}}^{p,q}\rightarrow A_{t,\mathbb{Q}}^{p,q+1}$
the differential $d$ of the kozul complex.

We also have the monodromy weight filtration $M_{\bullet}$ 
$$M_rA_{t,\mathbb{Q}}^{p,q}:= \text{the image of} \ \ W_{2p+r+1}K^{p+q+1}(p+1)\  \text{in} \ A_{t,\mathbb{Q}}^{p,q}.$$

Denote the total complex $Tot(A_{t,\mathbb{Q}}^{\bullet, \bullet})$ by $A_{t,\mathbb{Q}}^{\bullet}.$ From now on, for simplicity, we just denote $A_{t,\mathbb{Q}}^{\bullet}$ by $A_{\mathbb{Q}}^{\bullet}$, for the fixed parameter $t$. 

The morphism in theorem \ref{2.2.1} 
$$\phi_m: W_mK^q\rightarrow W_m\Omega_X^q(\log X_0)$$
induces a morphism of filtered double complexes 
$$\phi:(A_{\mathbb{Q}}^{\bullet, \bullet}, M_{\bullet})\rightarrow (A^{\bullet, \bullet}, M_{\bullet}).$$

\begin{thm}\label{2.2.3}
(1) $Gr^M_mA_{\mathbb{Q}}^{\bullet}\cong \underset{p\geq 0,-m}{\bigoplus}(Gr^W_{m+2p+1}K^{\bullet})(p+1)[1]$.

(2) $\phi: (A_{\mathbb{Q}}^{ \bullet}\otimes\mathbb{C}, M_{\bullet})\rightarrow(A^{\bullet}, M_{\bullet})$\textit{ is a filtered quasi-isomorphism.}

(3)\ \textit{We have the following commutative diagram}

\[\xymatrix{
&i^*Rk_*k^*K^{\bullet}\otimes\mathbb{C}\ar[d]&i^*K^{\bullet}\otimes\mathbb{C}\ar[l]\ar[d]\ar[r] &A_{\mathbb{Q}}^{\bullet}\otimes\mathbb{C}\ar[d] \\
(Nearby Cycle)\ar@{~>}[r]&i^*Rk_*k^*\mathbb{C}_X&i^*\Omega_X^{\bullet}(\log X_0)[\log t]\ar[l]\ar[r] &A^{\bullet},\\
}
\tag{2.3}\label{diag 2.3}
\] such that every morphism in the above diagram is a quasi-isomorphism. This means that the $\mathbb{Q}$-cohomological mixed Hodge complex $(A_{\mathbb{Q}}^{\bullet}, M_{\bullet}), (A^{\bullet}, M_{\bullet}, F^{\bullet})$ coincides with the one in Steenbrink \cite[section 4]{st1}.
\end{thm}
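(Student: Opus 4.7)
The plan is to treat the three parts in sequence, reducing everything to graded pieces of the filtrations and then invoking Theorem \ref{2.2.1} together with Proposition \ref{1.2.4}. For part (1), I would unwind the definitions: $Gr^M_m A_{\mathbb{Q}}^{p,q}$ is the image of $W_{2p+m+1}K^{p+q+1}(p+1)$ inside $(K^{p+q+1}/W_p K^{p+q+1})(p+1)$ modulo the image of $W_{2p+m}K^{p+q+1}(p+1)$. Because $W_p K^{p+q+1}$ is killed in the quotient, the piece is nonzero only when $2p+m\geq p$, i.e.\ $p\geq -m$, and in that range it equals $Gr^W_{2p+m+1}K^{p+q+1}(p+1)$. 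Summing over all $p\geq\max(0,-m)$ in the total complex and absorbing the index shift $q\mapsto p+q+1$ into a global $[1]$ then yields the claimed decomposition.

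For part (2), the parallel calculation on the analytic side gives the analogue $Gr^M_m A^{\bullet}\cong\bigoplus_{p\geq\max(0,-m)}Gr^W_{m+2p+1}\Omega_X^{\bullet}(\log X_0)[1]$, which is essentially Steenbrink's original computation. By construction, $\phi$ is compatible with $W_\bullet$ on the Koszul terms (it is built summand-wise from the $\phi_m$) and with both differentials: the Koszul differential $d$ corresponds to $d''$, while cup product with $2\pi\sqrt{-1}\tilde t$ corresponds to wedging with $\theta=f^*(dt/t)$ once the normalization factor $(2\pi\sqrt{-1})^{-q}$ in $\phi_m$ is absorbed into the Tate twist $(p+1)$. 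Thus $\phi$ induces on each $Gr^M_m$ a direct sum of the $\mathbb{C}$-linearizations of $Gr^W_k(\phi_\bullet)$, each of which is a quasi-isomorphism by Theorem \ref{2.2.1}; hence $\phi$ is a filtered quasi-isomorphism.

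For part (3), the right-hand square of (\ref{diag 2.3}) commutes by the very construction of $\phi$ on the double complex together with the natural projections from $K^{\bullet}$ and $\Omega_X^{\bullet}(\log X_0)$ onto the $p=0$ column. The left-hand square commutes because both horizontal maps are the natural adjunction $\mathrm{id}\to Rk_*k^*$ and $\phi$ is natural. For the quasi-isomorphism assertions: the bottom row is Steenbrink's classical identification of the nearby cycle complex first with $i^*\Omega_X^{\bullet}(\log X_0)[\log t]$ and then with $A^{\bullet}$; the rightmost vertical arrow is part (2); the middle vertical arrow is a consequence of Theorem \ref{2.2.1}, using that $i^*$ is exact for sheaves supported on $X_0$. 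The top row is then forced by two-out-of-three in the commutative squares to also consist of quasi-isomorphisms.

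The hard part will be the bookkeeping in part (3): making the quasi-isomorphism $i^*K^{\bullet}\otimes\mathbb{C}\simeq i^*Rk_*k^*\mathbb{C}_X$ precise without circular reasoning, and verifying that the Tate twist and sign conventions align so that $\phi$ is genuinely a morphism of cohomological mixed Hodge complexes in Deligne's sense. The cleanest strategy is to factor both the source and the target through $i^*\Omega_X^{\bullet}(\log X_0)[\log t]$, using $\phi$ composed with the standard inclusion on one side, and Steenbrink's comparison from \cite{st1, st2} on the other, so that the new rational structure fits naturally over the already established complex one.
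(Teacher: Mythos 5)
Your overall strategy coincides with the paper's: reduce everything to $W$-graded pieces, invoke Theorem \ref{2.2.1}, take the bottom row of (\ref{diag 2.3}) from Steenbrink/Katz, and deduce the top row by two-out-of-three. Parts (2) and (3) of your argument track the paper's proof closely --- the paper handles (2) by observing that $Gr^M_mA_{\mathbb{Q}}^{\bullet}$ is acyclic for $m\neq -1,0,1$ and comparing the remaining pieces with $Gr^W_{m+2p+1}\Omega_X^{\bullet}(\log X_0)[1]$ via Steenbrink's Lemma 4.18, which is exactly your ``analytic analogue'' of part (1); for (3) it likewise cites Katz for the bottom row, part (2) for the rightmost vertical arrow, and Theorem \ref{2.2.1} for the first and middle vertical arrows.

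The one substantive omission is in part (1). Identifying $Gr^M_mA_{\mathbb{Q}}^{p,q}$ with $Gr^W_{2p+m+1}K^{p+q+1}(p+1)$ for $p\geq\max(0,-m)$ is only a statement about the individual bigraded pieces; to conclude that the total \emph{complex} $Gr^M_mA_{\mathbb{Q}}^{\bullet}$ splits as the asserted direct sum of shifted columns, you must also check that the horizontal differential $d'$ (cup product with $2\pi\sqrt{-1}\,\tilde{t}$) vanishes on the associated graded. This is where the content lies: wedging with $\tilde{t}$ raises the weight by exactly one, so $d'$ carries $W_{2p+m+1}K^{p+q+1}$ into $W_{2p+m+2}K^{p+q+2}$, whose image in $A_{\mathbb{Q}}^{p+1,q}$ is precisely $M_{m-1}A_{\mathbb{Q}}^{p+1,q}$ (the $M$-filtration advances the weight by two per unit increase of $p$, while $d'$ only advances it by one); hence $d'$ is zero on $Gr^M_m$. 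The paper records exactly this (``all the horizontal morphisms in diagram (\ref{diag 2.5}) are zero''). Without that line you have a decomposition of bigraded objects but not of complexes, and the summand-by-summand reduction of part (2) to Theorem \ref{2.2.1} would not be justified. The fix is one sentence, but it is the essential computation of part (1); the rest of your proposal is sound.
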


\begin{proof} (1) follows from direct computations. 

$M_rA_{\mathbb{Q}}^{\bullet,\bullet}=$ 
\[
\xymatrix{
&0&0 \\
&\frac{W_{r+1}K^2}{W_0K^2}(1)\ar[u]\ar[r]&\frac{W_{r+3}K^3}{W_1K^3}(2)\ar[u]\ar[r]&\cdots\\
(0,0)\ar@{~>}[r] &\frac{W_{r+1}K^1}{W_0K^1}(1)\ar[u]\ar[r]&\frac{W_{r+3}K^2}{W_1K^2}(2)\ar[u]\ar[r] &\cdots\\
&0\ar[u]&0\ar[u].\\
}
\tag{2.4}\label{diag 2.4}
\]
Thus $Gr_r^MA_{\mathbb{Q}}^{\bullet,\bullet}$ equals to the following double complex
\[
\xymatrix{
&0&0 \\
&\frac{W_{r+1}K^2}{W_rK^2}(1)\ar[u]\ar[r]&\frac{W_{r+3}K^3}{W_{r+2}K^3}(2)\ar[u]\ar[r]&\cdots\\
(0,0)\ar@{~>}[r] &\frac{W_{r+1}K^1}{W_rK^1}(1)\ar[u]\ar[r]&\frac{W_{r+3}K^2}{W_{r+2}K^2}(2)\ar[u]\ar[r] &\cdots\\
&0\ar[u]&0\ar[u].\\
}
\tag{2.5}\label{diag 2.5}
\]
Note that all the horizontal morphisms in diagram (\ref{diag 2.5}) are zero. Thus the total complex is a direct sum of vertical complexes with a shift by 1, i.e., $$Gr^M_mA_{\mathbb{Q}}^{\bullet}\cong \underset{p\geq 0,-m}{\bigoplus}(Gr^W_{m+2p+1}K^{\bullet})(p+1)[1].$$

(2) First, claim that $Gr_m^MA_{\mathbb{Q}}^{\bullet}$ is quasi-isomorphic to 0 for $m\neq -1,0,1$.

In fact, by theorem \ref{2.2.1}, $Gr_{m+2p+1}^WK^{\bullet}(p+1)[1]\otimes \mathbb{C}$ is quasi-isomorphic to the complex  $Gr_{m+2p+1}^W\Omega^{\bullet}_X(\log X_0)[1]$. Since $Gr_r^W\Omega^{\bullet}_X(\log X_0)=0$, for $r\neq 0,1,2$, $Gr_m^MA_{\mathbb{Q}}^{\bullet}$ is quasi-isomorphic to 0 for $m\neq -1,0,1$ by theorem \ref{2.2.3} (1).

For $r=-1$, $$Gr^M_{-1}A_{\mathbb{Q}}^{\bullet}\otimes\mathbb{C}\cong Gr^W_{2}K^{\bullet}(2)[1]\otimes\mathbb{C}\cong Gr^W_{2}\Omega^{\bullet}_X(\log X_0)[1]\cong Gr^M_{-1}A^{\bullet}.$$ The last isomorphism follows from Steenbrink \cite[lemma 4.18]{st1}. For $r=0,1$, the calculations are the same.

(3) 
Note that the bottom row of diagram (2.3) contains two quasi-isomorphisms constructed by N. Katz in Steenbrink \cite[section 2.6]{st1}. The last vertical arrow in diagram (\ref{diag 2.3}) is a quasi-isomorphism by theorem \ref{2.2.3} (2). The first and middle vertical arrows are quasi-isomorphisms by theorem \ref{2.2.1}, for $n=0$. Thus the top row contains two quasi-isomorphisms.
\end{proof}

\section{1-Motives Associated to Abstract Mixed Hodge Structures}\label{abstract 1-motive}

In this section, we will briefly recall the theory of Deligne 1-motives. Following from \cite[(10.1.3)]{de1}, we have an equivalence of categories as mentioned in the introduction:
\[
\left\{
\begin{array}{ll}
\text{\textit{$\mathbb{Z}$-mixed Hodge structure}}\\ (H_{\mathbb{Z}}, W_{\bullet}, F^{\bullet})
 \text{ of type}\ 
\{(-1,-1),\\
(-1,0), (0,-1), (0,0)\}\ \   \text{with}\\  
Gr^W_{-1}H\ \text{a polarized pure Hodge}\\ \text{structure of weight} -1.
\end{array}
\right\}
\Longleftrightarrow
\left\{
\begin{array}{ll}
\text{\textit{1-motive}}\ [L\overset{\mu}{\rightarrow} G], \ \text{with}\  L\ \text{a}\\ \text{free abelian group of finite rank},\\ \text{and}\  0\rightarrow
T\rightarrow G\rightarrow A\rightarrow 0\   
\text{an}\\ \text{extension of an abelian variety}\\ $A$\  \text{by a torus}\  T.
\end{array}
\right\}
 \]
  
(1) For an arbitrary 1-motive $M$
\[
\xymatrix{
&&&L\ar[d]^{\mu}\\
&0\ar[r]&T\ar[r]&G\ar[r]&A\ar[r]&0, \\
}
\]
the corresponding mixed Hodge structure is constructed as follows
\[
\xymatrix{
&0\ar[r]&H_1(G)\ar[r]&\text{Lie}G\ar[r]^{exp}&G\ar[r]&0\\
&0\ar[r]&H_1(G)\ar@{=}[u]\ar[r]&H_{\mathbb{Z}}\ar@{-->}[r]\ar@{-->}[u]^{\alpha}&L\ar[r]\ar[u]^{\mu}&0, \\
}
\]
where the square on the right is the pullback of morphisms $\mu$ and $exp$.

$H_{\mathbb{Z}}$ is the integral lattice of the corresponding mixed Hodge structure. Denote $H:=H_{\mathbb{Z}}\otimes\mathbb{C}$. \  The Weight filtrations are $W_{-1}H_{\mathbb{Z}}:=H_1(G)$, $W_{-2}H_{\mathbb{Z}}:=\ker\{H_1(G)\rightarrow H_1(A)\}$. The Hodge filtration is $F^0H:=\ker\{\alpha\otimes\mathbb{C}: H\rightarrow \text{Lie}G\}$.

(2) For the other direction, we start from a mixed Hodge structure $(H_{\mathbb{Z}}, W_{\bullet}, F^{\bullet})$ of the above type, to construct the corresponding 1-motive.

Firstly, we introduce a construction given by Deligne \cite{de1}.

Since $Gr_{-1}^WH$ is polarizable, the complex torus
$A=Gr^W_{-1}H_{\mathbb{C}}/(F^0Gr_{-1}^WH_{\mathbb{C}}+Gr_{-1}^WH_{\mathbb{Z}})$
is an abelian variety. Let $T$ be the torus of the character group of the dual of $Gr_{-2}^W(H_{\mathbb{Z}})$. 
The complex analytic group $G=W_{-1}H_{\mathbb{C}}/(F^0W_{-1}H_{\mathbb{C}}+W_{-1}H_{\mathbb{Z}})$
is an extension of $A$ by $T$, as a semi-abelian variety.  \ Let $L=Gr_0^WH_{\mathbb{Z}}$. Then the corresponding 1-motive is the map $\mu: L\rightarrow G$ making the following diagram commutative:
\[
\xymatrix{
&0\ar[r]&W_{-1}H_{\mathbb{Z}}\ar[r]&W_{-1}H_{\mathbb{C}}/F^0W_{-1}H_{\mathbb{C}}\ar[r]&G\ar[r]&0\\
&&&H_{\mathbb{C}}/F^0\ar[u]^{\cong}\\
&0\ar[r]&W_{-1}H_{\mathbb{Z}}\ar@{=}[uu]\ar[r]&H_{\mathbb{Z}}\ar[r]\ar[u]&L\ar[r]\ar[uu]^{\mu}&0.\\
}
\]

\begin{rmk}\label{3.0.1}
When we consider $\mathbb{Q}$-mixed Hodge structures, we get 1-motives up to isogeny, i.e., we only determine the morphism $\mu$ upto tensoring with $\Q$ in the above diagram.
\end{rmk}

Next we give another construction by J. Carlson, which will not be use it in the following sections.  Recall the following theorem about an extension of mixed Hodge structures.

\begin{thm}[{\cite[theorem 3.31]{pest}}]
Let $A$ and $B$ be $\mathbb{Q}$-mixed Hodge structures, then there is a canonical isomorphism
$${\Ext}_{MHS}^1(A, B)\cong W_0\Hom(A, B)_{\mathbb{C}}/(F^0W_0\Hom(A, B)_{\mathbb{C}}+W_0\Hom(A,B)_{\mathbb{Q}}).$$
\end{thm}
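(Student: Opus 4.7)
The plan is to prove Carlson's formula by exhibiting explicit maps in both directions and verifying they are mutually inverse. For the \emph{forward direction}, given an extension $0 \to B \to H \to A \to 0$ of $\mathbb{Q}$-MHS, I would first choose a $\mathbb{Q}$-linear section $s_{\mathbb{Q}}: A_{\mathbb{Q}} \to H_{\mathbb{Q}}$ respecting the weight filtration $W_\bullet$, which exists by lifting a basis of $A_{\mathbb{Q}}$ adapted to $W_\bullet$. Next I would choose a $\mathbb{C}$-linear section $s_F: A_{\mathbb{C}} \to H_{\mathbb{C}}$ respecting both $W_\bullet$ and $F^\bullet$; this exists by Deligne's canonical bigrading $H_{\mathbb{C}} = \bigoplus I_H^{p,q}$, $A_{\mathbb{C}} = \bigoplus I_A^{p,q}$, whose functoriality under strict MHS morphisms yields surjections $I_H^{p,q} \twoheadrightarrow I_A^{p,q}$ that can be split piecewise and summed. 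Because both maps are sections of $H \to A$, the difference $\psi := s_F - s_{\mathbb{Q}} \otimes 1_{\mathbb{C}}$ lands in $B_{\mathbb{C}}$, and because both preserve $W_\bullet$, $\psi \in W_0 \Hom(A,B)_{\mathbb{C}}$. Define $\Phi([H]) := [\psi]$ in the stated quotient.

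For \emph{well-definedness}, a different choice $s'_{\mathbb{Q}}$ yields $s'_{\mathbb{Q}} - s_{\mathbb{Q}} \in W_0 \Hom(A, B)_{\mathbb{Q}}$ (it lands in $B$ and preserves $W_\bullet$), while a different choice $s'_F$ yields $s'_F - s_F \in F^0 W_0 \Hom(A, B)_{\mathbb{C}}$; hence the class $[\psi]$ is independent of the auxiliary choices.

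For the \emph{inverse}, given $\psi \in W_0 \Hom(A, B)_{\mathbb{C}}$, I would set $H_{\mathbb{Q}} := A_{\mathbb{Q}} \oplus B_{\mathbb{Q}}$ with $W_n H := W_n A \oplus W_n B$, and define the twisted Hodge filtration
$$F^p H_{\mathbb{C}} := \{\,(a,\; \psi(a) + b) : a \in F^p A_{\mathbb{C}},\ b \in F^p B_{\mathbb{C}}\,\}.$$
That this defines a MHS reduces to checking purity on each $\Gr^W_n H$, which holds because $\psi$ preserves $W_\bullet$, so $\Gr^W_n \psi = 0$ and hence $\Gr^W_n H = \Gr^W_n A \oplus \Gr^W_n B$ as pure Hodge structures of weight $n$. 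The inclusion $B \hookrightarrow H$ and projection $H \twoheadrightarrow A$ form the extension, and applying $\Phi$ with $s_{\mathbb{Q}}(a) = (a,0)$ and $s_F(a) = (a, \psi(a))$ recovers $[\psi]$. Conversely, if $\Phi([H]) = 0$ then writing $\psi = \beta \otimes 1 + \gamma$ with $\beta \in W_0 \Hom(A,B)_{\mathbb{Q}}$, $\gamma \in F^0 W_0 \Hom(A,B)_{\mathbb{C}}$ allows us to replace $s_F$ by $s_F - \gamma$ and $s_{\mathbb{Q}}$ by $s_{\mathbb{Q}} + \beta$, after which the $\mathbb{C}$-section equals the complexification of the $\mathbb{Q}$-section, giving a splitting of $H$ in $\mathbb{Q}$-MHS.

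The main obstacle is justifying the existence of a section $s_F$ preserving both $W_\bullet$ and $F^\bullet$ simultaneously: without the simultaneous $W$-compatibility, the difference $\psi$ could not be asserted to lie in $W_0 \Hom(A,B)_{\mathbb{C}}$, and the entire formulation collapses. This is precisely where Deligne's bigrading $I^{p,q}$ and its strict functoriality enter as the key ingredient. A secondary verification, routine but requiring care, is that the twisted $F^\bullet$ in the inverse construction is indeed opposed to $\overline{F}^\bullet$ on $\Gr^W_n H$, which again follows because $\Gr^W_n \psi = 0$ reduces the check to the direct sum MHS on $\Gr^W_n A \oplus \Gr^W_n B$.
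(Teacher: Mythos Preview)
The paper does not prove this theorem; it is quoted from \cite[Theorem~3.31]{pest} without argument. What the paper does supply, immediately after Corollary~\ref{3.0.3}, is the explicit description of the forward map in the separated case: choose a $\mathbb{Q}$-retraction $r: H_{\mathbb{Q}} \to B_{\mathbb{Q}}$ and a Hodge-filtered $\mathbb{C}$-section $\sigma_F$, and send the extension to the class of $r_{\mathbb{C}} \circ \sigma_F$. This agrees with your map $\psi = s_F - s_{\mathbb{Q}}\otimes 1$: if $s_{\mathbb{Q}}$ is the section corresponding to $r$ (so $r\circ s_{\mathbb{Q}} = 0$ and $\beta\circ r + s_{\mathbb{Q}}\circ\alpha = \mathrm{id}_H$), then $r_{\mathbb{C}}\circ\sigma_F$ is exactly $\sigma_F - s_{\mathbb{Q}}$ read inside $B_{\mathbb{C}}$.

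Your proposal is the standard Carlson argument and is correct. The point you single out as the main obstacle---producing a section $s_F$ compatible with \emph{both} $W_\bullet$ and $F^\bullet$ via the functoriality of Deligne's bigrading $I^{p,q}$---is indeed the crux, and your treatment of it is right. The inverse construction by twisting the Hodge filtration and the verification that $\Gr^W_n\psi = 0$ forces $\Gr^W_n H \cong \Gr^W_n A \oplus \Gr^W_n B$ as pure Hodge structures are also correct. One minor omission: you do not explicitly check that $\Phi$ is additive for the Baer sum, nor that it depends only on the congruence class of the extension, but both are routine (the latter follows immediately from your well-definedness argument, since an isomorphism of extensions carries sections to sections).
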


\begin{cor}\label{3.0.3}
Note that if $A$ and $B$ are separated mixed Hodge structures (i.e., the weights of B are less than the weights of A), we have
$${\Ext}_{MHS}^1(A, B)\cong \Hom(A, B)_{\mathbb{C}}/(F^0\Hom(A, B)_{\mathbb{C}}+\Hom(A,B)_{\mathbb{Q}}).$$
\end{cor}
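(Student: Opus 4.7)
The plan is to derive this corollary from the preceding theorem by observing that, under the separation hypothesis, $W_0\Hom(A,B)$ coincides with the whole space $\Hom(A,B)$, at which point the stated formula drops out immediately from the one in the theorem.

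First, I would recall the standard convention for the weight filtration on the internal Hom between two mixed Hodge structures, namely
$$W_n \Hom(A,B) = \{\, f: A \to B \mid f(W_k A) \subseteq W_{k+n} B \text{ for all } k\,\},$$
so that $W_0\Hom(A,B)$ consists exactly of those $\mathbb{Q}$-linear maps that preserve the weight filtrations. Equivalently, after identifying $\Hom(A,B)$ with $A^\vee \otimes B$, the weights occurring in $\Hom(A,B)$ are precisely the differences $j-i$ with $Gr^W_i A \neq 0$ and $Gr^W_j B \neq 0$.

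Next I would establish, under the separation hypothesis, the equality $W_0\Hom(A,B) = \Hom(A,B)$. Let $a$ denote the smallest weight of $A$ and $b$ the largest weight of $B$, so that by assumption $b < a$. Then $W_k A = 0$ for $k < a$ while $W_k B = B$ for $k \geq b$. For any $\mathbb{Q}$-linear $f: A \to B$ and any integer $k$: if $k < a$ then $f(W_k A) = 0 \subseteq W_k B$, and if $k \geq a$ then $k > b$, so $W_k B = B \supseteq f(W_k A)$. Thus every $f$ lies in $W_0\Hom(A,B)$, giving the asserted equality, and hence also $F^0 W_0 \Hom(A,B)_{\mathbb{C}} = F^0 \Hom(A,B)_{\mathbb{C}}$ and $W_0 \Hom(A,B)_{\mathbb{Q}} = \Hom(A,B)_{\mathbb{Q}}$. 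Substituting these three equalities into the formula of the preceding theorem yields the statement of the corollary. There is no real obstacle here; the only point requiring care is keeping the sign convention on the weight filtration of the internal Hom consistent, so that the hypothesis \emph{weights of $B$ less than weights of $A$} correctly forces all weights of $\Hom(A,B)$ to be negative.
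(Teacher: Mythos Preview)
Your proposal is correct and is precisely the intended derivation: the paper states the corollary without proof, implicitly relying on the observation that under the separation hypothesis all weights of $\Hom(A,B)$ are negative, so $W_0\Hom(A,B)=\Hom(A,B)$ and the formula reduces to the one in the preceding theorem. Your verification of this via the filtration condition $f(W_kA)\subseteq W_kB$ is clean and complete.
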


In fact, the isomorphism is given explicitly as follows.

Let
\[0\rightarrow B_{\mathbb{Q}}\overset{\beta}{\rightarrow} H_{\mathbb{Q}}\overset{\alpha}{\rightarrow} A_{\mathbb{Q}}\rightarrow 0\]
be a separated extension of $\mathbb{Q}$-mixed Hodge structures. Choose any retraction $r: H_{\mathbb{Q}}\rightarrow B_{\mathbb{Q}},$ i.e., $r\circ\beta=id_B$ and a section $\sigma_F$ of $\alpha_{\mathbb{C}}: H_{\mathbb{C}}\rightarrow A_{\mathbb{C}}$ preserving Hodge filtration, then the above extension corresponds to the element represented by $r\otimes\mathbb{C}\circ \sigma_F$ in corollary \ref{3.0.3}.\ \  If $B$ is a mixed Hodge structure of type $\{(-1,-1),(-1,0),(0,-1)\}$ and $A$ is a mixed Hodge structure of type $\{(0,0)\}$, the 1-motive upto isogeny of the above extension $H$ is given by the morphism:
\[A_{\mathbb{Q}}\overset{\sigma_F|_{A_{\mathbb{Q}}}}{\longrightarrow} H_{\mathbb{C}}\overset{r\otimes{\mathbb{C}}}{\longrightarrow} B_{\mathbb{C}}/(F^0B_{\mathbb{C}}+B_{\mathbb{Q}})\] by Carlson \cite[proposition 3, lemma 4]{carl2}.

\section{1-Motives upto Isogeny Associated to Semi-Stable Degenerations of Curves}

\subsection{Abstract construction for 1-motives}\label{abs}

For the family $f:X\rightarrow\Delta$ and parameter $t$ as that in the introduction, the limit mixed Hodge structure $$((\mathbb{H}^1(A_{\mathbb{Q}}^{\bullet}), W_{\bullet}), (\mathbb{H}^1(A^{\bullet}), W_{\bullet}, F^{\bullet}))$$ constructed in section \ref{double complex} is  of type $\{(0,0),(0,1),(1,0), (1,1)\}$. After taking Tate twist, it is of type $\{(-1,-1),(0,-1)$, $(-1,0), (0,0)\}$. Also, by the following lemma \ref{4.1.2}, $Gr^W_{-1}\mathbb{H}^1(A^{\bullet})$ is polarized. Then we have an associated 1-motive $\mu_t: L\rightarrow G$ upto isogeny as explained in section \ref{abstract 1-motive}. In this subsection, we give an abstract description of the 1-motive map $\mu_t$ up to isogeny.\ From now on we will only write $\mathbb{H}^1(A^{\bullet})$ for the $\mathbb{Q}$-limit mixed Hodge structure and write $\mathbb{H}^1(A_{\mathbb{Q}}^{\bullet})$ for its underling $\mathbb{Q}$ structure.

\begin{lemma}\label{4.1.1}
$Gr_2^W\mathbb{H}^1(A_{\mathbb{Q}}^{\bullet})\cong \ker\{H^0(X_0[2], \mathbb{Q}(1))\rightarrow H^2(X_0[1], \mathbb{Q}(1))\}$, where $\mathbb{Q}(1)$ means $2\pi\sqrt[]{-1}\mathbb{Q}$.
\end{lemma}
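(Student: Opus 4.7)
The plan is to read off $Gr^W_2\mathbb{H}^1(A_{\Q}^\bullet)$ from the weight spectral sequence attached to $(A_{\Q}^\bullet, M_\bullet)$, using Theorem \ref{2.2.3}(1) to convert each $Gr^M_m A_{\Q}^\bullet$ into an easily computable complex.

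Because $(A_{\Q}^\bullet, M_\bullet)$ is the $\Q$-part of a cohomological mixed Hodge complex, the weight spectral sequence
\[_W E_1^{-r,\,n+r} = \mathbb{H}^n(Gr^M_r A_{\Q}^\bullet) \;\Longrightarrow\; \mathbb{H}^n(A_{\Q}^\bullet)\]
degenerates at $E_2$ by Deligne's theorem. The shift convention $Gr^W_k\mathbb{H}^n = Gr^M_{k-n}\mathbb{H}^n$ for limit MHS identifies $Gr^W_2\mathbb{H}^1 = Gr^M_1\mathbb{H}^1 = E_\infty^{-1,2} = E_2^{-1,2}$; this is consistent with Theorem \ref{2.2.3}(2), where the only nonzero pieces $Gr^M_m A_{\Q}^\bullet$ occur for $m \in \{-1,0,1\}$, matching the weights $\{0,1,2\}$ on $\mathbb{H}^1$.

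Next I will compute the three $E_1$ entries that determine $E_2^{-1,2}$. Since $W_m K^q = K^q$ as soon as $m \geq q$, we have $Gr^W_m K^\bullet = 0$ for $m > 2$, so every summand except the first in the decomposition of Theorem \ref{2.2.3}(1) vanishes, giving
\[Gr^M_1 A_{\Q}^\bullet \simeq Gr^W_2 K^\bullet(1)[1], \quad Gr^M_0 A_{\Q}^\bullet \simeq Gr^W_1 K^\bullet(1)[1], \quad Gr^M_2 A_{\Q}^\bullet = 0\]
(the last making $E_1^{-2,2} = 0$). Applying Proposition \ref{1.2.4} with $G = \textbf{e}(\mathcal{O}_{X_0})$ and $F = \mathcal{M}$ (so that $\varepsilon_G$ is surjective with kernel $\Q_{X_0}$ and $F/G = (a_1)_*\Q_{X_0[1]}$), together with Lemma \ref{1.2.2} and the local identification $\bigwedge^2 (a_1)_*\Q_{X_0[1]} \cong (a_2)_*\Q_{X_0[2]}$ (nonzero only at nodes, where the two branches give $\Q^2$), yields quasi-isomorphisms $Gr^W_1 K^\bullet \simeq (a_1)_*\Q_{X_0[1]}[-1]$ and $Gr^W_2 K^\bullet \simeq (a_2)_*\Q_{X_0[2]}[-2]$. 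Taking hypercohomology then produces $E_1^{-1,2} = H^0(X_0[2], \Q(1))$ and $E_1^{0,2} = H^2(X_0[1], \Q(1))$, whence $E_2^{-1,2} = \ker\{d_1\colon H^0(X_0[2], \Q(1)) \to H^2(X_0[1], \Q(1))\}$.

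The remaining task is to identify the $E_1$-differential $d_1$, which by construction is the connecting homomorphism of the short exact sequence $0 \to Gr^M_0 A_{\Q}^\bullet \to M_1/M_{-1}\,A_{\Q}^\bullet \to Gr^M_1 A_{\Q}^\bullet \to 0$. I anticipate this to be the main technical hurdle: one must unwind the connecting map through the Koszul quasi-isomorphisms coming from Proposition \ref{1.2.4} and check, by a local computation at each node using the explicit formula for the Koszul differential in Lemma \ref{1.2.1}, that it agrees with the natural Gysin-type map associated to the two preimages in $X_0[1]$ of each point of $X_0[2]$. Once this identification is made, the combination of $E_2$-degeneration and the computations above yields the stated isomorphism.
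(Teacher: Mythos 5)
Your proposal is correct and follows essentially the same route as the paper: the monodromy weight spectral sequence for $(A_{\mathbb{Q}}^{\bullet},M_{\bullet})$ with $E_2$-degeneration, the vanishing of $Gr^M_2A_{\mathbb{Q}}^{\bullet}$, and the identification of $\mathbb{H}^1(Gr^M_1A_{\mathbb{Q}}^{\bullet})$ and $\mathbb{H}^2(Gr^M_0A_{\mathbb{Q}}^{\bullet})$ with $H^0(X_0[2],\mathbb{Q}(1))$ and $H^2(X_0[1],\mathbb{Q}(1))$ via Theorem \ref{2.2.3}(1) together with the Koszul computations of Proposition \ref{1.2.4} and Lemma \ref{1.2.2}. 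The only divergence is your final paragraph: the paper does not identify the $d_1$-differential explicitly, and indeed the statement as given only asserts an isomorphism with the kernel of the (unnamed) spectral-sequence differential, so that step is not needed here.
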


\begin{proof} Consider the monodromy weight spectral sequence 
\begin{equation*}
E_1^{-m,1+m}=\mathbb{H}^1(Gr^{M}_mA_{\mathbb{Q}}^{\bullet})\Rightarrow \mathbb{H}^1(A_{\mathbb{Q}}^{\bullet})
\end{equation*}
which degenerates at $E_2$ page, i.e., \begin{equation*}
    \begin{aligned}
E_2^{-m,1+m}=&\text{Cohomology of} \ \{E_1^{-m-1,1+m}\rightarrow {E}_1^{-m,1+m}\rightarrow {E}_1^{-m+1,1+m}\}\\
=&{E}_{\infty}^{-m,1+m}\\
=&Gr^W_{m+1}\mathbb{H}^1(A_{\mathbb{Q}}^{\bullet}).
\end{aligned}
  \end{equation*}

\noindent Thus 

\begin{equation*}
    \begin{aligned}
Gr^W_{2}\mathbb{H}^1(A_{\mathbb{Q}}^{\bullet})=& \text{Cohomology of}\  \{E_1^{-2,2}\rightarrow {E}_1^{-1,2}\rightarrow {E}_1^{0,2}\}\\
=& \text{Cohomology of } \{\mathbb{H}^0(Gr^{M}_2A_{\mathbb{Q}}^{\bullet})\rightarrow \mathbb{H}^1(Gr^{M}_{1}A_{\mathbb{Q}}^{\bullet})\rightarrow \mathbb{H}^2(Gr^{M}_{0}A_{\mathbb{Q}}^{\bullet})\}.
    \end{aligned}
  \end{equation*}
    
\noindent By theorem \ref{2.2.3}, $$\mathbb{H}^0(Gr^{M}_2A_{\mathbb{Q}}^{\bullet})=0,\  \mathbb{H}^1(Gr^{M}_{1}A_{\mathbb{Q}}^{\bullet})=\mathbb{H}^1(Gr^W_2K^{\bullet}(1)[1])=H^0(X_0[2], \mathbb{Q}(1)),$$ and 
$$\mathbb{H}^2(Gr^{M}_{0}A_{\mathbb{Q}}^{\bullet})=\mathbb{H}^2(Gr^W_1K^{\bullet}(1)[1])=H^2(X_0[1], \mathbb{Q}(1)).$$ Thus $$L\cong\ker\{H^0(X_0[2], \mathbb{Q}(1))\rightarrow H^2(X_0[1], \mathbb{Q}(1))\}.$$
\end{proof}

Upto isogeny, we can take the free abelian group $L$ in the associated 1-motive $\mu_t: L\rightarrow G$ of $\mathbb{H}^1(A^{\bullet})$ to be $$L=\ker\{H^0(X_0[2], \mathbb{Z}(1))\rightarrow H^2(X_0[1], \mathbb{Z}(1))\}.$$

\begin{lemma}[{\cite[(4.27)]{st1}}]\label{4.1.2}
The sub-$\mathbb{Q}$-mixed Hodge structure $W_1\mathbb{H}^1(A^{\bullet})$ is isomorphic to the canonical mixed Hodge structure $H^1(X_0)$ of the singular curve $X_0$
\end{lemma}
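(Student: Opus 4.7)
My plan is to realise $W_1\mathbb{H}^1(A^\bullet)$ as $\mathbb{H}^1(M_0A^\bullet)$ and to identify the subcomplex $M_0A^\bullet$ with Deligne's cohomological mixed Hodge complex of the nodal curve $X_0$ built from the simplicial resolution $X_0[1]\rightrightarrows X_0[2]$.

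The first step is a direct calculation of $M_0A^{p,q}$ on the $\mathbb{C}$-side. Since $\Omega_X^k=0$ for $k>2$ only three pieces are non-zero, and the Poincar\'e residue isomorphisms $Gr^W_m\Omega_X^k(\log X_0)\cong (a_m)_*\Omega^{k-m}_{X_0[m]}$ give
$$M_0A^{0,0}\cong (a_1)_*\mathcal{O}_{X_0[1]},\quad M_0A^{0,1}\cong (a_1)_*\Omega^1_{X_0[1]},\quad M_0A^{1,0}\cong (a_2)_*\mathcal{O}_{X_0[2]}.$$
Under these identifications the vertical differential $d''$ becomes the exterior derivative on $X_0[1]$, since residues commute with $d$. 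For the horizontal differential $d'=\theta\wedge(-)$ with $\theta=f^*(dt/t)$, a local computation at each node is needed: writing $uv=t\circ f$ so that $\theta=du/u+dv/v$, and lifting $(g_1,g_2)\in (a_1)_*\mathcal{O}_{X_0[1]}$ to $g_1\,du/u+g_2\,dv/v$, one obtains
$$\theta\wedge(g_1\,du/u+g_2\,dv/v)=(g_2-g_1)\,du/u\wedge dv/v,$$
whose class in $Gr^W_2\Omega_X^2(\log X_0)\cong (a_2)_*\mathcal{O}_{X_0[2]}$ is $(g_2-g_1)(p)$. Thus $d'$ is exactly the simplicial ``difference-of-restrictions'' map, and $M_0A^\bullet$ is the $\mathbb{C}$-part of Deligne's cohomological MHC for $X_0$.

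The parallel analysis on the rational side uses the Koszul model $A^{p,q}_{\mathbb{Q}}=(K^{p+q+1}/W_pK^{p+q+1})(p+1)$ together with the quasi-isomorphisms $Gr^W_mK^\bullet\simeq (a_m)_*\mathbb{Q}_{X_0[m]}[-m]$ obtained in the proof of Theorem \ref{2.2.1}. Since the Tate shifts of Notation \ref{2.2.2} are mere rescalings of the rational lattice and do not change the underlying weights, $M_0A^\bullet_{\mathbb{Q}}$ is quasi-isomorphic, compatibly with $F^\bullet$, to the $\mathbb{Q}$-MHC of $X_0$. Consequently $(M_0A^\bullet_{\mathbb{Q}},M_0A^\bullet,F^\bullet)$ is a sub cohomological MHC of $(A^\bullet_{\mathbb{Q}},A^\bullet,F^\bullet)$ whose hypercohomology is the canonical Deligne MHS on $H^\bullet(X_0)$; in particular $\mathbb{H}^1(M_0A^\bullet)\cong H^1(X_0)$ as MHS.

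To conclude I would invoke $W_1\mathbb{H}^1(A^\bullet)=M_0\mathbb{H}^1(A^\bullet)=\mathrm{image}\bigl(\mathbb{H}^1(M_0A^\bullet)\to\mathbb{H}^1(A^\bullet)\bigr)$, which follows from the monodromy weight spectral sequence as set up in the proof of Lemma \ref{4.1.1}. In our setting $A^\bullet/M_0A^\bullet=Gr^M_1A^\bullet\simeq Gr^W_2K^\bullet(1)[1]$, and $\mathbb{H}^0$ of the latter vanishes because $Gr^W_2K^\bullet$ is concentrated in cohomological degree~$2$, so the long exact sequence attached to $0\to M_0A^\bullet\to A^\bullet\to Gr^M_1A^\bullet\to 0$ forces $\mathbb{H}^1(M_0A^\bullet)\hookrightarrow \mathbb{H}^1(A^\bullet)$ to be injective. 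Combined with the previous paragraph this yields $W_1\mathbb{H}^1(A^\bullet)\cong\mathbb{H}^1(M_0A^\bullet)\cong H^1(X_0)$ as mixed Hodge structures, proving the lemma. The main technical obstacle I anticipate is the local sign and coefficient analysis identifying $d'$ with the simplicial differential of $X_0[1]\rightrightarrows X_0[2]$, together with ensuring that the Tate-twist bookkeeping on the rational side yields an honest isomorphism of MHS rather than merely of filtered vector spaces.
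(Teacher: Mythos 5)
Your proof is correct and follows essentially the same route as the paper: the subcomplex $M_0A^{\bullet,\bullet}$ you work with coincides term-by-term with the paper's $\ker v$ (the kernel of the projection inducing the monodromy operator $N$), and both arguments identify it with Deligne's cohomological mixed Hodge complex of $X_0$ via the Poincar\'e residue, with the same local computation of $d'$ at the nodes. Your explicit verification that $\mathbb{H}^1(M_0A^{\bullet})\to\mathbb{H}^1(A^{\bullet})$ is injective (via $\mathbb{H}^0(Gr^M_1A^{\bullet})=0$) is a detail the paper leaves implicit, but it is consistent with the paper's sketch.
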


\begin{proof} Let's sketch the idea of the proof on $\C$-level here, since we will use it in the later computation. Recall from Steenbrink \cite[(4.22)]{st1}, $\mathbb{H}^1(X_0, A^{\bullet})$ has an additional structure, the residue of Gauss-Manin connection $N$=Res$_0\nabla: \mathbb{H}^1(X_0, A^{\bullet})\rightarrow \mathbb{H}^1(X_0, A^{\bullet})(1)$, where ``$(1)$'' is the Tate twist of a mixed Hodge structure.

In fact, $N$ is induced by an endomorphism ${v}: A^{\bullet,\bullet}\rightarrow A^{\bullet+1, \bullet-1}$, the canonical projection $$\Omega_{X}^{p+q+1}(\log  X_0)/W_p\Omega_{X}^{p+q+1}(\log X_0)\rightarrow \Omega_{X}^{p+q+1}(\log  X_0)/W_{p+1}\Omega_{X}^{p+q+1}(\log X_0).$$

It is easy to see that\  $\ker v=$
\[
\xymatrix{
&\Omega^1_{X}\wedge\Omega_{X}^1(\log X_0)/\Omega_{X}^2\\
&\Omega_{X}^{1}(\log X_0)/\Omega_{X}^1\ar[u]_{d''}\ar[r]^-{d'}  &\Omega_{X}^2(\log X_0)/\Omega_{X}^1\wedge\Omega_{X}^1(\log X_0). \\
}
\tag{4.1}\label{diag 4.1}
\]

Through Poincar{\'e} residue map, $\ker v$ is isomorphic to
\[
\xymatrix{
&(a_1)_*\Omega^1_{X_0[1]}\\
&(a_1)_*\mathcal{O}_{X_0[1]}\ar[u]_{d}\ar[r]^-{\theta}  &(a_2)_*\underset{p\in X_0[2]}{\bigoplus}\mathbb{C}_{p}, \\
}
\tag{4.2}\label{diag 4.2}
\]
where $\theta$ is taking the difference of functions at $p$, according to the order of components of $X_0[1]$, i.e., $f
_i-f_j$ for $i<j$,\  where $f_i$ and $f_j$ are local functions on $X_{0,i}$ and $X_{0,j}$, respectively, and $p\in a_1(X_{0,i})\cap a_1(X_{0,j}).$

Note that the double complex (\ref{diag 4.2}) gives the mixed Hodge structure of $X_0$ and the double complex $\ker v$ gives the mixed Hodge structure $W_1\mathbb{H}^1(A^{\bullet})$. The same argument works for $\mathbb{Q}$-structures. Hence $H^1(X_0)\cong W_1\mathbb{H}^1(A^{\bullet})$ as $\mathbb{Q}$-mixed Hodge structures.
\end{proof}

\begin{lemma}\label{4.1.3}
$\mathbb{H}^1(A^{\bullet})/F^1\mathbb{H}^1(A^{\bullet})\cong W_1\mathbb{H}^1(A^{\bullet})/F^1W_1\mathbb{H}^1(A^{\bullet})$.
\end{lemma}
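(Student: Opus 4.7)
The plan is to derive the claim from the short exact sequence of $\mathbb{Q}$-mixed Hodge structures
\begin{equation*}
0\rightarrow W_1\mathbb{H}^1(A^{\bullet})\rightarrow \mathbb{H}^1(A^{\bullet})\rightarrow Gr^W_2\mathbb{H}^1(A^{\bullet})\rightarrow 0
\end{equation*}
together with the fact, noted already in the introduction (cf.\ \cite[proposition 4.21]{st1}), that $\mathbb{H}^1(A^{\bullet})$ is of Hodge type contained in $\{(0,0),(0,1),(1,0),(1,1)\}$. Since $Gr^W_2\mathbb{H}^1(A^{\bullet})$ is pure of weight $2$, and the only allowed Hodge type with $p+q=2$ is $(1,1)$, it is pure of type $(1,1)$; in particular $F^1 Gr^W_2\mathbb{H}^1(A^{\bullet}) = Gr^W_2\mathbb{H}^1(A^{\bullet})$.

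Working over $\mathbb{C}$, I would then quotient the displayed short exact sequence by $F^1$. Strictness of morphisms of mixed Hodge structures with respect to the Hodge filtration (see e.g.\ \cite[proposition 3.12]{pest}) guarantees that the resulting sequence
\begin{equation*}
0\rightarrow W_1\mathbb{H}^1(A^{\bullet})/F^1W_1\mathbb{H}^1(A^{\bullet})\rightarrow \mathbb{H}^1(A^{\bullet})/F^1\mathbb{H}^1(A^{\bullet})\rightarrow Gr^W_2\mathbb{H}^1(A^{\bullet})/F^1 Gr^W_2\mathbb{H}^1(A^{\bullet})\rightarrow 0
\end{equation*}
remains exact: injectivity on the left uses $F^1\cap W_1 = F^1W_1$, which is simply the definition of the induced Hodge filtration on $W_1$, while surjectivity and middle exactness follow from strictness. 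Combining this with the vanishing of the right-hand term gives the isomorphism claimed.

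The proof is almost entirely formal; the only nontrivial input is the assertion on Hodge types of $\mathbb{H}^1(A^{\bullet})$, which is supplied by the monodromy weight spectral sequence computation recalled earlier. Consequently I do not expect any real obstacle beyond recording the strictness argument carefully.
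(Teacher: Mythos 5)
Your proof is correct. The paper's own proof is the same formal argument in a slightly different packaging: it invokes the Deligne splitting to observe that, for a mixed Hodge structure of type $\{(0,0),(0,1),(1,0),(1,1)\}$, the subspaces $W_1$ and $F^1$ together span $H$ --- which is exactly the surjectivity of $W_1\mathbb{H}^1(A^{\bullet})/F^1W_1\mathbb{H}^1(A^{\bullet})\rightarrow \mathbb{H}^1(A^{\bullet})/F^1\mathbb{H}^1(A^{\bullet})$ that you derive from strictness together with the purity of $Gr^W_2\mathbb{H}^1(A^{\bullet})$ of type $(1,1)$, while the injectivity in both versions is just $W_1\cap F^1=F^1W_1$.
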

\begin{proof} This follows immediately from Deligne splitting of mixed Hodge structures \cite[lemma 3.4]{pest}. In fact,  for the mixed Hodge structure $(H, W_{\bullet}, F^{\bullet})$ of type $\{(0,0),(0,1),(1,0),(1,1)\}$, $W_1$ and $F^1$ generate $H$ as vector spaces.
\end{proof}

\begin{lemma}\label{4.1.4}
$W_1\mathbb{H}^1(A^{\bullet})/(F^1W_1\mathbb{H}^1(A^{\bullet})+W_1\mathbb{H}^1(A_{\mathbb{Q}}^{\bullet}))$ \textit{is isomorphic to }$Pic^0(X_0)\otimes\mathbb{Q}$.
\end{lemma}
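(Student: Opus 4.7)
The plan is to invoke Lemma \ref{4.1.2} to translate the statement into one about the intrinsic mixed Hodge structure of the singular curve $X_0$, and then to identify the resulting quotient with $Pic^0(X_0)\otimes\mathbb{Q}$ via the exponential sequence. By Lemma \ref{4.1.2}, $W_1\mathbb{H}^1(A^{\bullet})\cong H^1(X_0)$ as $\mathbb{Q}$-mixed Hodge structures, so the isomorphism is compatible both with the $\mathbb{Q}$-structure and with the Hodge filtration $F^1$. It therefore suffices to produce an isomorphism
\[
H^1(X_0,\mathbb{C})/\bigl(F^1H^1(X_0,\mathbb{C})+H^1(X_0,\mathbb{Q})\bigr)\cong Pic^0(X_0)\otimes\mathbb{Q}.
\]

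The first step is to compute $H^1(X_0,\mathbb{C})/F^1H^1(X_0,\mathbb{C})$. I would read this off directly from the double complex (\ref{diag 4.2}) appearing in the proof of Lemma \ref{4.1.2}: the subcomplex giving $F^1$ consists of the single entry $(a_1)_*\Omega^1_{X_0[1]}$ (in total degree one), while the quotient double complex is $(a_1)_*\mathcal{O}_{X_0[1]}\overset{\theta}{\rightarrow}(a_2)_*\bigoplus_{p\in X_0[2]}\mathbb{C}_p$. By the standard Mayer--Vietoris-type sequence associated to the normalization $a_1\colon X_0[1]\rightarrow X_0$, this quotient complex is a resolution of $\mathcal{O}_{X_0}$, so its first hypercohomology is $H^1(X_0,\mathcal{O}_{X_0})$. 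Together with the degeneracy of the Hodge-to-de Rham spectral sequence built into the cohomological mixed Hodge complex, this yields $H^1(X_0,\mathbb{C})/F^1H^1(X_0,\mathbb{C})\cong H^1(X_0,\mathcal{O}_{X_0})$.

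The second step uses the analytic exponential sequence on the nodal curve $X_0$,
\[
0\rightarrow\mathbb{Z}_{X_0}\rightarrow\mathcal{O}_{X_0}\overset{e^{2\pi\sqrt[]{-1}(\cdot)}}{\longrightarrow}\mathcal{O}_{X_0}^{*}\rightarrow 0,
\]
whose long exact sequence realizes $Pic^0(X_0)$ as the complex Lie group $H^1(X_0,\mathcal{O}_{X_0})/H^1(X_0,\mathbb{Z})$. Since $Pic^0(X_0)$ is a semi-abelian variety, it is a divisible abelian group, and tensoring with $\mathbb{Q}$ kills the torsion while replacing the lattice $H^1(X_0,\mathbb{Z})$ by its $\mathbb{Q}$-image inside $H^1(X_0,\mathcal{O}_{X_0})$, yielding
\[
Pic^0(X_0)\otimes\mathbb{Q}\cong H^1(X_0,\mathcal{O}_{X_0})/H^1(X_0,\mathbb{Q}).
\]
Composing with the identification from the first step produces the desired isomorphism.

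I do not anticipate a serious obstacle; the main subtlety is verifying that the map $H^1(X_0,\mathbb{Q})\rightarrow H^1(X_0,\mathbb{C})/F^1$ agrees, under the identification above, with the map $H^1(X_0,\mathbb{Q})\rightarrow H^1(X_0,\mathcal{O}_{X_0})$ coming from $\mathbb{Q}_{X_0}\hookrightarrow\mathcal{O}_{X_0}$. Both are induced by the inclusion $\mathbb{Q}_{X_0}\hookrightarrow\mathbb{C}_{X_0}$ followed by the natural projection of the mixed Hodge complex onto its $\mathcal{O}$-part, so they coincide. In essence, the lemma is a translation between the mixed Hodge structure on $H^1$ of a nodal curve and the classical description of its generalized Jacobian.
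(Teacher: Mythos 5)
Your proposal is correct and follows exactly the route of the paper's own proof, which consists of the same two-step chain: Lemma \ref{4.1.2} reduces the statement to $H^1(X_0,\mathbb{C})/(F^1H^1(X_0,\mathbb{C})+H^1(X_0,\mathbb{Q}))\cong Pic^0(X_0)\otimes\mathbb{Q}$, and the latter is the classical identification of the generalized Jacobian via the exponential sequence. The paper states these two isomorphisms without further elaboration; your write-up simply supplies the details (the computation of $H^1/F^1$ from the complex (\ref{diag 4.2}) and the divisibility argument for tensoring with $\mathbb{Q}$), all of which are consistent with what the paper uses elsewhere, e.g.\ in ingredient \RN{1}) of section \ref{abs}.
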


\begin{proof} By lemma \ref{4.1.2}, we have
\begin{equation*}
    \begin{aligned}
 & W_1\mathbb{H}^1(A^{\bullet})/(F^1W_1\mathbb{H}^1(A^{\bullet})+W_1\mathbb{H}^1(A_{\mathbb{Q}}^{\bullet}))\\ 
\cong & H^1(X_0, \mathbb{C})/(F^1H^1(X_0, \mathbb{C})+H^1(X_0, \mathbb{Q}))\\
\cong & Pic^0(X_0)\otimes\mathbb{Q}
\end{aligned}
  \end{equation*}
\end{proof}
Upto isogeny, we can take the semi-abelian variety $G$ in the associated 1-motive $\mu_t: L\rightarrow G$ of $\mathbb{H}^1(A^{\bullet})$ to be $Pic^0(X_0)$.



Before we state next theorem, let's give the following ingredients.

\RN{1})\  The morphism \[
\xymatrix{
&\varphi: \mathbb{H}^1(A_{\mathbb{Q}}^{\bullet})\ar[r]&H^1(\mathcal{O}_{X_0})
}
\]
is the composition of canonical morphisms of cohomologies:
\[
\xymatrix{
\mathbb{H}^1(A_{\mathbb{Q}}^{\bullet})\ar[r]&\mathbb{H}^1(A^{\bullet})\ar[r]&\mathbb{H}^1(A^{\bullet})/F^1\mathbb{H}^1(A^{\bullet})=\mathbb{H}^1(Gr_F^0A^{\bullet})}
\]
\[
\xymatrix{
\cong W_1\mathbb{H}^1(A^{\bullet})/F^1W_1\mathbb{H}^1(A^{\bullet})\cong H^1(X_0, \mathbb{C})/F^1H^1(X_0, \mathbb{C})\cong H^1(\mathcal{O}_{X_0}),
}
\]
where $\mathbb{H}^1(A^{\bullet})/F^1\mathbb{H}^1(A^{\bullet})=\mathbb{H}^1(Gr_F^0A^{\bullet})$ is from the degeneration of Hodge spectral sequence at $E_1$ page. 
$$\mathbb{H}^1(Gr_F^0A^{\bullet})\cong W_1\mathbb{H}^1(A^{\bullet})/F^1W_1\mathbb{H}^1(A^{\bullet})$$ follows from lemma \ref{4.1.3}. 
$$W_1\mathbb{H}^1(A^{\bullet})/F^1W_1\mathbb{H}^1(A^{\bullet})\cong H^1(X_0, \mathbb{C})/F^1H^1(X_0, \mathbb{C})$$ follows from lemma \ref{4.1.2}, and $$H^1(X_0, \mathbb{C})/F^1H^1(X_0, \mathbb{C})=\mathbb{H}^1((a_1)_*\mathcal{O}_{X_0[1]}\overset{\theta}{\rightarrow} (a_2)_*\underset{p\in X_0[2]}{\bigoplus}\mathbb{C}_{p})$$ as that in the diagram (\ref{diag 4.2}).

Thus $\mathbb{H}^1(A_{\mathbb{Q}}^{\bullet})\rightarrow H^1(\mathcal{O}_{X_0})$ is induced by morphisms of complexes $$A_{\mathbb{Q}}^{\bullet}\rightarrow A^{\bullet}\rightarrow Gr_F^0A^{\bullet}\overset{P.R}{\longrightarrow}\ \{(a_1)_*\mathcal{O}_{X_0[1]}\overset{\theta}{\rightarrow} (a_2)_*\underset{p\in X_0[2]}{\bigoplus}\mathbb{C}_{p}\},$$ where $P.R$ is the Poincar\'e residue map as is explained in the proof of lemma \ref{4.1.2}.

\RN{2})\  We have canonical morphisms \[
\xymatrix{
&\mathbb{H}^1(A_{\mathbb{Q}}^{\bullet})\ar[r]& Gr_2^W\mathbb{H}^1(A_{\mathbb{Q}}^{\bullet})\ar@{^{(}->}[r]&\mathbb{H}^1(Gr_1^MA_\mathbb{Q}^{\bullet}),\\
}
\]
and the following lemma.

\begin{lemma}\label{4.1.5}
The composition of two maps\[
\xymatrix{
&\mathbb{H}^1(A_{\mathbb{Q}}^{\bullet})\ar@{->>}[r]& Gr_2^W\mathbb{H}^1(A_{\mathbb{Q}}^{\bullet})\ar@{^{(}->}[r]&\mathbb{H}^1(Gr_1^MA_\mathbb{Q}^{\bullet})\\
}
\] is induced by the projection $A_{\mathbb{Q}}^{\bullet}=M_1A_{\mathbb{Q}}^{\bullet}\rightarrow Gr_1^MA_\mathbb{Q}^{\bullet}$.
\end{lemma}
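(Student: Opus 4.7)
The plan is to realize the composition in the statement as the top-weight edge map of the monodromy weight spectral sequence of $(A_{\mathbb{Q}}^{\bullet}, M_{\bullet})$, and then invoke the standard fact that this edge map is, by construction, induced by the projection $A_{\mathbb{Q}}^{\bullet} \twoheadrightarrow Gr_1^M A_{\mathbb{Q}}^{\bullet}$. First, I would check the elementary equality $M_1 A_{\mathbb{Q}}^{\bullet} = A_{\mathbb{Q}}^{\bullet}$: in each of the three nonzero bidegrees $(p,q) \in \{(0,0),(0,1),(1,0)\}$ one has $W_{2p+2}K^{p+q+1} = K^{p+q+1}$, since $W_m K^q = K^q$ whenever $m \geq q$. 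This justifies the short exact sequence of complexes
$$0 \longrightarrow M_0 A_{\mathbb{Q}}^{\bullet} \longrightarrow A_{\mathbb{Q}}^{\bullet} \xrightarrow{\pi} Gr_1^M A_{\mathbb{Q}}^{\bullet} \longrightarrow 0,$$
whose hypercohomology long exact sequence contains
$$\mathbb{H}^1(M_0 A_{\mathbb{Q}}^{\bullet}) \xrightarrow{\iota_{\ast}} \mathbb{H}^1(A_{\mathbb{Q}}^{\bullet}) \xrightarrow{\pi_{\ast}} \mathbb{H}^1(Gr_1^M A_{\mathbb{Q}}^{\bullet}) \xrightarrow{\delta} \mathbb{H}^2(M_0 A_{\mathbb{Q}}^{\bullet}).$$
By construction, $\pi_{\ast}$ is precisely the map induced by the projection appearing in the statement of the lemma.

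The main step is the identification $\ker(\pi_{\ast}) = W_1 \mathbb{H}^1(A_{\mathbb{Q}}^{\bullet})$. Unwinding the relation $Gr^W_{m+1}\mathbb{H}^1(A_{\mathbb{Q}}^{\bullet}) = E_{\infty}^{-m,1+m}$ used in the proof of Lemma \ref{4.1.1}, the weight filtration coming from the filtered complex $(A_{\mathbb{Q}}^{\bullet}, M_{\bullet})$ is given by $W_{m+1}\mathbb{H}^1(A_{\mathbb{Q}}^{\bullet}) = \operatorname{im}\bigl(\mathbb{H}^1(M_m A_{\mathbb{Q}}^{\bullet}) \to \mathbb{H}^1(A_{\mathbb{Q}}^{\bullet})\bigr)$, so taking $m = 0$ yields $W_1\mathbb{H}^1(A_{\mathbb{Q}}^{\bullet}) = \operatorname{im}(\iota_{\ast}) = \ker(\pi_{\ast})$. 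Hence $\pi_{\ast}$ factors as the canonical surjection $\mathbb{H}^1(A_{\mathbb{Q}}^{\bullet}) \twoheadrightarrow Gr_2^W \mathbb{H}^1(A_{\mathbb{Q}}^{\bullet})$ followed by an injection into $\mathbb{H}^1(Gr_1^M A_{\mathbb{Q}}^{\bullet})$.

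Finally, I would verify that this injection agrees with the inclusion used in Lemma \ref{4.1.1}. Since $\mathbb{H}^0(Gr_2^M A_{\mathbb{Q}}^{\bullet}) = 0$ (recorded in that proof) and the spectral sequence degenerates at $E_2$, Lemma \ref{4.1.1} realizes $Gr_2^W \mathbb{H}^1(A_{\mathbb{Q}}^{\bullet}) = E_\infty^{-1,2} = E_2^{-1,2} = \ker(d_1^{-1,2})$ as a subspace of $\mathbb{H}^1(Gr_1^M A_{\mathbb{Q}}^{\bullet})$. A direct comparison of connecting maps, via the natural surjection $M_0 A_{\mathbb{Q}}^{\bullet} \twoheadrightarrow Gr_0^M A_{\mathbb{Q}}^{\bullet}$, shows that $d_1^{-1,2}$ is the composite of $\delta$ with the projection $\mathbb{H}^2(M_0 A_{\mathbb{Q}}^{\bullet}) \to \mathbb{H}^2(Gr_0^M A_{\mathbb{Q}}^{\bullet})$; combined with $\operatorname{im}(\pi_{\ast}) = \ker(\delta)$ from the long exact sequence, this identifies the injection in the factorization of $\pi_{\ast}$ with the kernel inclusion $\ker(d_1^{-1,2}) \hookrightarrow \mathbb{H}^1(Gr_1^M A_{\mathbb{Q}}^{\bullet})$. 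The main obstacle is purely bookkeeping: carefully matching the several different incarnations of the inclusion $Gr_2^W \mathbb{H}^1 \hookrightarrow \mathbb{H}^1(Gr_1^M A_{\mathbb{Q}}^{\bullet})$; no substantive difficulty arises, as the lemma essentially restates the standard fact that the top-weight edge inclusion of the spectral sequence of a filtered complex is induced by projection to the top graded piece.
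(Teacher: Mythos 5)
Your proposal is correct and follows essentially the same route as the paper: both arguments reduce the claim to the edge-map description $E_\infty^{-1,2}=E_2^{-1,2}=\operatorname{im}\{\mathbb{H}^1(M_1A_{\mathbb{Q}}^{\bullet})\to\mathbb{H}^1(Gr_1^MA_\mathbb{Q}^{\bullet})\}$ for the monodromy weight spectral sequence together with the direct computation $M_1A_{\mathbb{Q}}^{\bullet}=A_{\mathbb{Q}}^{\bullet}$. You merely unpack the standard $E_\infty$-identification via the long exact sequence of $0\to M_0A_{\mathbb{Q}}^{\bullet}\to A_{\mathbb{Q}}^{\bullet}\to Gr_1^MA_{\mathbb{Q}}^{\bullet}\to 0$, which the paper quotes without proof.
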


\begin{proof} Consider the monodromy weight spectral sequence
\begin{equation*}
E_1^{-m,1+m}=\mathbb{H}^1(Gr^{M}_mA_{\mathbb{Q}}^{\bullet})\Rightarrow \mathbb{H}^1(A_{\mathbb{Q}}^{\bullet})
\end{equation*}
induced by the filtered complex $M_{-1}A_{\mathbb{Q}}^{\bullet}\subset M_0A_{\mathbb{Q}}^{\bullet}\subset M_{1}A_{\mathbb{Q}}^{\bullet}$. It degenerates at $E_2$ page.\  $$E_{\infty}^{-1,2}=im\{\mathbb{H}^1(M_1A_{\mathbb{Q}}^{\bullet})\rightarrow \mathbb{H}^1(Gr_1^MA_\mathbb{Q}^{\bullet})\}.$$ Also, $$E_{\infty}^{-1,2}=E_2^{-1,2}=Gr_2^W\mathbb{H}^1(A_{\mathbb{Q}}^{\bullet}).$$

Also, by direct computation $M_1A_{\mathbb{Q}}^{\bullet,\bullet}=A_{\mathbb{Q}}^{\bullet,\bullet}=$
\[
\xymatrix{
0\\
\frac{\wedge^2\mathcal{M}}{\wedge^2\textbf{e}(\mathcal{O}_{X_0})}(1)\ar[u]\ar[r]&0\\
\frac{\mathcal{O}_{X_0}\otimes\mathcal{M}}{\mathcal{O}_{X_0}\otimes\textbf{e}(\mathcal{O}_{X_0})}(1)\ar[u]^d\ar[r]^-{\wedge 2\pi\sqrt[]{-1}\tilde{t}}&\frac{\wedge^2\mathcal{M}}{\textbf{e}(\mathcal{O}_{X_0})\wedge\mathcal{M}}(2)\ar[u]\ar[r] &0
}
\]
Thus $M_1A_{\mathbb{Q}}^{\bullet}=A_{\mathbb{Q}}^{\bullet}$.

\end{proof}

\RN{3})\  The composite morphism \[
\xymatrix{
&H^1(\mathcal{O}_{X_0})\ar[r]^-{exp}&Pic^0(X_0)\otimes\mathbb{Q}\ar@{^{(}->}[r]&Pic(X_0)\otimes\mathbb{Q}
}
\] 
is induced by the morphism of complexes
\[
\xymatrix{
&0\ar[r]&(a_1)_*\mathcal{O}_{X_0[1]}\ar[d]^{exp(2\pi i\centerdot)}\ar[r]^-{\theta}  &(a_2)_*\underset{p\in X_0[2]}{\bigoplus}\mathbb{C}_{p}\ar[r]\ar[d]^{exp(2\pi i\centerdot)}&0\\
&1\ar[r]&(a_1)_*\mathcal{O}^*_{X_0[1]}\otimes_{\mathbb{Z}}\mathbb{Q}\ar[r]^{\theta^*}&(a_2)_*\underset{p\in X_0[2]}{\bigoplus}\mathbb{C}^*_{p}\otimes_{\mathbb{Z}}\mathbb{Q}\ar[r]&1,
}
\]
where $\theta^*$ is taking the ratio of functions at $p$, according to the order of components of $X_0[1]$, i.e., $f
_i/f_j$ for $i<j$,\  where $f_i$ and $f_j$ are local functions on $X_{0,i}$ and $X_{0,j}$, respectively, and $p\in a_1(X_{0,i})\cap a_1(X_{0,j}).$

\begin{thm} \label{4.1.6}
The 1-motive map $\mu_t$ of the extension of\  $\mathbb{Q}$-mixed Hodge structures $0\rightarrow W_1\mathbb{H}^1(A^{\bullet})\rightarrow \mathbb{H}^1(A^{\bullet})\rightarrow Gr^W_2\mathbb{H}^1(A^{\bullet})\rightarrow 0$ is the morphism making the following diagram commutative
\[
\xymatrix{
&H^1(\mathcal{O}_{X_0})\ar[r]^-{exp}&Pic^0(X_0)\otimes\mathbb{Q}\\
&\mathbb{H}^1(A_{\mathbb{Q}}^{\bullet})\ar[r]^-{pr}\ar[u]^{\varphi}&L\otimes\Q\ar@{-->}[u]_{\mu_t}\ar@{=}[r]& Gr_2^W\mathbb{H}^1(A_{\mathbb{Q}}^{\bullet}),\\
}
\tag{4.3}\label{diag 4.3}
\]
where $pr: \mathbb{H}^1(A_{\mathbb{Q}}^{\bullet})\rightarrow L\otimes\mathbb{Q}$ is the usual projection map. The morphisms in the diagram (\ref{diag 4.3}), except $\mu_t$, are discussed in \RN{1}), \RN{2}), \RN{3}) previously.
\end{thm}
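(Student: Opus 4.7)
My plan is to realize $\mu_t$ directly from Deligne's construction in section \ref{abstract 1-motive}, applied to the Tate-twisted mixed Hodge structure $\mathbb{H}^1(A^{\bullet})(1)$, and then to unfold it into the composition $exp\circ\varphi$.  Lemmas \ref{4.1.1} and \ref{4.1.4} already identify $L\otimes\mathbb{Q}=Gr_0^WH_{\mathbb{Q}}=Gr_2^W\mathbb{H}^1(A_{\mathbb{Q}}^{\bullet})$ and $G\cong Pic^0(X_0)\otimes\mathbb{Q}$.  Deligne's recipe will compute $\mu_t(\ell)$, for $\ell\in L\otimes\mathbb{Q}$, as the image in $G$ of any rational preimage $\eta\in\mathbb{H}^1(A_{\mathbb{Q}}^{\bullet})$ of $\ell$ under the sequence
\[
\mathbb{H}^1(A_{\mathbb{Q}}^{\bullet})\hookrightarrow\mathbb{H}^1(A^{\bullet})\twoheadrightarrow\mathbb{H}^1(A^{\bullet})/F^1\xrightarrow{\sim}W_1\mathbb{H}^1(A^{\bullet})/F^1W_1\twoheadrightarrow G,
\]
where the middle isomorphism is that of lemma \ref{4.1.3} and the last surjection is the quotient by the image of $W_1\mathbb{H}^1(A_{\mathbb{Q}}^{\bullet})$.

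To match this with the diagram (\ref{diag 4.3}), I will insert into the chain above the isomorphism $W_1\mathbb{H}^1(A^{\bullet})/F^1W_1\cong H^1(\mathcal{O}_{X_0})$ coming from lemma \ref{4.1.2} combined with the degeneracy of the Hodge-to-de Rham spectral sequence for $X_0$.  By the very definition of $\varphi$ in (I), the first four arrows composed with this isomorphism equal $\varphi$.  Because lemma \ref{4.1.2} is an isomorphism of $\mathbb{Q}$-mixed Hodge structures, the lattice $W_1\mathbb{H}^1(A_{\mathbb{Q}}^{\bullet})$ corresponds to $H^1(X_0,\mathbb{Q})\subset H^1(\mathcal{O}_{X_0})$, and the exponential sequence on $X_0$ furnishes the identification $Pic^0(X_0)\otimes\mathbb{Q}\cong H^1(\mathcal{O}_{X_0})/H^1(X_0,\mathbb{Q})$ used in (III).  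Hence the final surjection in the display is precisely $exp$.  In particular $exp\circ\varphi$ vanishes on $W_1\mathbb{H}^1(A_{\mathbb{Q}}^{\bullet})=\ker(pr)$, so it factors uniquely through $pr$, and the factorization coincides, by construction, with Deligne's $\mu_t$; that is exactly the content of the theorem.

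The main obstacle will be a compatibility check for the several identifications used.  I will need to verify that the isomorphism of lemma \ref{4.1.2}, obtained on the $\mathbb{C}$-side by applying the Poincar\'e residue map to the subcomplex (\ref{diag 4.1}) yielding (\ref{diag 4.2}) and on the $\mathbb{Q}$-side via theorem \ref{2.2.3}, truly is an isomorphism of $\mathbb{Q}$-mixed Hodge structures---in particular that it sends $F^1W_1\mathbb{H}^1(A^{\bullet})$ onto $F^1H^1(X_0,\mathbb{C})$ and $W_1\mathbb{H}^1(A_{\mathbb{Q}}^{\bullet})$ onto $H^1(X_0,\mathbb{Q})$.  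I will also need the Deligne splitting underlying lemma \ref{4.1.3} to be compatible with the canonical map $\mathbb{H}^1(A^{\bullet})/F^1\to W_1\mathbb{H}^1(A^{\bullet})/F^1W_1$.  Once these naturality statements are in hand, the theorem reduces to a routine diagram chase comparing the abstract construction of section \ref{abstract 1-motive} with the explicit morphisms (I)--(III) described just before the statement.
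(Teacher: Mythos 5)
Your proposal is correct and takes essentially the same route as the paper: both arguments come down to the commutative diagram of the two short exact sequences $0\to H^1(X_0,\mathbb{Q})\to H^1(\mathcal{O}_{X_0})\to Pic^0(X_0)\otimes\mathbb{Q}\to 0$ and $0\to W_1\mathbb{H}^1(A_{\mathbb{Q}}^{\bullet})\to\mathbb{H}^1(A_{\mathbb{Q}}^{\bullet})\to L\otimes\mathbb{Q}\to 0$, linked by lemma \ref{4.1.2} and the map $\varphi$, followed by matching the induced map on cokernels with Deligne's construction (2) from section \ref{abstract 1-motive}. The compatibility checks you flag at the end are precisely what the paper asserts implicitly when it says the left square commutes ``by lemma \ref{4.1.2} and the construction of $\varphi$,'' so your version is just a more explicit unwinding of the same argument.
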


\begin{proof}
Note that we have\  $\ker(exp)=H^1(X_0, \mathbb{Q})$. Also, $\ker(pr)=W_{1}\mathbb{H}^1(A_{\mathbb{Q}}^{\bullet})$.  Then by lemma \ref{4.1.2} and the construction of $\varphi$ we have the following commutative diagram:
\[
\xymatrix{
0\ar[r]&H^1(X_0, \mathbb{Q})\ar[r]&H^1(\mathcal{O}_{X_0})\ar@{->>}[r]^-{exp}&Pic^0(X_0)\otimes\mathbb{Q}\ar[r]&0\\
0\ar[r]&W_{1}\mathbb{H}^1(A_{\mathbb{Q}}^{\bullet})\ar[u]^{\cong}\ar[r]&\mathbb{H}^1(A_{\mathbb{Q}}^{\bullet})\ar@{->>}[r]^-{pr}\ar[u]^{\varphi}&L\otimes\mathbb{Q}\ar[r]&0.\\
}
\]
Thus there exist a unique map $\mu_t: L\otimes\mathbb{Q}\rightarrow Pic^0(X_0)\otimes\mathbb{Q}$ making the above diagram commutative. Then by construction (2) of section \ref{abstract 1-motive}, $\mu_t$ is the 1-motive up to isogeny of the extension $0\rightarrow W_1\mathbb{H}^1(A^{\bullet})\rightarrow \mathbb{H}^1(A^{\bullet})\rightarrow Gr^W_2\mathbb{H}^1(A^{\bullet})\rightarrow 0$.

What's more, by diagram chasing, $\mathbb{H}^1(A_{\mathbb{Q}}^{\bullet})$ is the fiber product of morphisms $exp$ and $\mu_t$.
\end{proof}

\subsection{Geometric description of 1-motives $\RN{1}$} \label{geo 1}

In the last three subsections, we want to find out a nice geometric description of the morphism $\mu_t: L\rightarrow Pic^0(X_0)$ up to isogeny, where $\mu_t$ is described in theorem \ref{4.1.6}.\\ 

Note that we have the following short exact sequence:
\[
\xymatrix{
1\ar[r]&{\mathbb{C}^*}^{d+1-n}\ar[r]&Pic^0(X_0)\ar[r]^{a_1^*}&Pic^0(X_0[1])\ar[r]&1,\\
}
\]
where $d$ is the number of nodal points of $X_0$, and $n$ is the number of irreducible components of $X_0$. Geometrically, any line bundle $\mathcal{L}$ over $X_0$ is coming from the pullback line bundle $\tilde{\mathcal{L}}:=a_1^*\mathcal{L}$ over $X_0[1]$ glued along the preimage of nodal points. The gluing data is encoded in ${\mathbb{C}^*}^{d+1-n}$.\ \ Based on the above discussion, to understand $\mu_t$, we first calculate $a_1^*\circ \mu_t$ up to isogeny in the following commutative diagram
\[
\xymatrix{
&H^1(\mathcal{O}_{X_0})\ar[r]^-{exp}&Pic^0(X_0)\otimes\mathbb{Q}\ar@{->>}[r]^-{a_1^*}&Pic^0(X_0[1])\otimes\mathbb{Q}\\
&\mathbb{H}^1(A_{\mathbb{Q}}^{\bullet})\ar[r]^-{pr}\ar[u]^{\varphi}& Gr_2^W\mathbb{H}^1(A_{\mathbb{Q}}^{\bullet})\ar[u]^{\mu_t}.\\
}
\tag{4.4}\label{diag 4.4}
\]

Before we formulate next theorem, let's make the following notation.

\begin{note}\label{4.2.1}
 For any nodal point $p\in X_0[2]$, we denote $a_1^{-1}(p)$ by $\{p',p''\}$, where $p'$ is contained in $X_{0,i}$ and $p''$  is contained in $X_{0,j}$, for $i<j$.
\end{note}

\begin{thm}\label{4.2.2}
The morphism $$a_1^*\circ \mu_t: \ker  \{H_0(X_0[2], \mathbb{Z})\rightarrow H_0(X_0[1], \mathbb{Z})\}\otimes \mathbb{Q} \rightarrow Pic(X_0[1]\otimes\mathbb{Q})$$ is given by $$D=\underset{p\in X_0[2]}{\Sigma}n_p p\mapsto \mathcal{O}_{X_0[1]}(\underset{p\in X_0[2]}{\Sigma}n_p (p'-p'')),$$ where $a_1^*D$ is a degree zero divisor on each irreducible component of $X_0[1].$
\end{thm}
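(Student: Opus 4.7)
The plan is to compute $a_1^*\mu_t(D)$ by constructing an explicit \v{C}ech-hypercohomology 1-cocycle $\alpha \in \mathbb{H}^1(A_{\mathbb{Q}}^{\bullet})$ whose image under $pr$ in diagram (\ref{diag 4.4}) is $D$, and then tracing it through $\varphi$, $a_1^*$, and $\exp$. Cover $X_0$ by pairwise-disjoint analytic neighborhoods $V_p$ of the nodes (each with the chosen local coordinates $u_p, v_p$ on a neighborhood in $X$ satisfying $u_pv_p=t$) together with an open $U_0$ avoiding the nodes. On each $V_p$ set $c_p := n_p(u_p \wedge v_p)(1) \in A_{\mathbb{Q}}^{0,1}(V_p)$ and on $U_0$ set $c_0 := 0$. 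By Lemma \ref{4.1.5} together with the identification $Gr_1^M A_{\mathbb{Q}}^{0,1} \cong (a_2)_*\bigoplus_p \mathbb{Q}(1)_p$ obtained in Theorem \ref{2.2.3}, these local sections already represent $D$ (with its Tate twist) under the projection to $Gr_2^W\mathbb{H}^1(A_{\mathbb{Q}}^{\bullet}) \cong L \otimes \mathbb{Q}$.

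To promote $(c_p, c_0)$ to a hypercohomology 1-cocycle one needs \v{C}ech coboundaries $b_{p,0} \in A_{\mathbb{Q}}^{0,0}(V_p \cap U_0)$ with $d'' b_{p,0} = c_p$. On the part of $V_p \cap U_0$ lying over the branch $\{u_p = 0\}$, the function $v_p$ is invertible, so I choose a local logarithm $h_p = \log(v_p)/(2\pi\sqrt{-1})$; the relation $v_p = \textbf{e}(h_p)$ in $\mathcal{M}$ combined with the Koszul differential $d''(f \otimes m) = \textbf{e}(f)\wedge m$ gives $u_p \wedge v_p = -d''(h_p \otimes u_p)$, so $b_{p,0} := -n_p(h_p \otimes u_p)(1)$ does the job there. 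Symmetrically, on the $\{v_p = 0\}$ branch one takes $k_p = \log(u_p)/(2\pi\sqrt{-1})$ and $b_{p,0} := n_p(k_p \otimes v_p)(1)$. Different choices of logarithmic branch differ by integers and so produce only integer \v{C}ech coboundaries, which vanish after exponentiation.

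Now apply $\phi$ from Theorem \ref{2.2.1} and project to the Hodge-graded $Gr_F^0 A^{\bullet}$, which under the stupid Hodge filtration of Steenbrink's double complex coincides with the Poincar\'e-residue complex $\{(a_1)_*\mathcal{O}_{X_0[1]} \xrightarrow{\theta} (a_2)_*\bigoplus_p \mathbb{C}_p\}$ appearing in diagram (\ref{diag 4.2}). The sections $c_p$ sit in $A^{0,1} \subset F^1$ and so die in $Gr_F^0$, leaving only the images of the $b_{p,0}$ as a \v{C}ech 1-cocycle in $(a_1)_*\mathcal{O}_{X_0[1]}$. A direct calculation shows that $\phi$ followed by Poincar\'e residue sends $h_p \otimes u_p$ to the germ $h_p|_{\{u_p=0\}} = \log(v_p)/(2\pi\sqrt{-1})$ on $X_{0,i}$ near $p'$, and symmetrically $k_p \otimes v_p$ to $\log(u_p)/(2\pi\sqrt{-1})$ near $p''$. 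Passing to $X_0[1]$ via $a_1^*$ and exponentiating then yields the transition functions $v_p^{n_p}$ on a punctured neighborhood of $p'$ and $u_p^{-n_p}$ on a punctured neighborhood of $p''$, which are precisely the transition functions of $\mathcal{O}_{X_0[1]}(\sum n_p(p'-p''))$ relative to the tautological trivialization on the complement of the special points, completing the proof.

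The principal difficulty is the bookkeeping of the normalizations throughout: the factor $(2\pi\sqrt{-1})^{-q}$ in $\phi$, the Tate twists $(p+1)$ on $A_{\mathbb{Q}}^{p,q}$, the Poincar\'e residue sign conventions, and the anti-commutative sign in the Koszul complex must all be handled consistently so that the resulting line bundle is $\mathcal{O}_{X_0[1]}(\sum n_p(p'-p''))$ rather than its inverse or a shifted variant.
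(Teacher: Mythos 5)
Your route is genuinely different from the paper's. The paper proves Theorem \ref{4.2.2} by a soft functoriality argument: Lemma \ref{4.2.3} uses Hain's real-oriented-blowup model of the nearby fiber to produce a canonical morphism of mixed Hodge structures $\mathbb{H}^1(A^{\bullet})/W_0\mathbb{H}^1(A^{\bullet})\hookrightarrow H^1(Y)$, where $Y=X_0[1]\setminus a_1^{-1}(S)$ is the punctured normalization; functoriality of the Deligne 1-motive construction then identifies $a_1^*\circ\mu_t$ with the restriction of the 1-motive $\mu_Y$ of the open curve $Y$, and the explicit formula is quoted from Deligne, (10.3.8). You instead perform the explicit \v{C}ech-hypercohomology computation in $A_{\mathbb{Q}}^{\bullet}$, which is essentially the method the paper reserves for Theorem \ref{4.4.2}, where the gluing data at the nodes (invisible after applying $a_1^*$) must also be extracted. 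Your local section $n_p(u_p\wedge v_p)$ agrees with the paper's $\overline{G^{\alpha}\wedge\tilde t}$ since $u_p^{n_p}\wedge(u_pv_p)=n_p(u_p\wedge v_p)$ in $\wedge^2\mathcal{M}$, and omitting the $A_{\mathbb{Q}}^{1,0}$-component is harmless because that sheaf is a skyscraper supported on the nodes, which your double overlaps avoid. If carried out correctly your method proves more (it is the $a_1^*$-shadow of the full Theorem \ref{4.4.2}), while the paper's argument is shorter and outsources the computation to Deligne.

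There is, however, a genuine gap in the execution. With the cover $\{V_p\}\cup\{U_0\}$ the overlaps $V_p\cap U_0$ contain punctured disks around the preimages of the nodes, on which $v_p$ (resp. $u_p$) is a coordinate vanishing at the puncture; hence no single-valued branch of $\log v_p$ exists there, and $h_p$, and with it the cochain $b_{p,0}$, is not defined. Your remark that different branches differ by integers addresses the ambiguity of a choice, not the nonexistence of one. Repairing this forces a good cover as in Notation \ref{4.3.1}; nonempty triple overlaps then reappear and $\delta b$ becomes a $\mathbb{Z}(1)$-valued $2$-cochain measuring the degree of $\mathcal{O}_{X_0[1]}(\Sigma n_p(p'-p''))$ on each component. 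The hypothesis $D\in\ker\{H_0(X_0[2],\mathbb{Z})\rightarrow H_0(X_0[1],\mathbb{Z})\}$ is exactly what kills this obstruction rationally, and it never enters your argument: as written, your construction would ``work'' for every $D\in H_0(X_0[2],\mathbb{Z})$, which cannot be, since $pr$ takes values in $Gr_2^W\mathbb{H}^1(A_{\mathbb{Q}}^{\bullet})=L\otimes\mathbb{Q}$. Finally, note a labeling slip: by Notation \ref{4.2.1} the branch $\{u_p=0\}$ contains $p''$, not $p'$, so your assignment of the transition functions $v_p^{n_p}$ and $u_p^{-n_p}$ to neighborhoods of $p'$ and $p''$ is crossed; this is part of the sign bookkeeping you flag but leave unresolved, and it is precisely what distinguishes $\mathcal{O}_{X_0[1]}(\Sigma n_p(p'-p''))$ from its inverse.
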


Before the proof of theorem \ref{4.2.2}, we prove the following lemma first. Denote the set of nodal points of $X_0$ by $S$. Consider the normalization map $a_1: X_0[1]\rightarrow X_0$. We have the open smooth curve $X_0[1]-a_1^{-1}(S)$, which we denote by $Y$.

\begin{lemma}\label{4.2.3}
There is a canonical injective morphism of $\mathbb{Q}$-mixed Hodge structures $\mathbb{H}^1(A^{\bullet})/W_0\mathbb{H}^1(A^{\bullet})\hookrightarrow H^1(Y)$.
\end{lemma}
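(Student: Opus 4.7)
The plan is to construct a morphism of cohomological mixed Hodge complexes from the Steenbrink complex to the log de Rham complex of $(X_0[1], a_1^{-1}(S))$, and then derive the claimed injection from a graded-piece analysis.

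First I would define a morphism of complexes of sheaves on $X_0$,
$$\chi^\bullet : A^\bullet \longrightarrow (a_1)_* \Omega^\bullet_{X_0[1]}(\log a_1^{-1}(S)),$$
built entirely from Poincar\'e residues. In degree $0$, $\chi^0 : A^{0,0} = \Omega^1_X(\log X_0)/\Omega^1_X \to (a_1)_*\mathcal{O}_{X_0[1]}$ is the residue isomorphism. In degree $1$, writing $A^1 = A^{0,1} \oplus A^{1,0}$, the component $\chi^1$ is single Poincar\'e residue on $A^{0,1} = \Omega^2_X(\log X_0)/\Omega^2_X$ (which carries the double-pole part into a log $1$-form on $X_0[1]$ with logarithmic poles along $a_1^{-1}(S)$) and zero on $A^{1,0}$. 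Commutativity with the differentials follows because Poincar\'e residue commutes with $d$ and because the $d'$-contribution from $A^{0,0}$ to $A^{1,0}$ is annihilated by $\chi^1$. Let $\iota : Y \hookrightarrow X_0$ denote the inclusion of the smooth locus. On the $\mathbb{Q}$-side, the corresponding arrow comes from the natural restriction $\Psi_f\mathbb{Q}_X \to R\iota_*\mathbb{Q}_Y$, which exists since $\Psi_f\mathbb{Q}_X|_Y \cong \mathbb{Q}_Y$ ($f$ being smooth away from the nodes), combined with the quasi-isomorphism $A^\bullet_\mathbb{Q} \simeq \Psi_f\mathbb{Q}_X$ of Theorem \ref{2.2.3}(3).

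Next I would verify that $\chi^\bullet$ is bifiltered: with respect to the weight filtrations ($M_\bullet$ on the left, Deligne's $W_\bullet$ on the right) and the Hodge filtrations (stupid filtration on both sides). A local check at each node using coordinates $u, v$ with $uv = t\circ f$ shows $\chi^\bullet(M_r A^\bullet) \subset W_r(a_1)_*\Omega^\bullet_{X_0[1]}(\log a_1^{-1}(S))$; the salient point is that $M_0 A^{0,1}$, the single-log-pole part, residues into ordinary (not log) $1$-forms on $X_0[1]$, while the $M_{-1}$-piece $A^{1,0}$ is killed. By the cohomological mixed Hodge complex formalism used throughout Section \ref{double complex}, this produces a morphism of $\mathbb{Q}$-mixed Hodge structures $\chi^*: \mathbb{H}^1(A^\bullet) \to H^1(Y,\mathbb{Q})$.

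Finally I would conclude injectivity modulo $W_0$. Since $Y$ is smooth and open, $H^1(Y)$ has weights in $\{1,2\}$, so $W_0 H^1(Y) = 0$ and therefore $W_0\mathbb{H}^1(A^\bullet) \subseteq \ker \chi^*$ automatically. It then suffices to check $\chi^*$ is injective on $Gr_1^W$ and $Gr_2^W$. On $Gr_1^W$: via Lemma \ref{4.1.2}, the map $W_1\mathbb{H}^1(A^\bullet) \to W_1 H^1(Y)$ factors through the normalization map $H^1(X_0) \to H^1(X_0[1])$, whose induced map on $Gr_1^W$ is the identity on $H^1(X_0[1]) = Gr_1^W H^1(Y)$. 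On $Gr_2^W$: Lemma \ref{4.1.1} together with the Gysin description $Gr_2^W H^1(Y) \cong \ker\{H^0(a_1^{-1}(S),\mathbb{Q}) \to H^2(X_0[1],\mathbb{Q})\}$ reduces the claim to injectivity of the natural map $1_p \mapsto 1_{p'} - 1_{p''}$ between the two kernels, which is obvious. The main obstacle is the filtration compatibility in the second step, particularly matching the indexing between the monodromy weight filtration $M$ on $A^\bullet$ and Deligne's weight filtration on $\Omega^\bullet_{X_0[1]}(\log a_1^{-1}(S))$; once settled, the graded injectivity yields the desired canonical injection $\mathbb{H}^1(A^\bullet)/W_0 \mathbb{H}^1(A^\bullet) \hookrightarrow H^1(Y)$.
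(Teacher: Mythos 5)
Your argument is correct in outline, but it takes a genuinely different route from the paper's. The paper obtains the morphism $\mathbb{H}^1(A^{\bullet})\to H^1(Y)$ topologically: it realizes the nearby fiber $X_\infty$ via the real oriented blowup $X_0'$ of $X_0[1]$ along $a_1^{-1}(S)$, so that $Y\hookrightarrow X_0'\to X_\infty$, and then quotes \cite[theorem 14]{hain} for the fact that the induced restriction $H^1(X_\infty)\to H^1(Y)$ is a morphism of $\mathbb{Q}$-mixed Hodge structures; identifying the kernel with $W_0$ is then the short diagram chase of diagram (\ref{diag 4.5}), combining the Gysin sequence of $Y\subset X_0[1]$ with Lemmas \ref{4.1.1} and \ref{4.1.2}. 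You instead construct the comparison map by hand, as a bifiltered morphism of complexes $A^{\bullet}\to (a_1)_*\Omega^{\bullet}_{X_0[1]}(\log a_1^{-1}(S))$ given by single Poincar{\'e} residues, and conclude by strictness together with injectivity on $Gr^W_1$ and $Gr^W_2$; this endgame carries the same information as the paper's diagram chase, and your filtration bookkeeping ($M_r\mapsto W_r$ on complexes, hence $W_{r+1}\mapsto W_{r+1}$ on $\mathbb{H}^1$) and your identification of the $Gr^W_2$ map with $1_p\mapsto 1_{p'}-1_{p''}$ are consistent with Lemmas \ref{4.1.2} and \ref{4.1.1}. What your approach buys is that it stays entirely inside the sheaf-theoretic framework of Section \ref{double complex} and is explicit on cochains, which would mesh well with the computations of Sections \ref{geo 2}--\ref{geo 3}; what it costs is that the Hodge-theoretic content is no longer outsourced: you must verify that the $\mathbb{C}$-level residue map and the $\mathbb{Q}$-level adjunction map $\Psi_f\underline{\mathbb{Q}}_X\to R\iota_*\mathbb{Q}_Y$ agree under the comparison quasi-isomorphisms of Theorem \ref{2.2.3}(3), i.e.\ that your data really assemble into a morphism of cohomological mixed Hodge complexes. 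That compatibility is precisely the point the paper delegates to Hain's theorem, and in your write-up it is asserted rather than checked; it is the one step that still needs to be carried out (or replaced by the citation) before the proof is complete.
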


\begin{proof} In \cite[section 5.3]{hain}, we have another construction of the nearby fiber $X_{\infty}$ through real oriented blowup. Let $X'_0$ be the real oriented blowup of $X_0[1]$ along $a_1^{-1}(S)$. Then $X_{\infty}$ is obtained from $X'_0$ by gluing, the gluing data for which is given by the local defining equation of $X_0$ and the parameter $t$ we chose. Topologically, we have $$Y\hookrightarrow X'_0\rightarrow X_{\infty}.$$

The induced map $\mathbb{H}^1(A^{\bullet})\cong H^1(X_{\infty})\rightarrow H^1(Y)$ is a morphism of $\mathbb{Q}$-mixed Hodge structures by \cite[theorem 14]{hain}. Then we have the following commutative diagram of $\mathbb{Q}$-mixed Hodge structures
\[
\xymatrix{
&&0\ar[d]\\
0\ar[r]&W_0H^1(X_0)\ar[d]^{\cong}\ar[r]&H^1(X_0)\ar[r]\ar[d]&H^1(Y)\ar[r]& Gr^W_2H^1(Y)\ar[r]&0\\
0\ar[r]&W_0\mathbb{H}^1(A^{\bullet})\ar[r]&\mathbb{H}^1(A^{\bullet})\ar[d]\ar[ru]\\
&&Gr_2^W\mathbb{H}^1(A^{\bullet})\ar@{^{(}->}[rruu]\ar[d].\\
&&0
}
\tag{4.5}\label{diag 4.5}
\]

The first row is an exact sequence of $\mathbb{Q}$-mixed Hodge structures, which is obtained from the inclusion of curves $Y\hookrightarrow X_0[1]$. The isomorphism in the first column is obtained from lemma \ref{4.1.2}. Also, by lemma \ref{4.1.2}, the second column is a short exact sequence of $\mathbb{Q}$-mixed Hodge structures. The inclusion map $Gr_2^WH^1(A^{\bullet})\hookrightarrow Gr^W_2H^1(Y)$ is obtained from lemma \ref{4.1.1}. Thus, from the above diagram, we get an exact sequence of $\mathbb{Q}$-mixed Hodge structures 
$$0\rightarrow W_0\mathbb{H}^1(A^{\bullet})\rightarrow \mathbb{H}^1(A^{\bullet})\rightarrow H^1(Y).$$
\end{proof}

\begin{proof}[Proof of theorem \ref{4.2.2}]
Consider the $\mathbb{Q}$-mixed Hodge structure for the open curve $Y$. By construction (2) in section \ref{abstract 1-motive}, we have the following commutative diagram
\[
\xymatrix{
H^1(\mathcal{O}_{X_0[1]})\ar[r]^-{exp}&Pic^0(X_0[1])\otimes\mathbb{Q}\\
H^1(Y, \mathbb{Q})\ar[r]^-{pr}\ar[u]& Gr_2^WH^1(Y, \mathbb{Q})\ar[u]^{\mu_Y}\\
}
\tag{4.6}\label{diag 4.6}
\]
where $\mu_Y$ is the 1-motive associated to the $\mathbb{Q}$-mixed Hodge structure $H^1(Y)$. Then by lemma \ref{4.2.3}, we have the following commutative diagram
\[
\xymatrix{
H^1(\mathcal{O}_{X_0})\ar[r]&H^1(\mathcal{O}_{X_0[1]})\ar[r]^-{exp}&Pic^0(X_0[1])\otimes\mathbb{Q}\\
\mathbb{H}^1(A_{\mathbb{Q}}^{\bullet})\ar[r]\ar[u]^{\varphi}&\mathbb{H}^1(A_{\mathbb{Q}}^{\bullet})/W_0\mathbb{H}^1(A_{\mathbb{Q}}^{\bullet})\ar[r]^-{pr}\ar[u]& Gr_2^W\mathbb{H}^1(A_{\mathbb{Q}}^{\bullet})\ar[u]_{\mu_Y|_{Gr_2^W\mathbb{H}^1(A_{\mathbb{Q}}^{\bullet})}}.\\
}
\tag{4.7}
\]

Thus we have $a_1^*\circ \mu_t\cong\mu_Y|_{Gr_2^W\mathbb{H}^1(A_{\mathbb{Q}}^{\bullet})}$. By Deligne \cite[(10.3.8)]{de1}, the 1-motive map $$\mu_Y|_{Gr_2^W\mathbb{H}^1(A_{\mathbb{Q}}^{\bullet})}: \ker  \{H_0(X_0[2], \mathbb{Z})\rightarrow H_0(X_0[1], \mathbb{Z})\}\otimes \mathbb{Q} \rightarrow Pic(X_0[1]\otimes\mathbb{Q})$$ is just the map $$D=\underset{p\in X_0[2]}{\Sigma}n_p p\mapsto \mathcal{O}_{X_0[1]}(\underset{p\in X_0[2]}{\Sigma}n_p (p'-p'')),$$ where $a_1^*D$ is a degree zero divisor on each component of $X_0[1].$ Hence theorem \ref{4.2.2} holds.
\end{proof}

\subsection{Geometric description of 1-motives $\RN{2}$} \label{geo 2}

In subsection \ref{geo 1}, we have already known that if $D=\underset{p\in X_0[2]}{\Sigma} n_p p\in L$, the 1-motive $\mu_t(D)$ can be obtained from a line bundle $\mathcal{O}_{X_0[1]}(\underset{p\in X_0[2]}{\Sigma} n_p (p'-p''))$ with gluing data along the pairs $(p',p'')$.\ In this subsection, let's describle an educated guess of 1-motives $\mu_t$ associated to the limit mixed Hodge structure of a degeneration of curves in Deligne \cite{de2}.

Recall that in the introduction, we have $f:X\rightarrow \Delta$, with a chosen parameter $t: \Delta\rightarrow \mathbb{C}$. It gives a global section $\tilde{t}:=t\circ f\in\Gamma(X, \mathcal{M}_X)$, where $\mathcal{M}_X=\mathcal{O}_X\cap j_*\mathcal{O}^*_U$. $\tilde{t}$ has a zero along each component $\{X_{0,i}\}$ of order 1, since the monodromy is unipotent.

For any nodal point $p\in X_0\subset X$, we choose an open neighborhood $V_p$ of $p$ in analytic topology of $X$, such that $f:V_p\rightarrow\Delta$ can be defined to be $\{(u,v,\tilde{t})\in \mathbb{C}^3|\ u\cdot v=\tilde{t}\}\overset{pr_3}{\longrightarrow}\C$, i.e., the following diagram commutes:
\[
\xymatrix{
&&W\ar@{^{(}->}[d]^-{\text{closed immersion}}\\
V_p\ar[rru]\ar[rd]_-{\tilde{t}}\ar[rr]&&\mathbb{C}^2\times\mathbb{C}\ar[ld]^-{pr_2}\\
&\mathbb{C}
}
\tag{4.8}\label{diag 4.8}
\]
where $W$ is $\{(u,v,\tilde{t})\in \mathbb{C}^3|\ u\cdot v=\tilde{t}\}$. Note that $\{u=0\}$ and $\{v=0\}$ give the two components of $V_p\cap X_{0}$. Thus $u|_{\{v=0\}}$ can be regarded as a local coordinate function at $p'$  and $v|_{\{u=0\}}$ can be regarded as a coordinate function at $p''$ in terms of notation \ref{4.2.1}.

\begin{note} \label{4.3.1}
Pick a set of open complex polydisks $\{V_{\alpha}\}_{\alpha\in I}$ in $X$, s.t., $\mathcal{U}=\{U_{\alpha}:=V_{\alpha}\cap X_0\}_{\alpha\in I}$ is a covering of $X_0$. We also assume that $V_{\alpha}, U_{\alpha}, V_{\alpha}\cap V_{\beta}$ and $U_{\alpha}\cap U_{\beta}$ are simply connected, connected and containing at most one nodal point, for all $\alpha, \beta$. Also assume that every nodal point is contained in at most one $U_{\alpha}, V_{\alpha}$.
\end{note}

For the covering\  $\mathcal{U}=\underset{\alpha\in I}{\bigcup}U_{\alpha}$ of $X_0$\ in the above notation, as is described in the diagram (\ref{diag 4.8}), we can choose $u_{\alpha},\  v_{\alpha}$ for any $V_{\alpha}$ containing some nodal point $p$ in $X_0$.\  We always denote $u_{\alpha}$ to be the coordinate function at $p'$ and $v_{\alpha}$ to be the coordinate function at $p''$, in terms of notation \ref{4.2.1}.\\

\begin{defin} \label{4.3.2}
Define $$\nu_t: L=\ker\{H_0(X_0[2], \mathbb{Z})\rightarrow H_0(X_0[1], \mathbb{Z})\} \longrightarrow Pic^0(X_0)$$ as follows.

For any element $D=\underset{p\in X_0[2]}{\Sigma} n_p p\in L$, attach it with the line bundle\  $$\mathcal{L}(D):=\mathcal{O}_{X_0[1]}(\underset{p\in X_0[2]}{\Sigma} n_p (p'-p''))\in Pic^0(X_0[1]).$$ For each point $p\in U_{\alpha}$,\  consider the following gluing data:

\[
\xymatrix{
\rho_p: &\text{Fiber of}\  \mathcal{L}(D)\  \text{at}\  p'\ar[r]^-{\sim}&\text{Fiber of}\  \mathcal{L}(D)\  \text{at}\  p''\\
&\mathcal{L}(D)\otimes(\mathcal{O}_{X_0[1],p'}/m_{p'})\ar@{=}[u]\ar[r]^-{\sim}&\mathcal{L}(D)\otimes(\mathcal{O}_{X_0[1],p''}/m_{p''})\ar@{=}[u],
}
\tag{4.9}\label{diag 4.9}
\]
induced by the commutative diagram:
\[
\xymatrix{
&\mathcal{L}(D)\otimes(\mathcal{O}_{X_0[1],p'}/m_{p'})\ar[d]_-{(u_{\alpha})^{n_p}}^{\cong}\ar[r]^-{\overset{\rho_p}{\sim}}&\mathcal{L}(D)\otimes(\mathcal{O}_{X_0[1],p''}/m_{p''})\ar[d]^-{(1/v_{\alpha})^{n_p}}_{\cong}\\
&\mathcal{O}_{X_0[1],p'}/m_{p'}\ar[d]&\mathcal{O}_{X_0[1],p''}/m_{p''}\ar[d]\\
&\mathbb{C}\ar@{=}[r]&\mathbb{C}.\\
}
\tag{4.10}\label{diag 4.10}
\]
Gluing the line bundle $\mathcal{L}(D)$ according to the above gluing data, we get a line bundle in  $Pic^0(X_0)$, which is defined to be the image $\nu_t(D)$.
\end{defin}

Next we want to describe the $\nu_t(D)$
in terms of $\check{\text{C}}$ech double complex. 

We already have the resolution $$1\rightarrow \mathcal{O}^*_{X_0}\overset{a_1^*}{\rightarrow} (a_1)_*\mathcal{O}^*_{X_0[1]}\overset{\theta^*}{\rightarrow} (a_2)_*\underset{p\in X_0[2]}{\bigoplus}\mathbb{C}^*_{p}\rightarrow 1$$ of $\mathcal{O}^*_{X_0}$.\  The $\check{\text{C}}$ech double complex of $(a_1)_*\mathcal{O}^*_{X_0[1]}\overset{\theta^*}{\rightarrow} (a_2)_*\underset{p\in X_0[2]}{\bigoplus}\mathbb{C}^*_{p}$ is
\[
\xymatrix{
&1 \\
&C^0(\mathcal{U}, (a_2)_*\underset{p\in X_0[2]}{\bigoplus}\mathbb{C}^*_{p})\ar[u]\ar[r]&1\\
(0,0)\ar@{~>}[r]&C^0(\mathcal{U}, (a_1)_*\mathcal{O}^*_{X_0[1]})\ar[u]^-{\theta^*}\ar[r]^-{\delta}&C^1(\mathcal{U}, (a_1)_*\mathcal{O}^*_{X_0[1]})\ar[u]^-{\theta^*}\ar[r]^-{\delta} &\cdots\\
&1\ar[u]&1\ar[u] \\
}
\tag{4.11}\label{diag 4.11}
\]

Denote the above $\check{\text{C}}$ech double complex by $E^{\bullet, \bullet}$ and its total complex by $E^{\bullet}$. Then we have the canonical isomorphisms\[\check{\text{H}}^1(E^{\bullet})\cong \mathbb{H}^1((a_1)_*\mathcal{O}^*_{X_0[1]}\overset{\theta^*}{\rightarrow} (a_2)_*\underset{p\in X_0[2]}{\bigoplus}\mathbb{C}^*_{p})\cong H^1(X_0, \mathcal{O}^*_{X_0})\cong Pic(X_0),\]
so we get a canonical morphism $$\Psi:  C^0(\mathcal{U}, (a_2)_*\underset{p\in X_0[2]}{\bigoplus}\mathbb{C}^*_{p})\bigoplus  Z^1(\mathcal{U}, (a_1)_*\mathcal{O}^*_{X_0[1]})\rightarrow Pic(X_0),$$ where $Z^1(\mathcal{U}, (a_1)_*\mathcal{O}^*_{X_0[1]})$ is the group of 1-cocycles. It is also clear that $$C^0(\mathcal{U}, (a_2)_*\underset{p\in X_0[2]}{\bigoplus}\mathbb{C}^*_{p})=\underset{p\in X_0[2]}{\bigoplus}\mathbb{C}^*_{p}.$$ We want to find a representative of $\nu_t(D)$ in $(\underset{p\in X_0[2]}{\bigoplus}\mathbb{C}^*_{p})\bigoplus  Z^1(\mathcal{U}, (a_1)_*\mathcal{O}^*_{X_0[1]})$, for $D=\underset{p\in X_0[2]}{\Sigma} n_p p\in L$.

\begin{note}\label{4.3.3}
For any node $p\in U_{\alpha}$, where $U_{\alpha}\in\mathcal{U}$, we have local coordinate functions $u_{\alpha}, v_{\alpha}$ near $p'$ and $p''$ as is discussed previously. We denote the function $w^{\alpha}$ over $a_1^{-1}U_{\alpha}$ to be $u_{\alpha}$ over the connected component containing $p'$, and $1/v_{\alpha}$ over the other component containing $p''$.  If $U_{\beta}$ does not contain nodal point, we denote $w^{\beta}=1$.
\end{note}

\begin{thm}\label{4.3.4}
For any $D=\underset{p\in X_0[2]}{\Sigma}n_pp\in L$, let
\begin{equation*}
    n_{\alpha}=
    \begin{cases}
  n_p & \text{for}\ p\in U_{\alpha}\\
  1 &  \text{for} \ U_{\alpha}\  \text{containing no nodal point}
  \end{cases}
\end{equation*}
Then $$((1,...,1), \{{\frac{(\omega^{\alpha})^{n_\alpha}}{(\omega^{\beta})^{n_{\beta}}}}|_{U_{\alpha\beta}}\})\in (\underset{p\in X_0[2]}{\bigoplus}\mathbb{C}^*_{p})\bigoplus  Z^1(\mathcal{U}, (a_1)_*\mathcal{O}^*_{X_0[1]})$$ represents $\nu_t(D)$ in $Pic^0(X_0)$, where $U_{\alpha\beta}=U_{\alpha}\cap U_{\beta}$
\end{thm}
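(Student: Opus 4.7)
The plan is to construct the line bundle $\nu_t(D)$ from explicit local trivializations of its pullback $\mathcal{L}(D) = \mathcal{O}_{X_0[1]}(\sum_{p\in X_0[2]} n_p(p'-p''))$ on the cover $\{a_1^{-1}(U_\alpha)\}$ of $X_0[1]$, together with the gluing prescribed in Definition \ref{4.3.2}, and then to read off the \v{C}ech cocycle representing it in the total complex of diagram (\ref{diag 4.11}). I would first recall how any line bundle $L$ on $X_0$ determines a $1$-cocycle in $E^{\bullet}$: pick, for each $\alpha \in I$, a trivializing section $e_\alpha$ of $a_1^*L$ on $a_1^{-1}(U_\alpha)$ (on a node-containing chart this is a pair of nowhere-vanishing sections, one on each branch); the transition functions $g_{\alpha\beta}$ defined by $e_\beta = g_{\alpha\beta}\cdot e_\alpha$ on the node-free overlap $U_{\alpha\beta}$ form the $E^{1,0}$-part, while the $E^{0,1}$-part is the collection $\{h_p\}$, where $h_p\in \mathbb{C}^*$ is determined by the canonical identification $L_p = (a_1^*L)_{p'} = (a_1^*L)_{p''}$ sending $e_\alpha|_{p'}$ to $h_p \cdot e_\alpha|_{p''}$ (for $p\in U_\alpha$).

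Next, I would apply this prescription to $L = \nu_t(D)$. Let $s_D$ denote the canonical meromorphic section of $\mathcal{L}(D)$ corresponding to the constant meromorphic function $1$, so that $\mathrm{div}(s_D) = \sum_p n_p(p'-p'')$. With $w^\alpha$ and $n_\alpha$ as in Notation \ref{4.3.3}, I take
\[
e_\alpha := \frac{s_D}{(w^\alpha)^{n_\alpha}};
\]
explicitly, $e_\alpha = 1/u_\alpha^{n_p}$ on the $p'$-branch and $e_\alpha = v_\alpha^{n_p}$ on the $p''$-branch when $U_\alpha$ contains the node $p$, and $e_\alpha = 1$ when $U_\alpha$ contains no node. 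The factor $(w^\alpha)^{n_\alpha}$ is designed precisely to cancel the zero and the pole of $s_D$ at $p'$ and $p''$, so $e_\alpha$ is nowhere vanishing on each connected component of $a_1^{-1}(U_\alpha)$. Computing $g_{\alpha\beta} = e_\beta/e_\alpha$ on the node-free overlap $U_{\alpha\beta}$, the $s_D$-factors cancel and yield
\[
g_{\alpha\beta} = \frac{(w^\alpha)^{n_\alpha}}{(w^\beta)^{n_\beta}}\bigg|_{U_{\alpha\beta}},
\]
which is the $E^{1,0}$-component of the cocycle asserted in the theorem.

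It remains to verify $h_p = 1$ at every node $p$. By Definition \ref{4.3.2}, the gluing $\rho_p:\mathcal{L}(D)_{p'}/m_{p'}\to \mathcal{L}(D)_{p''}/m_{p''}$ factors through $\mathbb{C}=\mathbb{C}$ via ``multiplication by $u_\alpha^{n_p}$'' and the inverse of ``multiplication by $(1/v_\alpha)^{n_p}$''. I would interpret ``multiplication by $u_\alpha^{n_p}$'' as the fiberwise contraction with $u_\alpha^{n_p}$, viewed as a local generator of $\mathcal{L}(D)^{-1} = \mathcal{O}(-D)$ near $p'$, under the canonical pairing $\mathcal{L}(D)\otimes \mathcal{L}(D)^{-1}\to \mathcal{O}$: under this pairing $e_\alpha|_{p'} = [1/u_\alpha^{n_p}]$ pairs with $[u_\alpha^{n_p}]$ to give $(1/u_\alpha^{n_p})\cdot u_\alpha^{n_p} = 1\in \mathbb{C}$, and symmetrically $e_\alpha|_{p''} = [v_\alpha^{n_p}]$ pairs with $[v_\alpha^{-n_p}]$ to give $v_\alpha^{n_p}\cdot v_\alpha^{-n_p} = 1\in \mathbb{C}$. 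Hence $\rho_p$ identifies $e_\alpha|_{p'}$ with $e_\alpha|_{p''}$, i.e.\ $h_p=1$. The only real difficulty is this last conceptual identification of the maps labelled $u_\alpha^{n_p}$ and $(1/v_\alpha)^{n_p}$ in diagram (\ref{diag 4.10}) as the pairings with local generators of $\mathcal{L}(D)^{-1}$ at $p'$ and $p''$; once this is unwound, the fact that our $e_\alpha = s_D/(w^\alpha)^{n_\alpha}$ is precisely the trivialization pinned to $1$ on both branches is immediate and the theorem follows.
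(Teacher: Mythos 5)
Your proposal is correct and follows essentially the same route as the paper: both identify the class in the total complex of (\ref{diag 4.11}) by exhibiting $\{(w^{\alpha})^{n_\alpha}/(w^{\beta})^{n_{\beta}}\}$ as transition data for $\nu_t(D)$ and checking that the node component is $(1,\dots,1)$ because the gluing in diagram (\ref{diag 4.10}) is pinned to the identity on $\mathbb{C}$. Your explicit choice of trivializations $e_\alpha = s_D/(w^\alpha)^{n_\alpha}$ simply spells out the step the paper compresses into ``it is clear that the representative gives the line bundle $\nu_t(D)$.''
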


\begin{proof} Note that the quasi-isomorphism of complexes
\[
\xymatrix{
1\ar[r]&\mathcal{O}^*_{X_0}\ar[r]\ar[d]_-{a_1^*}&1\ar[d]\\
1\ar[r]&(a_1)_*\mathcal{O}^*_{X_0[1]}\ar[r]^-{\theta^*}&(a_2)_*\underset{p\in X_0[2]}{\bigoplus}\mathbb{C}^*_{p}\ar[r] &1\\
}
\]
gives the morphism of two $\check{\text{C}}$ech double complexes as below:
\[
\xymatrix{
&1\ar[r]&1\\
&C^0(\mathcal{U}, (a_1)_*\mathcal{O}^*_{X_0})\ar@{=>}[d]\ar[u]^-{\theta^*}\ar[r]^-{\delta}&C^1(\mathcal{U}, (a_1)_*\mathcal{O}^*_{X_0})\ar@{=>}[d]\ar[u]^-{\theta^*}\ar[r]^-{\delta} &\cdots\\
&C^0(\mathcal{U}, (a_2)_*\underset{p\in X_0[2]}{\bigoplus}\mathbb{C}^*_{p})\ar[r]&1\\
&C^0(\mathcal{U}, (a_1)_*\mathcal{O}^*_{X_0[1]})\ar[u]^-{\theta^*}\ar[r]^-{\delta}&C^1(\mathcal{U}, (a_1)_*\mathcal{O}^*_{X_0[1]})\ar[u]^-{\theta^*}\ar[r]^-{\delta} &\cdots\\
}
\]
The $\check{\text{C}}$ech 1-cocycle $$\{{\frac{(\omega^{\alpha})^{n_\alpha}}{(\omega^{\beta})^{n_{\beta}}}}|_{U_{\alpha\beta}}\}\in Z^1(\mathcal{U}, (a_1)_*\mathcal{O}^*_{X_0})$$ maps to the representative $((1,...,1), \{{\frac{(\omega^{\alpha})^{n_\alpha}}{(\omega^{\beta})^{n_{\beta}}}}|_{U_{\alpha\beta}}\})$ through the above diagram. Also, the equality in the bottom row of the diagram (\ref{diag 4.10}) corresponds to $(1,...,1)$, and it is clear that the representative $$\{{\frac{(\omega^{\alpha})^{n_\alpha}}{(\omega^{\beta})^{n_{\beta}}}}|_{U_{\alpha\beta}}\}\in Z^1(\mathcal{U}, (a_1)_*\mathcal{O}^*_{X_0})$$ gives the line bundle $\nu_t(D)\in Pic^0(X_0)$. Thus $$((1,...,1), \{{\frac{(\omega^{\alpha})^{n_\alpha}}{(\omega^{\beta})^{n_{\beta}}}}|_{U_{\alpha\beta}}\})\in (\underset{p\in X_0[2]}{\bigoplus}\mathbb{C}^*_{p})\bigoplus  Z^1(\mathcal{U}, (a_1)_*\mathcal{O}^*_{X_0[1]})$$ represents $\nu_t(D)$.
\end{proof}

\subsection{Geometric description of 1-motives $\RN{3}$} \label{geo 3}

Recall that our ultimate goal is to describe the geometric meaning of the morphism $\mu_t: L\rightarrow Pic^0(X_0)$ upto isogeny. In this subsection, we will prove that the 1-motive $\mu_t$ coincides with $\nu_t$ up to isogeny, where $\nu_t$ is defined in subsection \ref{geo 2}. In order to achieve this goal, let's consider the rational structure $(A_{\mathbb{Q}}^{\bullet}, M_{\bullet})$ given in section \ref{double complex}.

We have double complexes $A_{\mathbb{Q}}^{\bullet, \bullet}$, $M_0A_{\mathbb{Q}}^{\bullet, \bullet}$, $Gr_1^MA_{\mathbb{Q}}^{\bullet, \bullet}$ as follows:

$(1)\ A_{\mathbb{Q}}^{\bullet, \bullet}=M_1A_{\mathbb{Q}}^{\bullet, \bullet}=$

\[
\xymatrix{
0\\
\frac{\wedge^2\mathcal{M}}{\wedge^2\textbf{e}(\mathcal{O}_{X_0})}(1)\ar[u]\ar[r]&0\\
\frac{\mathcal{O}_{X_0}\otimes\mathcal{M}}{\mathcal{O}_{X_0}\otimes\textbf{e}(\mathcal{O}_{X_0})}(1)\ar[u]^d\ar[r]^-{\wedge 2\pi\sqrt[]{-1}\tilde{t}}&\frac{\wedge^2\mathcal{M}}{\textbf{e}(\mathcal{O}_{X_0})\wedge\mathcal{M}}(2)\ar[u]\ar[r] &0
}
\tag{4.12}\label{diag 4.12}
\]

$(2)\ M_0A_{\mathbb{Q}}^{\bullet, \bullet}=$
\[
\xymatrix{
0\\
\frac{\textbf{e}(\mathcal{O}_{X_0})\wedge\mathcal{M}}{\wedge^2\textbf{e}(\mathcal{O}_{X_0})}(1)\ar[u]\ar[r]&0\\
\frac{\mathcal{O}_{X_0}\otimes\mathcal{M}}{\mathcal{O}_{X_0}\otimes\textbf{e}(\mathcal{O}_{X_0})}(1)\ar[u]^d\ar[r]^-{\wedge 2\pi\sqrt[]{-1}\tilde{t}}&\frac{\wedge^2\mathcal{M}}{\textbf{e}(\mathcal{O}_{X_0})\wedge\mathcal{M}}(2)\ar[u]\ar[r] &0
}
\tag{4.13}\label{diag 4.13}
\]

$(3)\ Gr_1^MA_{\mathbb{Q}}^{\bullet, \bullet}=$

\[
\xymatrix{
0\\
\frac{\wedge^2\mathcal{M}}{\textbf{e}(\mathcal{O}_{X_0})\wedge\mathcal{M}}(1)\ar[u]\ar[r]^-{\theta}&0\\
0\ar[u]^d\ar[r]&0\ar[u]\\
}
\tag{4.14}\label{diag 4.14}
\]

\begin{lemma}\label{4.4.1}
$$\frac{\wedge^2\mathcal{M}}{\textbf{e}(\mathcal{O}_{X_0})\wedge\mathcal{M}}\cong(a_2)_*\mathbb{Q}_{X_0[2]}.$$


\end{lemma}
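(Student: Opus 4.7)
The plan is to reduce the statement to a pointwise computation of exterior squares after using the identification $\mathcal{M}/\textbf{e}(\mathcal{O}_{X_0})\cong(a_1)_*\mathbb{Q}_{X_0[1]}$ provided by the four-term exact sequence recalled earlier in the paper.

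First I would note that, because $\mathcal{M}=\mathcal{M}_{X_0}^{gp}\otimes\mathbb{Q}$ is a sheaf of $\mathbb{Q}$-vector spaces and $\textbf{e}(\mathcal{O}_{X_0})$ is a $\mathbb{Q}$-subspace of $\mathcal{M}$, the exterior powers in the statement can be taken as $\mathbb{Q}$-linear exterior powers, so everything is flat. From the exact sequence
\[
0\longrightarrow \mathbb{Q}_{X_0}\longrightarrow \mathcal{O}_{X_0}\stackrel{\textbf{e}}{\longrightarrow}\mathcal{M}\longrightarrow (a_1)_*\mathbb{Q}_{X_0[1]}\longrightarrow 0
\]
we get a short exact sequence of $\mathbb{Q}$-vector space sheaves
\[
0\longrightarrow \textbf{e}(\mathcal{O}_{X_0})\longrightarrow \mathcal{M}\longrightarrow (a_1)_*\mathbb{Q}_{X_0[1]}\longrightarrow 0.
\]

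Next I would apply the standard fact that for any short exact sequence $0\to A\to B\to C\to 0$ of $\mathbb{Q}$-vector space sheaves (hence stalkwise split), the natural map
\[
\wedge^2 B/(A\wedge B)\stackrel{\sim}{\longrightarrow}\wedge^2 C
\]
is an isomorphism, coming from the filtration on $\wedge^2 B$ with graded pieces $\wedge^2 A$, $A\otimes C$, $\wedge^2 C$. Applied to our sequence this gives
\[
\frac{\wedge^2\mathcal{M}}{\textbf{e}(\mathcal{O}_{X_0})\wedge\mathcal{M}}\;\cong\;\wedge^2\bigl((a_1)_*\mathbb{Q}_{X_0[1]}\bigr).
\]

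Finally, I would compute the right-hand side stalkwise. At a smooth point $x\in X_0$ the stalk $\bigl((a_1)_*\mathbb{Q}_{X_0[1]}\bigr)_x$ is one-dimensional, so its second exterior power vanishes. At a node $p\in X_0[2]$ the fiber $a_1^{-1}(p)=\{p',p''\}$ consists of two points, so the stalk is $\mathbb{Q}\oplus\mathbb{Q}$ and its second exterior power is canonically $\mathbb{Q}$. Hence $\wedge^2\bigl((a_1)_*\mathbb{Q}_{X_0[1]}\bigr)$ is a skyscraper sheaf on the nodal set $X_0[2]$ with stalk $\mathbb{Q}$, i.e.\ $(a_2)_*\mathbb{Q}_{X_0[2]}$, which is the desired identification. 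I do not expect any serious obstacle; the only minor point to check carefully is that the exterior-square/quotient compatibility is used with $\mathbb{Q}$ as the base ring, which is legitimate because $\textbf{e}(\mathcal{O}_{X_0})\subset\mathcal{M}$ is a sub-$\mathbb{Q}$-vector space.
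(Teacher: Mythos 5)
Your proof is correct and, at bottom, performs the same two-step reduction as the paper: first identify $\wedge^2\mathcal{M}/(\textbf{e}(\mathcal{O}_{X_0})\wedge\mathcal{M})$ with $\wedge^2$ of the cokernel $(a_1)_*\mathbb{Q}_{X_0[1]}$, then compute that exterior square stalkwise as a skyscraper on the nodes. The difference lies in how the first step is justified. The paper recognizes the quotient as the graded piece $Gr_2^{W(\textbf{e}(\mathcal{O}_{X_0}))}Kos^2(\textbf{e})^2$ of the weight filtration on the Koszul complex and cites Proposition \ref{1.2.4}, which in this bidegree specializes to precisely the isomorphism $\wedge^2 B/(A\wedge B)\cong\wedge^2 C$ that you establish directly from the filtration $\wedge^2A\subset A\wedge B\subset\wedge^2B$ (in fact this particular quotient only needs right-exactness of $\wedge^2$, so even stalkwise splitness is more than you need). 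Your route is more elementary and self-contained; the paper's keeps the computation inside the $Kos$/$W(G)$ formalism it has already set up and reuses elsewhere (e.g.\ in Theorem \ref{2.2.1} and Lemma \ref{4.1.1}), at the price of invoking a general lemma where only a one-line special case is required. Your care about working $\mathbb{Q}$-linearly matches Remark \ref{1.2.5}. One cosmetic caveat: the identification $\wedge^2(\mathbb{Q}_{p'}\oplus\mathbb{Q}_{p''})\cong\mathbb{Q}$ at a node is canonical only after ordering the two branches, which is why the paper fixes the convention of Notation \ref{4.2.1}; this affects signs but not the existence of the isomorphism.
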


\begin{proof} (1) Consider the morphism of sheaves of free abelian groups $\textbf{e}: \mathcal{O}_{X_0}\rightarrow \mathcal{M}$. Denote $\textbf{f}: \mathcal{O}_{X_0}\rightarrow \textbf{e}(\mathcal{O}_{X_0})$. \  Note that $$\frac{\wedge^2\mathcal{M}}{\textbf{e}(\mathcal{O}_{X_0})\wedge\mathcal{M}}=Gr_2^{W(\textbf{e}(\mathcal{O}_{X_0}))}Kos^2(\textbf{e})^2.$$ By proposition \ref{1.2.4}, we have \begin{equation*}
    \begin{aligned}
  &  Gr_2^{W(\textbf{e}(\mathcal{O}_{X_0}))}Kos^2(\textbf{e})^2\\
    \cong & Kos^0(\textbf{f})^2[-2]\otimes \overset{2}{\bigwedge}(\coker \textbf{e})\\
    \cong & \overset{2}{\bigwedge}((a_1)_*\mathbb{Q}_{X_0[1]})\\
    \cong &(a_2)_*\mathbb{Q}_{X_0[2]}.
    \end{aligned}
    \end{equation*}
\end{proof}

Consider the diagram (\ref{diag 4.3})
\[
\xymatrix{
&H^1(\mathcal{O}_{X_0})\ar[r]^-{exp}&Pic^0(X_0)\otimes\mathbb{Q}\\
&\mathbb{H}^1(A_{\mathbb{Q}}^{\bullet})\ar[r]^-{pr}\ar[u]^{\varphi}&L\otimes\mathbb{Q}\ar[u]_{\mu_t}\ar@{=}[r]& Gr_2^W\mathbb{H}^1(A_{\mathbb{Q}}^{\bullet}).\\
}
\]

We want to work on the level of complexes, so we extend the above square to be the following commutative diagram:
\[
\xymatrix{
H^1(\mathcal{O}_{X_0})\ar[r]^-{exp}&Pic^0(X_0)\otimes\mathbb{Q}\ar@{^{(}->}[r]^{inclusion}&Pic(X_0)\otimes\mathbb{Q}\\
\mathbb{H}^1(A_{\mathbb{Q}}^{\bullet})\ar[r]^-{pr}\ar[u]^-{\varphi}& Gr_2^W\mathbb{H}^1(A_{\mathbb{Q}}^{\bullet})\ar[u]_{\mu_t}\ar@{^{(}->}[r]^{inclusion}&\mathbb{H}^1(Gr_1^MA_{\mathbb{Q}}^{\bullet})\\
&\ker\{H_0(X_0[2], \mathbb{Z})\rightarrow H_0(X_0[1], \mathbb{Z})\}\otimes \mathbb{Q}\ar@{=}[u]^{\cong}\ar@{^{(}->}[r]&H_0(X_0[2], \mathbb{Z})\otimes \mathbb{Q}\ar@{=}[u]^{\cong}.\\
}
\tag{4.15}\label{diag 4.15}
\]

To compute $\mathbb{H}^1(A_{\mathbb{Q}}^{\bullet}), \ \mathbb{H}^1(Gr_1^MA_{\mathbb{Q}}^{\bullet}), \ H^1(\mathcal{O}_{X_0})$, \ and $Pic(X_0)\otimes\mathbb{Q}$, we use  $\check{\text{C}}$ech cohomology. Taking in to account lemma \ref{4.4.1} and the choice of the covering $\mathcal{U}$ in notation \ref{4.3.1}, we have that in the  $\check{\text{C}}$ech complex of $\frac{\wedge^2\mathcal{M}}{\textbf{e}(\mathcal{O}_{X_0})\wedge\mathcal{M}}$, $$C^i(\mathcal{U}, \frac{\wedge^2\mathcal{M}}{\textbf{e}(\mathcal{O}_{X_0})\wedge\mathcal{M}})=0,$$ for $i>0$, since $\frac{\wedge^2\mathcal{M}}{\textbf{e}(\mathcal{O}_{X_0})\wedge\mathcal{M}}$ is a skyscraper sheaf by lemma \ref{4.4.1}. Thus we have the following $\check{\text{C}}$ech double complexes of $A_{\mathbb{Q}}^{\bullet}$, $Gr_1^MA_{\mathbb{Q}}^{\bullet}$, and  $(a_1)_*\mathcal{O}^*_{X_0[1]}\overset{\theta^*}{\rightarrow} (a_2)_*\underset{p\in X_0[2]}{\bigoplus}\mathbb{C}^*_{p}$ which is denoted to be $C^{\bullet, \bullet}, D^{\bullet, \bullet}$, and $E^{\bullet, \bullet}$, respectively. \\

(1)\ $C^{\bullet, \bullet}$
\[
\xymatrix{
0&0\\
C^0(\mathcal{U}, \frac{\wedge^2\mathcal{M}}{\wedge^2\textbf{e}(\mathcal{O}_{X_0})}(1)\bigoplus\frac{\wedge^2\mathcal{M}}{\textbf{e}(\mathcal{O}_{X_0})\wedge\mathcal{M}}(2))\ar[u]\ar[r]^-{\delta}&C^1(\mathcal{U}, \frac{\wedge^2\mathcal{M}}{\wedge^2\textbf{e}(\mathcal{O}_{X_0})}(1))\ar[u]\ar[r]^-{\delta} &\cdots\\
C^0(\mathcal{U}, \frac{\mathcal{O}_{X_0}\otimes\mathcal{M}}{\mathcal{O}_{X_0}\otimes\textbf{e}(\mathcal{O}_{X_0})}(1))\ar[u]^D\ar[r]^-{\delta}&C^1(\mathcal{U}, \frac{\mathcal{O}_{X_0}\otimes\mathcal{M}}{\mathcal{O}_{X_0}\otimes\textbf{e}(\mathcal{O}_{X_0})}(1))\ar[u]^D\ar[r]^-{\delta} &\cdots\\
}
\tag{4.16}\label{diag 4.16}
\]
where $D=(d, \wedge2\pi\sqrt[]{-1}\tilde{t})$

(2)\ $D^{\bullet, \bullet}$
\[
\xymatrix{
0\\
C^0(\mathcal{U}, \frac{\wedge^2\mathcal{M}}{\textbf{e}(\mathcal{O}_{X_0})\wedge\mathcal{M}}(1))\ar[u]\ar[r]&0\\
0\ar[u]\ar[r]&0\ar[u]\\
}
\tag{4.17}\label{diag 4.17}
\]\\

(3)\ $E^{\bullet, \bullet}$

\[
\xymatrix{
1 \\
C^0(\mathcal{U}, (a_2)_*\underset{p\in X_0[2]}{\bigoplus}\mathbb{C}^*_{p})\ar[u]\ar[r]&1\\
C^0(\mathcal{U}, (a_1)_*\mathcal{O}^*_{X_0[1]})\ar[u]^-{\theta^*}\ar[r]^-{\delta}&C^1(\mathcal{U}, (a_1)_*\mathcal{O}^*_{X_0[1]})\ar[u]\ar[r]^-{\delta} &\cdots\\
}
\tag{4.18}\label{diag 4.18}
\]

Denote their total complex to be $C^{\bullet}, D^{\bullet}$, and $E^{\bullet}$, respectively. Also, we have morphisms $\Psi_1: C^{\bullet, \bullet} \rightarrow D^{\bullet, \bullet}$ induced by natural projection,\ and  $\Psi_2: C^{\bullet, \bullet} \rightarrow E^{\bullet, \bullet}$ induced by canonical morphisms $$A_{\mathbb{Q}}^{\bullet, \bullet}\rightarrow [(a_1)_*\mathcal{O}_{X_0[1]}\overset{\theta}{\rightarrow}  (a_2)_*\underset{p\in X_0[2]}{\bigoplus}\mathbb{C}_{p}]\otimes_{\mathbb{Z}}\mathbb{Q}\overset{exp}{\rightarrow} [(a_1)_*\mathcal{O}^*_{X_0[1]}\overset{\theta^*}{\rightarrow} (a_2)_*\underset{p\in X_0[2]}{\bigoplus}\mathbb{C}^*_{p}]\otimes_{\mathbb{Z}}\mathbb{Q},$$ which is described in  \RN{1}) section \ref{abs}.\\

The following theorem is our main theorem.

\begin{thm}\label{4.4.2}
The abstract 1-motive $\mu_t$ associated to the $\Q$-limit mixed Hodge structure  $\mathbb{H}^1(A^{\bullet}$) coincides with the geometric 1-motive $\nu_t$ up to isogeny, where $\nu_t$ constructed in section \ref{geo 2}.
\end{thm}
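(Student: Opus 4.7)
The plan is to exhibit, for each $D=\sum_{p\in X_0[2]}n_p p\in L$, an explicit $1$-cocycle $\xi_D$ in the total $\check{\text{C}}$ech complex of $C^{\bullet,\bullet}$ that under $\Psi_1$ represents $D\in Gr_2^W\mathbb{H}^1(A_\Q^\bullet)\hookrightarrow \mathbb{H}^1(Gr_1^MA_\Q^\bullet)$, and then to compute $\Psi_2(\xi_D)$ in $\mathbb{H}^1(E^\bullet)\cong Pic(X_0)\otimes\Q$ and recognize it as the $\check{\text{C}}$ech representative of $\nu_t(D)$ given by Theorem~\ref{4.3.4}. This reduces the identification of $\mu_t$ with $\nu_t$ (up to isogeny) to an explicit matching of $\check{\text{C}}$ech $1$-cocycles in $E^\bullet$.

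I would define the lift as follows, using the local coordinates $u_\alpha,v_\alpha$ attached to each $U_\alpha$ containing a node $p$: take $\alpha=0\in C^0(\mathcal{U},A_\Q^{1,0})$; put $\beta_{U_\alpha}=n_p\cdot 2\pi\sqrt{-1}\,(u_\alpha\wedge v_\alpha)$ when $U_\alpha\ni p$ and zero otherwise; and on each overlap $U_{\alpha\beta}$ set
$$\gamma_{\alpha\beta}=\begin{cases} n_p\cdot 2\pi\sqrt{-1}\,\log u_\alpha\otimes v_\alpha,& U_{\alpha\beta}\subset X_{0,i},\\ -n_p\cdot 2\pi\sqrt{-1}\,\log v_\alpha\otimes u_\alpha,& U_{\alpha\beta}\subset X_{0,j},\end{cases}$$
with the obvious superposition when $U_\beta$ meets another node. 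I would then verify the three cocycle conditions $\delta(\gamma)=0$, $d''(\gamma)+\delta(\beta)=0$, and $\delta(\alpha)+d'(\gamma)=0$. The third is automatic because a local computation shows that $A^{1,0}\cong\Omega_X^2(\log X_0)/(\Omega_X^1\wedge\Omega_X^1(\log X_0))$ is a skyscraper sheaf supported at the nodes of $X_0$, and by Notation~\ref{4.3.1} the overlaps $U_{\alpha\beta}$ contain no nodes, so $C^1(\mathcal{U},A^{1,0})=0$; the middle relation follows from the Koszul formula $d''(\log u\otimes v)=u\wedge v$; the first is a direct check on triple overlaps from the explicit formulas.

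Next I would apply $\Psi_1$ and $\Psi_2$ to $\xi_D$. Under $\Psi_1$ the $\alpha$ and $\gamma$ pieces die on $Gr_1^M A_\Q^\bullet$, while $\beta$ descends to $\sum n_p\cdot 2\pi\sqrt{-1}\,[u_\alpha\wedge v_\alpha]\in \mathbb{H}^1(Gr_1^MA_\Q^\bullet)\cong H^0(X_0[2],\Q(1))$, which by Lemma~\ref{4.4.1} is exactly the class of $D$ in $L\otimes\Q$. Under $\Psi_2$, which factors through the Poincar\'e residue from Subsection~\ref{abs} and then through $\exp$, the $1$-cochain $\gamma$ maps to $n_p\log u_\alpha|_{X_{0,i}}$ on the $p'$-branch and $-n_p\log v_\alpha|_{X_{0,j}}$ on the $p''$-branch in $C^1(\mathcal{U},(a_1)_*\mathcal{O}_{X_0[1]})$, and then exponentiates to the $1$-cocycle with entries $u_\alpha^{n_p}|_{X_{0,i}}$ and $(1/v_\alpha)^{n_p}|_{X_{0,j}}$ in $C^1(\mathcal{U},(a_1)_*\mathcal{O}^*_{X_0[1]})$ — exactly the cocycle $\{(\omega^\alpha)^{n_\alpha}/(\omega^\beta)^{n_\beta}\}$ of Theorem~\ref{4.3.4} in the notation of \ref{4.3.3}. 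The iterated residue of $\beta$ at the nodes produces a $C^0$-term that, after $\exp$, is a $\theta^*$-coboundary in $E^\bullet$, so it is cohomologous to the trivial node entries $(1,\ldots,1)$ appearing in Theorem~\ref{4.3.4}. Hence $\Psi_2(\xi_D)$ and the Theorem~\ref{4.3.4} representative determine the same class in $Pic(X_0)\otimes\Q$, proving $\mu_t=\nu_t$ up to isogeny.

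The main obstacle will be keeping the choices of $\gamma$ coherent across all the overlap types — especially when both $U_\alpha$ and $U_\beta$ meet distinct nodes, where two logarithmic terms must be superposed, or when the overlap sits on different branches — and carefully tracking signs and Tate twists as one passes through the filtered quasi-isomorphism $\phi$ of Theorem~\ref{2.2.1}, the Poincar\'e residue, and the exponential.
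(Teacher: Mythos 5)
Your proposal is correct and follows essentially the same route as the paper: both construct an explicit \v{C}ech $1$-cocycle in the total complex of $C^{\bullet,\bullet}$ from the local coordinates $u_\alpha, v_\alpha$ at the nodes (a $C^0$-term $n_p(u_\alpha\wedge v_\alpha)$, which is exactly the paper's $\overline{G^\alpha\wedge\tilde t}$ since $u_\alpha^{n_p}\wedge\tilde t=n_p(u_\alpha\wedge v_\alpha)$, plus a logarithmic $C^1$-term agreeing with the paper's $(\log G^\alpha_\beta-\log G^\beta_\alpha)\otimes\tilde t$ modulo $\mathcal{O}_{X_0}\otimes\mathbf{e}(\mathcal{O}_{X_0})$), verify the cocycle conditions using the skyscraper property of $\wedge^2\mathcal{M}/\mathbf{e}(\mathcal{O}_{X_0})\wedge\mathcal{M}$ and the choice of covering, and then match the images under $\Psi_1$ and $\Psi_2$ with $D$ and with the representative of $\nu_t(D)$ from Theorem \ref{4.3.4}. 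The only deviations are harmless: you drop the $C^0(\mathcal{U},A_{\mathbb{Q}}^{1,0})$ component (which is invisible to both $\Psi_1$ and $Gr_F^0$, and whose cocycle condition is vacuous) and you identify the node entries with $(1,\dots,1)$ only up to coboundary where the paper gets them on the nose from $n_p\in\mathbb{Z}$.
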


\begin{proof}
Take any element $D=\underset{p\in X_0[2]}{\Sigma}n_pp\in L$. Consider the line bundle $$\mathcal{L}(D):=\mathcal{O}_{X_0[1]}(\underset{p\in X_0[2]}{\Sigma} n_p (p'-p''))\in Pic^0(X_0[1]),$$ where $a_1^{-1}(p)=\{p', p''\}$. 
We take local functions  $(\omega^{\alpha})^{n_\alpha}$ on $a_1^{-1}(U_{\alpha})$ of $\mathcal{L}(D)$ for each $\alpha$ as in theorem \ref{4.3.4}, where $w^{\alpha}$ is defined in notation \ref{4.3.3}.  Then we have the 1-cocycle$$\{{\frac{(\omega^{\alpha})^{n_\alpha}}{(\omega^{\beta})^{n_{\beta}}}}|_{U_{\alpha\beta}}\}\in Z^1(\mathcal{U}, (a_1)_*\mathcal{O}^*_{X_0[1]})$$ which represents the line bundle $\mathcal{L}(D)$,  where $U_{\alpha\beta}=U_{\alpha}\cap U_{\beta}$. Also, by theorem \ref{4.3.4}, we have the 1-cocycle $$((1,1,...,1), \{{\frac{(\omega^{\alpha})^{n_\alpha}}{(\omega^{\beta})^{n_{\beta}}}}|_{U_{\alpha\beta}}\})\in (\underset{p\in X_0[2]}{\bigoplus}\mathbb{C}^*_{p})\bigoplus  Z^1(\mathcal{U}, (a_1)_*\mathcal{O}^*_{X_0[1]})$$ represents $\nu_t(D)$.  For simplicity, we denote $(\omega^{\alpha})^{n_\alpha}$ by $g^{\alpha}$.

Next we want to check that through the morphism $\Psi_2$ from $C^{\bullet,\bullet}$ to $E^{\bullet,\bullet}$, we can lift the 1-cocycle $((1,1,...,1), \{{\frac{g^{\alpha}}{g^{\beta}}}|_{U_{\alpha\beta}}\})$ to a 1-cocycle in $$C^0(\mathcal{U}, \frac{\wedge^2\mathcal{M}}{\wedge^2\textbf{e}(\mathcal{O}_{X_0})}(1)\bigoplus\frac{\wedge^2\mathcal{M}}{\textbf{e}(\mathcal{O}_{X_0})\wedge\mathcal{M}}(2))\bigoplus  C^1(\mathcal{U}, \frac{\mathcal{O}_{X_0}\otimes\mathcal{M}}{\mathcal{O}_{X_0}\otimes\textbf{e}(\mathcal{O}_{X_0})}(1)).$$

Let $G^{\alpha}_{\beta}$ be a holomorphic extension of $g^{\alpha}|_{U_{\alpha\beta}}$ in $V_{\alpha\beta}$, where $V_{\alpha}$ is introduced in notation \ref{4.3.1} and $V_{\alpha\beta}=V_{\alpha}\cap V_{\beta}$.

(1) When $U_{\alpha}$ contains a node, depending on $\beta$, $G^\alpha_{\beta}$ can be extended to be a holomorphic function $(u_{\alpha})^{n_{\alpha}}$ or a meromorphic function $(1/v_{\alpha})^{n_{\alpha}}$ in $V_{\alpha}$. \ Also note that since $\mathcal{M}$ is groupification of a multiplicative monoid, thus in $\mathcal{M}$ we have $$(u_{\alpha})^{n_{\alpha}}\wedge \tilde{t}=((u_{\alpha})^{n_{\alpha}}/\tilde{t}^{n_{\alpha}})\wedge \tilde{t}=(1/v_{\alpha})^{n_{\alpha}}\wedge\tilde{t}.$$ Thus we have a well-defined extension $G^{\alpha}_{\beta}\wedge\tilde{t}$ over ${V_{\alpha}}$, which is independent of $\beta$. Hence we can denote the extension by $G^{\alpha}\wedge\tilde{t}$.

(2) When $U_{\alpha}$ does not contain any node, $G^{\alpha}_{\beta}$ can be extended to some holomorphic function $G^{\alpha}$ over $V_{\alpha}$. Then we also have a well-defined element $G^{\alpha}\wedge\tilde{t}$ over ${V_{\alpha}}$.

Then we get $\{2\pi\sqrt[]{-1}\overline{G^{\alpha}\wedge \tilde{t}}\}\in C^0(\mathcal{U}, \frac{\wedge^2\mathcal{M}}{\textbf{e}(\mathcal{O}_{X_0})\wedge\mathcal{M}}(1))$, which represents the element $D=\underset{p\in X_0[2]}{\Sigma}n_pp\in L$ through the $\check{\text{C}}$ech double complex $D^{\bullet,\bullet}$.  \ In the  $\check{\text{C}}$ech double complex $C^{\bullet,\bullet}$,

we consider element $$\kappa=(\{2\pi\sqrt[]{-1}\overline{G^{\alpha}\wedge \tilde{t}}\},\ \{(2\pi\sqrt[]{-1})^2\overline{G^{\alpha}\wedge \tilde{t}}\},\ 2\pi\sqrt[]{-1}(\log G^{\alpha}_{\beta}-\log G^{\beta}_{\alpha})\otimes \tilde{t})$$

$$\in C^0(\mathcal{U}, \frac{\wedge^2\mathcal{M}}{\wedge^2\textbf{e}(\mathcal{O}_{X_0})}(1)\bigoplus\frac{\wedge^2\mathcal{M}}{\textbf{e}(\mathcal{O}_{X_0})\wedge\mathcal{M}}(2))\bigoplus  C^1(\mathcal{U}, \frac{\mathcal{O}_{X_0}\otimes\mathcal{M}}{\mathcal{O}_{X_0}\otimes\textbf{e}(\mathcal{O}_{X_0})}(1)).$$
Now we check that it is actually a 1-cocycle. Note first that in the diagram (\ref{diag 4.16}), since $d(\log G^{\alpha}_{\beta}-\log G^{\beta}_{\alpha})=G^{\alpha}_{\beta}-G^{\beta}_{\alpha}$, where $d$ is the differential of the kozul complex. we have 
\begin{equation*}
\begin{aligned}
& \delta(\{2\pi\sqrt[]{-1}\overline{G^{\alpha}\wedge \tilde{t}}\},\ \{(2\pi\sqrt[]{-1})^2\overline{G^{\alpha}\wedge \tilde{t}}\})\\
= & \delta(\{2\pi\sqrt[]{-1}\overline{G^{\alpha}\wedge \tilde{t}}\})\\
= & D(2\pi\sqrt[]{-1}(\log G^{\alpha}_{\beta}-\log G^{\beta}_{\alpha})\otimes \tilde{t})
\end{aligned}
\end{equation*}
in diagram (\ref{diag 4.16}), where the first equality holds because 
$C^i(\mathcal{U}, \frac{\wedge^2\mathcal{M}}{\textbf{e}(\mathcal{O}_{X_0})\wedge\mathcal{M}}(1))=0$, for $i>0$. Also, note that $\delta(2\pi\sqrt[]{-1}(\log G^{\alpha}_{\beta}-\log G^{\beta}_{\alpha})\otimes \tilde{t})=0$.  Thus $\kappa$ is a 1-cocycle.

Through morphism $\Psi_1$, the image of $\kappa$ is $\{2\pi\sqrt[]{-1}G^{\alpha}\wedge \tilde{t}\}$, which represents $D=\underset{p\in X_0[2]}{\Sigma}n_pp\in L$.  Through morphism $\Psi_2$, the image of $\kappa$ is $((1,1,...,1), \{{\frac{g^{\alpha}}{g^{\beta}}}|_{U_{\alpha\beta}}\})$, which represents $\nu_t(D)$ by theorem \ref{4.3.4}. Therefore we proved that $\mu_t$ coincides with $\nu_t$ up to isogeny.
\end{proof}

\begin{rmk}
The above proof can only detemine the 1-motive map $\mu_t$ up to isogeny, since we didn't specify the integral structure for the limit mixed Hodge structure. As mentioned in the \cite{hoff}, it is pretty interesting to find a canonical object that resolves the complex of nearby cycles in the derived category $D^+(X_0, \mathbb{Z})$, which will give the $\mathbb{Z}$-limit mixed Hodge structure from geometry. Then our proof can be applied to find the actual 1-motive corresponding to the $\mathbb{Z}$-limit mixed Hodge structure.

Also, from the above theorem, we can see that the 1-motive $\mu_t$ only depends on the first order deformation of the central fiber $X_0$. 

\end{rmk}

\end{document}